\documentclass[12pt]{amsart}
\usepackage{amscd,amssymb,graphics,lineno}
\usepackage{mathrsfs} 

\textwidth=16cm
\oddsidemargin=0pt
\evensidemargin=0pt

\newtheorem{theorem}{Theorem}[section]
\newtheorem{corollary}[theorem]{Corollary} 
\newtheorem{lemma}[theorem]{Lemma}
\newtheorem{proposition}[theorem]{Proposition}
\theoremstyle{definition}
\newtheorem{definition}[theorem]{Definition}

\newtheorem{problem}{Open problem}
\newtheorem{question}[problem]{Question}

\theoremstyle{remark}

\numberwithin{equation}{section}

\def\norm#1{\left\Vert#1\right\Vert}

\def\R{{\mathbb R}}

\def\Sym{{\mathrm{Sym}\,}}

\def\e{\varepsilon}

\def\Aut{{\mbox{\rm Aut}\,}}

\newcounter{quest}
\stepcounter{quest}

\def\a{\alpha}
\def\d{\delta}
\def\ga{\gamma}
\def\Ga{\Gamma}
\def\k{\kappa}

\def\Si{\Sigma}
\def\stm{\setminus}
\def\o{\omega}
\def\sB{\mathcal B}
\def\sE{\mathcal E}
\def\sF{\mathcal F}
\def\sG{\mathcal G}
\def\sK{\mathcal K}
\def\sN{\mathcal N}
\def\sP{\mathcal P}
\def\sS{\mathcal S}
\def\sT{\mathcal T}
\def\sU{\mathcal U}
\def\sW{\mathcal W}
\def\sbs{\subset}

\def\obr{^{-1}}

\def\fC{\mathfrak C}
\def\fV{\mathfrak V}
\def\fW{\mathfrak W}
\def\rar{\rightarrow}
\def\lar{\leftarrow}

\def\Card{{\mathrm{Card}\,}} 
\def\card#1{\left|\,#1\,\right|}

\def\Arh{Arhan\-gel'\-ski\u\i}

\begin{document}

\title[Sequentially continuous homomorphisms] 
{Real-valued measurable cardinals and sequentially continuous
homomorphisms} 

\dedicatory{Dedicated to the memory of Mitrofan Choban}

%
%
%
%

\author{Vladimir V. Uspenskij}

\address{Department of Mathematics, 321 Morton Hall, Ohio
University, Athens, Ohio 45701, USA}

\email{uspenski@ohio.edu}



\date{August 22, 2021}

\keywords{Variety of groups, sequentially continuous, $f$-sequential space,
refinements of group topologies, locally compact groups}

 \begin{abstract} 
A.~V.~Arkhangel'ski\u\i\ asked in 1981 if the variety $\fV$ of topological
groups
generated by free topological groups on metrizable spaces coincides with the
class of all topological groups. We show that if there exists a real-valued
measurable cardinal then the variety $\fV$ is a proper subclass of the class of
all topological groups. A topological group
$G$ is called $g$-sequential if for any topological group $H$
any sequentially continuous homomorphism
$G\rar H$ is continuous. We introduce the concept of a $g$-sequential
cardinal and prove that a locally compact group is $g$-sequential if and
only if its local weight is not a $g$-sequential cardinal.
The product of a family of non-trivial $g$-sequential topological groups is
$g$-sequential if and only if the cardinal of this family is not
$g$-sequential. Suppose $G$ is either the unitary group of a Hilbert space
or the group of all self-homeomorphisms of a Tikhonov cube. Then $G$ is
$g$-sequential if and only if its weight is not a $g$-sequential
cardinal. Every compact group of Ulam-measurable cardinality
admits a strictly finer countably compact group topology.
 \end{abstract}

\thanks {{\it 2020 Mathematics Subject Classification.}
Primary 22A05; secondary 54C08, 54E35, 22B05, 03E55.
\newline
The author was supported by a Humboldt Research Fellowship}


\maketitle

\section{Introduction}

In 1981 A.V.Arkhangel'ski\u\i\ \cite[p.~171, Remark~f]{A1} asked if
\begin{quote}
(A) {\it 
the family of free topological groups
on metrizable spaces
generates (by means of the operations of taking products, subgroups, and
factor groups) the class of all topological groups.}
\end{quote}

Assuming the existence of real-valued measurable cardinals, we answer this
question in the negative (Theorem~\ref{2.13}). Actually it suffices to assume the
existence of smaller cardinals, which we call {\it g-sequential\/}
(Definition~\ref{1.3}).

\begin{definition} \cite{Mo1, Mo2, Mo3, Mo4}. A class of topological groups is
called a
{\it variety\/} if it is closed under arbitrary products, subgroups, and
topological quotient groups.
\end{definition}

Our results do not depend on whether we assume or not that all topological
groups are Hausdorff. Unless otherwise stated, we assume that all groups are
Hausdorff, but in Section~2 it will be more convenient to consider also
non-Hausdorff group topologies.
Throughout the paper we denote by $\fV$ the
variety of topological groups generated by free topological groups on
metrizable spaces. Thus (A) is the assertion that $\fV$ coincides with the
class of all topological groups. Recall that the
{\it (Markov) free topological group\/} $F(X)$ on a Tikhonov space $X$ is
characterized by the following property: $X$ is a subspace of $F(X)$, and
every continuous mapping $f$ from $X$ to a topological group $G$ extends
uniquely to a continuous homomorphism $\bar f: F(X)\rar G$.

The present paper was inspired by the manuscript \cite{MNPS} by S.~Morris,
P.~Nickolas, V. ~Pestov and S.~Svetlichny, where an approach
was suggested to a positive solution of Arkhangel'ski\u\i's question. We
discuss the ideas of \cite{MNPS} below in Section~4. It follows from the
results of \cite{MNPS} and from our arguments that if there are no sequential
cardinals \cite {ACh}
and if the assertion (F) below is true,  
then Arkhangel'ski\u\i's question has a positive answer.
Moreover, in this case every topological group is a quotient of a
closed subgroup of the free topological group on a metrizable space, as
conjectured in \cite{MNPS}.

\smallskip
(F) {\it If $X\sbs Y$ are Tikhonov spaces and the fine uniformity $\sU_Y$ on $Y$
induces the fine uniformity $\sU_X$ on $X$, then the natural injection $F(X)\to F(Y)$
between free topological groups is a topological embedding.}
\smallskip

This assertion is contained in \cite{Sip, S2}. However, the arguments in \cite{Sip, S2} 
are hard to follow, and some specialists doubt that the proof of (F) is complete. We therefore
prefer a cautious approach and consider the statement 

\smallskip
(M) {\it if there are no $g$-sequential cardinals, then (A) holds}
\smallskip

\noindent
as a conditional result that is true if (F) is.

Another way to prove (M) could be first to establish the following:

\smallskip
(U) {\it For every Tikhonov space $X$ the free topological group $F(X)$ admits a topologically
faithful unitary representation, that is, is isomorphic to a subgroup of the unitary group $U(H)$
of a (non-separable) Hilbert space.}

Here $U(H)$ is equipped with the strong operator topology that it inherits from the product $H^H$.
The conjecture (U) can be viewed as a version of the following problem due to A.\,Kechris:
is every Polish group a quotient of a closed subgroup of the unitary group 
of a separable Hilbert space? Indeed, it was noted in \cite[Proposition~4.2]{U5} that this question
has a positive answer if the free group on the space of irrationals admits a topologically faithful
unitary representation. If (U) is true, then so is (M) (this follows from Theorem~\ref{t:4.11} below).
It was proved in \cite{U5} that for every
Tikhonov space $X$ the free
Abelian group $A(X)$ admits a topologically faithful unitary representation. It follows that if there
are no $g$-sequential cardinals, all Abelian topological groups belong to the variety $\fV$.

An ultrafilter on a set $A$ is {\it $\kappa$-complete\/} if it is closed
under intersections of families of cardinality $<\kappa$.
 Let us say that a cardinal $\Card(A)$ is
{\it Ulam-measurable\/} if there exists
an $\o_1$-complete free ultrafilter on $A$. 
(A {\em free} ultrafilter is the same as a nonprincipal ultrafilter.)
A cardinal is Ulam-measurable
if and only if it greater than or equal to the first measurable cardinal.
As usual \cite[Definition 10.3]{Jech}, an uncountable cardinal $\kappa$
is measurable if there exists
a $\kappa$-complete free ultrafilter on $\kappa$.


The variety $\fV$ consists of quotients of subgroups of products of free
topological groups on metrizable spaces. The following definition,
introduced
in \cite{MNPS}, plays a crucial role in the study of this variety.
\begin{definition}[\cite{MNPS}] 
\label{1.2}
A topological group $G$ is
{\it g-sequential\/} if one of the following three equivalent properties
holds:
\begin{enumerate}
\item for any topological group $H$, any sequentially continuous
homomorphism $f:G\rar H$ is continuous;
\item $G$ admits no strictly finer group topology with the
same convergent sequences;
\item $G$ is isomorphic to a quotient group of the free topological group
of a metrizable space.
\end{enumerate}
\end{definition}

The equivalence of the conditions 1 and 2
is clear, and their equivalence to the condition 3
is Theorem~3.7 in \cite{MNPS} (see also Section~4 below).
It follows that the variety~$\fV$ can be described
as the variety generated by all $g$-sequential groups. Hence the
following assertion~(G) implies that \Arh's question has a positive
answer, that is, the variety~$\fV$ coincides with the class of all
topological groups.

\smallskip
(G) {\it Every topological group is isomorphic to a subgroup of a
$g$-sequential topological group.}
\smallskip

Our main result, Theorem~2.13, implies that (G) is incompatible with the
existence of real-valued measurable cardinals, so one cannot expect to
prove (G) in $ZFC$. It is not clear if (G) is consistent.
Assuming there are no $g$-sequential cardinals, we prove that 
for every Hilbert space $H$ the unitary group $U(H)$ is $g$-sequential, 
and that
the same is true for the group $\Aut I^\tau$ of all self-homeomorphisms of
a Tikhonov cube $I^\tau$ (Theorem~4.11). Every topological group with a
countable base is isomorphic to a subgroup of $\Aut I^\omega$ \cite{U1}, but
it is an open problem if a similar assertion holds for uncountable
cardinals, that is, if every group of weight $\tau$ is isomorphic to a
subgroup of $\Aut I^\tau$. If it is true, then Theorem~4.11 implies that (G)
holds under the assumption that there are no $g$-sequential cardinals.

We say that the {\em local weight} of a topological group  $G$ is $\le \k$ 
if the neutral element of $G$ has a neighborhood of weight $\le \k$.
If $G$ is a locally compact group of non-Ulam-measurable local weight, then
every sequentially continuous homomorphism $f:G\rar H$ to a {\it locally
compact} group $H$ is continuous \cite{V}. It is natural to ask what
locally compact groups are $g$-sequential, in other words, what locally
compact groups do not admit strictly finer group topologies with the same
convergent sequences. Some results in this direction were obtained
in \cite{CR2}, \cite{AJ}. 
We prove that a
locally compact group $G$ is $g$-sequential if and only if its
local weight is not $g$-sequential (Theorem~3.14).

A slight modification of our main construction of refinements of group
topologies yields the following result (Theorem~3.16):
every locally compact group $G$ of non-Ulam-measurable local weight admits
a strictly finer group topology which agrees with the original one on
every set of non-Ulam-measurable cardinality. If $G$ is compact in the original
topology, it follows that in the
new topology it is countably compact (moreover, $\kappa$-compact for every
non-Ulam-measurable cardinal $\kappa$). By a theorem of A.~V.~\Arh\
\cite{A2}, a necessary condition for a compact group $G$ to admit a strictly
finer countably compact group topology is that the weight of $G$ be
an Ulam-measurable cardinal. Theorem~3.16 shows that this condition is also
sufficient. Under the assumption that $G$ is either Abelian or connected
this was proved in \cite{CR2}.

The second part of Theorem~3.16 says that the result of \cite{V} mentioned
above can be reversed: if $G$ is a locally compact group of Ulam-measurable
local weight, then there exists a sequentially continuous discontinuous
homomorphism $f:G\rar H$ of $G$ to a locally compact group $H$.

Theorem~3.14 implies that $g$-sequential cardinals
can be defined as follows:
a cardinal $\tau$ is {\it $g$-sequential\/} if the
compact group $2^\tau$ is not $g$-sequential (here 2 denotes a discrete
group consisting of two points).
We accept another property as the definition.
For a set $E$ let $P(E)$ denote the set of all subsets of $E$.
\begin{definition} 
\label{1.3}
A {\it subadditive measure\/} on a set $E$ is a function
$p: P(E)\rar [0,1]$ such that:
\begin{enumerate}
\item $p(A\cup B)\le p(A)+p(B)$ for every $A,B\in P(E)$ with
$A\cap B=\emptyset$;
\item for any decreasing sequence $A_1\supset A_2\supset\dots\in P(E)$
with $\bigcap A_n=\emptyset$ we have $\lim p(A_n)=0$.
\end{enumerate}
A cardinal $\tau=\Card(E)$ is {\it g-sequential} if there is
a subadditive measure $p$ on $E$ such that
$p(E)=1$ and $p(F)=0$ for all finite subsets $F\sbs E$.
\end{definition}

Every subadditive measure is monotone (that is, $A\sbs B$ implies
$p(A)\le p(B)$), so the assumption that $A$ and $B$ are disjoint can be
omitted in the condition 1.
A subadditive measure $p$ on $E$ such that $p(A\cup B)=p(A)+p(B)$ for every
$A,B\in P(E)$ with $A\cap B=\emptyset$ is a $\sigma$-additive measure in
the usual sense. A cardinal $\Card(E)$ is {\it real-valued
measurable\/} if there is a $\sigma$-additive
measure $p: P(E)\rar [0,1]$ such that
$p(E)=1$ and $p(F)=0$ for all finite subsets $F\sbs E$.
It follows that real-valued measurable cardinals are $g$-sequential.
\begin{definition} A topological space $X$ is {\it f-sequential\/} if every
sequentially continuous real-valued function $f:X\rar \mathbb R$ is continuous.
\end{definition}

This concept was considered in \cite{MNPS} under another name. Recall that
a cardinal $\tau$ is {\it sequential\/} \cite{ACh} if the space
$2^\tau$ is not $f$-sequential. Every $f$-sequential group is
$g$-sequential. Applying this remark to the group $2^\tau$, we see that
every $g$-sequential cardinal is sequential. Another way to prove this is to
note that every subadditive measure $p:P(E)\rar [0,1]$ is sequentially
continuous on $P(E)$, which is identified with the compact space $2^E$
(Proposition~2.3). If $p(E)=1$ and $p(F)=0$ for all finite
subsets $F\sbs E$, then $p$ is not continuous.

Under Martin's Axiom, for any cardinal `Ulam-measurable' is equivalent to
`sequential' and to `real-valued measurable' \cite[theorem 2.2]{ACh}
and hence also to `$g$-sequential'.
It is not clear if it can be proved without additional set-theoretic
assumptions that $g$-sequential cardinals coincide with either real-valued
measurable cardinals or with sequential cardinals.

In Section~2 we use the concept of a subadditive measure to introduce the
main tool for the proof of Theorem~2.13: for any topological space
$(X, \sT)$ we define a canonical refinement $\sT_g$ of the topology $\sT$
such that convergent sequences in $(X, \sT_g)$ remain the same as in
$(X, \sT)$. We then prove Theorem~2.13: if $\Card(E)$ is a $g$-sequential
cardinal, the topological group $\Sym(E)$ of all permutations of $E$ does
not
belong to the variety $\fV$, hence the existence of $g$-sequential cardinals
implies that the variety $\fV$ is proper. In Section~3 we study refinements
of locally compact group topologies and prove Theorems~3.14 and ~3.16
mentioned above. We also prove that
the product of a family of $g$-sequential groups
is $g$-sequential provided that the cardinality of the family is not
$g$-sequential (Theorem~3.2).
In Section~4 we discuss, following \cite{MNPS}, a possible way to prove
\Arh's conjecture~(A) under some additional assumptions. The arguments of
\cite{MNPS} depend on the fact that some products of Banach spaces are
$f$-sequential. It is known that the product of a family $\{X_\a: \a\in A\}$
of separable metric spaces is $f$-sequential if $\Card(A)$ is non-sequential
\cite[Theorem~1.5]{ACh}. We prove (Theorem~4.4) that this remains true for
arbitrary metric (or, more generally, bi-sequential) spaces. We then prove
Theorem~4.11, which answers the question of when the groups $U(H)$ and
$\Aut I^\tau$ are $g$-sequential and reduces the conjecture~(G) to the
problem whether the group $\Aut I^\tau$ is a universal topological group of
weight~$\tau$.

\section
{If there are large cardinals, the variety $\fV$ is proper}

For any topological space $(X,\sT)$ we define in a canonical way a
refinement $\sT_g$ of the topology $\sT$ so that convergent sequences in $X$
are the same for $\sT$ and for $\sT_g$ (Definition~\ref{d:2.1}).

Let $\sS=\{U_\a: \a \in A\}$ be a family of open sets in $X$, and let $p$
be a subadditive measure on the index set $A$ (Definition~1.3). For any
$\e>0$ let $W(\sS,p,\e)$ be the set of all $x\in X$ such that the set
$\{\a\in A: x\notin U_\a\}$ has $p$-measure $<\e$. If $A(\e)$ denotes the
collection of all $B\sbs A$ with $p(A\setminus B)<\e$, then
$$
W(\sS,p,\e)=\bigcup_{B\in A(\e)} \bigcap_{\a\in B} U_\a.
$$
\begin{definition}
\label{d:2.1}
For a topological space $(X,\sT)$ let $\sT_g$ be the topology
generated by the collection $\sB$ of the sets $W(\sS,p,\e)$ for all possible
choices of $\sS$, $p$ and $\e$.  We say that
$\sT_g$ is the {\it g-modification\/} of $\sT$, and denote by $X_g$ the
space $X$ equipped with the topology $\sT_g$.
\end{definition}

It is easy to see that $\sB$ is actually a base for $\sT_g$. A similar
construction was used in \cite{DW}.
\begin{proposition} 
\label{p:2.2}
(a) The operation of $g$-modification preserves
continuous maps: if $f:X\rar Y$ is continuous, then $f:X_g\rar Y_g$ is
also continuous;
(b) the operation of $g$-modification is compatible with subspaces: if
$Y$ is a subspace of $X$, then $Y_g$ is a subspace of $X_g$.
\end{proposition}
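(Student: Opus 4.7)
The plan is to verify both statements by direct unravelling of the definition of the $g$-modification, exploiting the fact that the operators $W(\sS,p,\e)$ behave well under preimages.

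For part (a), I would check continuity of $f:X_g\rar Y_g$ on the base $\sB$ described after Definition~\ref{d:2.1}. Fix a basic open set $W(\sS,p,\e)$ of $Y_g$, where $\sS=\{V_\a:\a\in A\}$ is a family of open subsets of $Y$ and $p$ is a subadditive measure on $A$. The key computation is the set-theoretic identity
$$
\{\a\in A: f(x)\notin V_\a\}=\{\a\in A: x\notin f\obr(V_\a)\},
$$
which holds pointwise for every $x\in X$. Setting $\sS':=\{f\obr(V_\a):\a\in A\}$, which is a family of open sets of $X$ indexed by the same set $A$ thanks to the continuity of $f:X\rar Y$, this identity yields $f\obr(W(\sS,p,\e))=W(\sS',p,\e)$, a member of the base of $\sT_g$. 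Hence $f:X_g\rar Y_g$ is continuous.

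For part (b), one direction follows for free from part (a) applied to the inclusion $i:Y\hookrightarrow X$: this says that $i:Y_g\rar X_g$ is continuous, so the subspace topology on $Y$ induced by $\sT_g$ is coarser than the topology of $Y_g$. For the opposite inclusion I would take an arbitrary basic open set $W(\sT,p,\e)$ of $Y_g$, where $\sT=\{V_\a:\a\in A\}$ consists of open subsets of $Y$. For each $\a\in A$ choose an open set $U_\a$ in $X$ with $U_\a\cap Y=V_\a$, and put $\sS:=\{U_\a:\a\in A\}$. A direct computation, based again on the identity above (applied to the inclusion $i$), shows
$$
W(\sS,p,\e)\cap Y=W(\sT,p,\e),
$$
because for $x\in Y$ we have $x\in U_\a\iff x\in V_\a$. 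Therefore every basic open set of $Y_g$ is the trace on $Y$ of a basic open set of $X_g$, which completes the proof that the two topologies on $Y$ coincide.

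There is essentially no obstacle here: both assertions reduce to the observation that the construction $W(\sS,p,\e)$ is functorial in the family $\sS$ under preimages, with the index set $A$ and the measure $p$ carried along unchanged. The only small point that needs to be noted is that the same subadditive measure $p$ can be reused for the transported family, which is automatic because $p$ is defined on the index set $A$ and not on subsets of $X$ or $Y$.
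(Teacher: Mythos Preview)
Your argument is correct and is precisely the direct verification the paper intends: the paper's own ``proof'' is just the sentence ``The proof is straightforward.'' The one cosmetic point is that in part~(b) you reuse the symbol $\sT$ for a family of open sets, whereas the paper reserves $\sT$ for the topology; renaming it avoids a clash, but nothing mathematical is at stake.
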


The proof is straightforward.\qed

\smallskip
Before we prove that spaces $X$ and $X_g$ have the same convergent
sequences,
let us note that subadditive measures are nothing else as sequentially
continuous seminorms on groups of the form $2^E$. Recall that a non-negative
real-valued function on a group $G$ is a {\it seminorm} if $p(e)=0$ (here
$e$ is the neutral element) and $p(xy^{-1})\le p(x)+p(y)$ for all
$x,y\in G$. Identifying $P(E)$, the set of subsets of $E$, with the compact
group $2^E$, we see that every sequentially continuous seminorm $p:2^E\rar
[0,1]$ is a subadditive measure on $E$. Conversely, let $p$ be a subadditive
measure on $E$. Then $p$ is a seminorm on the compact group $2^E$. To prove
that $p$ is sequentially continuous, it suffices to check that $p$ is
sequentially continuous at the unity, or that $\lim p(A_n)=0$ for
any sequence $\{A_n\}$ of subsets of $E$ which converges to the empty set.
The last assumption means that $\bigcap B_n=\emptyset$, where $B_n=\bigcup_
{k\ge n} A_k$, so the definition of a subadditive measure implies that
$\lim p(B_n)=0$. Since $p(A_n)\le p(B_n)$, it follows that $\lim p(A_n)=0$.
We thus have established
\begin{proposition} 
\label{p:2.3}
A function $p:P(E)\rar [0,1]$ is a subadditive
measure on $E$ in the sense of Definition~1.3 if and only if $p$ is a
sequentially continuous seminorm on the compact group~$2^E$.
\qed\end{proposition}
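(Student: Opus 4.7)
The plan is to prove both directions of the equivalence by translating concretely between the set-theoretic and group-theoretic languages. I identify $P(E)$ with $2^E$ via characteristic functions, so that the group operation is symmetric difference (hence $AB\obr=A\triangle B$) and the neutral element is $\emptyset$; the product topology on $2^E$ then makes a sequence $A_n$ converge to $\emptyset$ precisely when every point of $E$ lies in only finitely many $A_n$.

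Forward direction: given a subadditive measure $p$, I first check $p(\emptyset)=0$ by applying condition~(2) of Definition~\ref{1.3} to the constant empty sequence. For the seminorm inequality $p(AB\obr)\le p(A)+p(B)$, I use the inclusion $A\triangle B\sbs A\cup B$ together with monotonicity of $p$ (noted right after Definition~\ref{1.3}) and condition~(1). To obtain sequential continuity, I invoke the standard reduction: a seminorm on a group is sequentially continuous everywhere as soon as it is sequentially continuous at the identity. So suppose $A_n\to\emptyset$ in $2^E$; each element of $E$ then belongs to only finitely many $A_n$, whence the decreasing envelopes $B_n=\bigcup_{k\ge n}A_k$ satisfy $\bigcap_n B_n=\emptyset$. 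Condition~(2) yields $p(B_n)\to 0$, and since $A_n\sbs B_n$ monotonicity gives $p(A_n)\le p(B_n)\to 0$.

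Converse direction: assume $p$ is a sequentially continuous seminorm on $2^E$ taking values in $[0,1]$. Whenever $A\cap B=\emptyset$ we have $A\cup B=A\triangle B=AB\obr$, so the seminorm inequality delivers condition~(1) immediately. For condition~(2), a decreasing sequence $A_1\supset A_2\supset\dots$ with $\bigcap_n A_n=\emptyset$ converges to $\emptyset$ in $2^E$ (each point of $E$ is eventually omitted), so sequential continuity gives $p(A_n)\to p(\emptyset)=0$.

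The one delicate step is the passage from the pointwise convergence $A_n\to\emptyset$ to a genuinely decreasing exhausting sequence $B_n\downarrow\emptyset$; without this trick one would need a countable form of subadditivity that is not part of Definition~\ref{1.3}. Everything else is bookkeeping between the two formulations.
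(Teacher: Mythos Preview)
Your proof is correct and follows essentially the same approach as the paper: both directions use the identification of $P(E)$ with $2^E$ under symmetric difference, and the key step---reducing sequential continuity of a subadditive measure to condition~(2) via the decreasing envelopes $B_n=\bigcup_{k\ge n}A_k$---is exactly the paper's argument. You simply spell out in more detail the verifications (that $p(\emptyset)=0$, that the seminorm inequality follows from $A\triangle B\sbs A\cup B$ and monotonicity, and that a decreasing sequence with empty intersection converges to $\emptyset$ in $2^E$) which the paper leaves implicit.
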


\begin{proposition} 
\label{p:2.4}
The topology $\sT_g$ is finer than $\sT$.
Convergent sequences in $X$ for the topologies $\sT$ and
$\sT_g$ are the same, so the identity map $X\rar X_g$ is sequentially
continuous.
\end{proposition}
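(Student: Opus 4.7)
The plan is to verify the two assertions in turn, with the second (preservation of convergent sequences) carrying the main content.

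First I would show $\sT \subseteq \sT_g$ by realizing any single open set as a basic element $W(\sS,p,\e)$. Given $U\in\sT$, take the index set $A=\{*\}$ to be a singleton, $\sS=\{U\}$, and let $p$ be the subadditive measure defined by $p(\emptyset)=0$ and $p(A)=1$ (both axioms of Definition~1.3 are trivial here). With $\e=1$ the set $W(\sS,p,1)$ consists of all $x$ with $p(\{\a:x\notin U_\a\})<1$, which is precisely $U$. Hence $\sT\subseteq\sT_g$, and consequently every sequence converging in $\sT_g$ converges in $\sT$.

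For the reverse direction, assume $x_n\to x$ in $(X,\sT)$ and fix a basic $\sT_g$-neighborhood $W=W(\sS,p,\e)$ of $x$, where $\sS=\{U_\a:\a\in A\}$. Let $B=\{\a\in A:x\in U_\a\}$ and put $\d=\e-p(A\setminus B)>0$. For each $n$ set $C_n=\{\a\in A:x_n\in U_\a\}$ and $E_n=B\setminus C_n=\{\a\in B:x_n\notin U_\a\}$. Since $A\setminus C_n\subseteq (A\setminus B)\cup E_n$, monotonicity and subadditivity give
\[
p(A\setminus C_n)\le p(A\setminus B)+p(E_n),
\]
so it suffices to show $p(E_n)<\d$ eventually. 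Now define the decreasing sequence $F_n=\bigcup_{k\ge n}E_k$. If $\a\in\bigcap_n F_n$, then $\a\in B$ (so $x\in U_\a$) and $x_k\notin U_\a$ for infinitely many $k$, contradicting $x_n\to x$ in $\sT$. Therefore $\bigcap_n F_n=\emptyset$, and axiom~(2) of a subadditive measure yields $\lim p(F_n)=0$; since $E_n\subseteq F_n$ we get $\lim p(E_n)=0$. This gives $x_n\in W$ for all sufficiently large $n$, so $x_n\to x$ in $\sT_g$. Combining both directions, $\sT$ and $\sT_g$ have the same convergent sequences, and sequential continuity of the identity $X\to X_g$ follows.

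The only nontrivial step is the reverse convergence, and the key idea I expect to need is exactly this passage from the pointwise statement ``$x_n\to x$ implies $x_n$ is eventually in each $U_\a$ with $\a\in B$'' to a uniform statement about $p(E_n)$. The uniformity is obtained for free from the countable-intersection axiom (2) applied to the decreasing sequence $F_n=\bigcup_{k\ge n}E_k$; this is essentially the sequential continuity of $p$ viewed as a seminorm on $2^E$, which was already established in Proposition~\ref{p:2.3}. No additional obstacles are anticipated.
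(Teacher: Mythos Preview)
Your proof is correct and follows essentially the same route as the paper's: realize each $U\in\sT$ as a basic set $W(\sS,p,\e)$ with an atomic measure, and for the sequence part form a decreasing family of ``bad'' index sets with empty intersection and apply the vanishing condition in Definition~1.3. The only cosmetic difference is that the paper works directly with $A_n=\{\a:\exists k>n,\ x_k\notin U_\a\}$ and invokes the sequential continuity of $p$ (Proposition~\ref{p:2.3}) to get $\lim p(A_n)=p(\bigcap A_n)\le p(B)<\e$, whereas you first split off the fixed part $A\setminus B$ and then apply axiom~(2) to a sequence with empty intersection; both arguments are equivalent.
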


\begin{proof}
If $\sS=\{U_\a: \a \in A\}\sbs \sT$ and $p$ is an atomic measure of full
mass~1 concentrated at a point $\a\in A$, then $W(\sS,p,\e)=U_\a$. It
follows that $\sT_g$ is finer than $\sT$. Suppose that a sequence $\{x_n\}$
converges to $x$ in $(X, \sT)$. We must show that every neighbourhood $W$ of
$x$ of the form $W=W(\sS,p,\e)$, where $\sS=\{U_\a:\a\in A\}$ and $p$ is a
subadditive measure on $A$, contains all but finitely many $x_n$'s. Let
$A_n$ be the set of all $\a\in A$ such that $x_k\notin U_\a$ for some
$k>n$, and let $B$ be the set of all $\a\in A$ such that $x\notin U_\a$.
The sequence of $A_n$'s is decreasing and $\bigcap A_n\sbs B$. Since $x\in
W $, the definition of the set $W(\sS,p,\e)$ implies that
$p(B)<\e$. Since $p$ is sequentially continuous (Proposition~1.2), we have
$\lim p(A_n) = p(\bigcap A_n) \le p(B) <\e$. Pick $N$ so that $p(A_N)<\e$.
Then $x_k\in W$ for every $k>N$.
\end{proof}
\begin{proposition} 
\label{p:2.5}
The operation of $g$-modification preserves
finite products:\newline
$(X\times Y)_g=X_g\times Y_g$ for any spaces $X$ and $Y$.
\end{proposition}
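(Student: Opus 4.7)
The plan is to prove the equality $(X\ti Y)_g = X_g\ti Y_g$ by establishing the two inclusions of topologies. The easy inclusion --- that every open set of $X_g\ti Y_g$ is open in $(X\ti Y)_g$ --- will follow formally from Proposition~\ref{p:2.2}(a): the projections $X\ti Y \rar X$ and $X\ti Y \rar Y$ are continuous, hence remain continuous after $g$-modification, and the universal property of the product topology then makes the identity map $(X\ti Y)_g \rar X_g\ti Y_g$ continuous.

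For the opposite inclusion I would take a basic open set $W = W(\sS,p,\e)$ in $(X\ti Y)_g$, where $\sS=\{W_\a : \a\in A\}$ is a family of open subsets of $X\ti Y$ and $p$ is a subadditive measure on $A$, together with a point $(x,y)\in W$, and produce a neighborhood of $(x,y)$ in $X_g\ti Y_g$ lying inside $W$. Let $B=\{\a\in A : (x,y)\in W_\a\}$; by definition $p(A\stm B)<\e$, so I would pick $\e'>0$ with $p(A\stm B) + 2\e' < \e$. Using the product topology on $X\ti Y$, for each $\a\in B$ I would choose open $U_\a\ni x$ in $X$ and $V_\a\ni y$ in $Y$ with $U_\a\ti V_\a\sbs W_\a$; for $\a\notin B$ I set $U_\a=X$ and $V_\a=Y$ as a harmless default. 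Then the sets $W_X = W(\{U_\a\}_{\a\in A}, p, \e')$ and $W_Y = W(\{V_\a\}_{\a\in A}, p, \e')$ are neighborhoods of $x$ and $y$ in $X_g$ and $Y_g$, respectively, since the corresponding ``bad'' index sets are empty and have $p$-measure zero.

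The heart of the argument is the containment $W_X\ti W_Y\sbs W$. For any $(x',y')\in W_X\ti W_Y$, the set $\{\a\in A : (x',y')\notin W_\a\}$ is contained in $(A\stm B) \cup \{\a : x'\notin U_\a\}\cup\{\a : y'\notin V_\a\}$, because for $\a\in B$ the inclusions $x'\in U_\a$ and $y'\in V_\a$ force $(x',y')\in U_\a\ti V_\a\sbs W_\a$. By subadditivity and monotonicity of $p$ the $p$-measure of this set is strictly less than $p(A\stm B) + 2\e' < \e$, so $(x',y')\in W$, as required.

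I do not anticipate a serious obstacle. The one design decision that makes the argument go through cleanly --- and the point I would flag as delicate --- is that both factor families $\{U_\a\}$ and $\{V_\a\}$ must be indexed by the \emph{same} set $A$ and evaluated against the \emph{same} measure $p$; only then can one absorb the ``bad'' contributions from each coordinate into a single subadditivity inequality. The $\a\notin B$ indices need the default choice $U_\a=X$, $V_\a=Y$ to avoid contaminating the bad sets on either side, but beyond this nothing more than the definitions is used.
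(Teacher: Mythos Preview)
Your proof is correct and follows essentially the same approach as the paper's: use Proposition~\ref{p:2.2}(a) for the easy inclusion, and for the hard inclusion split the $\e$-margin between the two coordinates via subadditivity. The only cosmetic difference is that the paper first restricts the index set to $B=\{\a:(x,y)\in W_\a\}$ and replaces $\e$ by $\e-p(A\stm B)$ before splitting in half, whereas you keep the full index set $A$, pad the unused indices with $U_\a=X$, $V_\a=Y$, and carry the term $p(A\stm B)$ through the final estimate; the resulting inequalities are identical.
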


\begin{proof}
Proposition~2.2(a) implies that the projections $(X\times Y)_g
\rar X_g$ and $(X\times Y)_g\rar Y_g$ are continuous, hence the topology
of $(X\times Y)_g$ is finer than that of $X_g\times Y_g$. To prove the
converse, suppose that $z=(x,y)\in X\times Y$, and let $W=W(\sS,p,\e)$ be a
neighbourhood of $z$ in $(X\times Y)_g$. We must find $U$ open in $X_g$ and
$V$ open in $Y_g$ such that $z\in U\times V\sbs W$. Let $\sS=\{O_\a: \a\in
A\}$. Without loss of generality we may assume that $z\in \bigcap \sS$.
Otherwise let $B$ be the set of $\a\in A$ such that $z\notin O_\a$. Since
$z\in W$, we have $p(B)<\e$. Let $\e'=\e-p(B)$, let $A'=A\setminus B$, and
let $p'$ be the restriction of the subadditive measure $p$ to $A'$. Then for
$\sS'=\{O_\a: \a\in A'\}$ we have $z\in \bigcap \sS'\sbs W(\sS', p', \e')
\sbs W$. So assume that $z\in \bigcap \sS$. For every $\a\in A$ pick
open sets $U_\a\sbs X$ and $V_\a\sbs Y$ so that $z\in U_\a\times V_\a\sbs
O_\a$. Let $\sP_1=\{U_\a: \a\in A\}$ and $\sP_2=\{V_\a: \a\in A\}$. Let
$U=W(\sP_1, p, \e/2)$ and $V=W(\sP_2, p, \e/2)$. Then $U$ is open in $X_g$,
$V$ is open in $Y_g$ and $z\in U\times V\sbs W$.
\end{proof}
\begin{proposition} 
\label{p:2.6} 
The operation of $g$-modification preserves
topological groups: if $X$ is a topological group, then so is $X_g$.
\end{proposition}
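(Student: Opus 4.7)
The plan is to derive Proposition~\ref{p:2.6} as an immediate consequence of the two preceding propositions, treating the $g$-modification as a functor that is compatible with the two ingredients used to define a topological group: continuous maps and finite products. Specifically, suppose $X$ is a topological group with multiplication $m:X\ti X\rar X$ and inversion $\iota:X\rar X$, both continuous with respect to $\sT$. I want to show that the same maps are continuous with respect to $\sT_g$.

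For inversion, Proposition~\ref{p:2.2}(a) applied to $\iota$ gives at once that $\iota:X_g\rar X_g$ is continuous. For multiplication, Proposition~\ref{p:2.2}(a) applied to $m:X\ti X\rar X$ yields continuity of $m:(X\ti X)_g\rar X_g$. By Proposition~\ref{p:2.5}, the domain coincides with $X_g\ti X_g$, so $m:X_g\ti X_g\rar X_g$ is continuous. Together these two facts say exactly that $(X,\sT_g)$ is a topological group.

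There is essentially no obstacle here; all the substantive work has been done in Propositions~\ref{p:2.2} and~\ref{p:2.5}. The only mild point worth flagging in writing is that we are implicitly using the functoriality of $g$-modification on the category of topological spaces, which is precisely what Proposition~\ref{p:2.2}(a) asserts, together with the fact that $g$-modification commutes with binary products (Proposition~\ref{p:2.5}); no further compatibility with the group structure, and in particular no direct manipulation of the basic sets $W(\sS,p,\e)$, is needed.
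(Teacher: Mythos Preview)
Your argument is correct and is exactly the paper's proof: both reduce the claim to continuity of multiplication and inversion for $\sT_g$, and deduce these from Proposition~\ref{p:2.2}(a) together with Proposition~\ref{p:2.5}. Your version just spells out the two steps in slightly more detail.
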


\begin{proof} We must show that the multiplication $X_g\times X_g\rar X_g$
and the inversion $X_g\rar X_g$ are continuous. This follows from
Propositions~2.2(a) and~2.5.\end{proof}

Plainly the same argument can be applied to any other class of topological
algebras.
\begin{definition}
\label{d:2.7}
A topological space $(X, \sT)$ is {\it g-stable} if
$\sT=\sT_g$.
\end{definition}

It follows from Propositions~2.4 and~2.6 that $g$-sequential groups are
$g$-stable. Any subspace of a $g$-stable space is $g$-stable (Proposition~
2.2(b)).
\begin{proposition} 
\label{p:2.8} 
The property of being $g$-stable is preserved
by quotient maps.
\end{proposition}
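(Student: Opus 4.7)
The plan is to reduce the claim directly to Proposition~2.2(a) combined with the defining property of a quotient map. Suppose $f\colon X\to Y$ is a quotient map and $(X,\sT_X)$ is $g$-stable, so that $\sT_X=(\sT_X)_g$. I want to show that $\sT_Y=(\sT_Y)_g$. The inclusion $\sT_Y\sbs(\sT_Y)_g$ is immediate from Proposition~2.4, so the work lies in proving the reverse inclusion $(\sT_Y)_g\sbs\sT_Y$.

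For this, I would first apply Proposition~2.2(a): continuity of $f\colon X\to Y$ upgrades to continuity of $f\colon X_g\to Y_g$. Since $X=X_g$ by hypothesis, this says that $f\colon X\to Y_g$ is continuous, i.e.\ every $V\in(\sT_Y)_g$ satisfies $f\obr(V)\in\sT_X$. Now I invoke the quotient property of $f$: a subset $V\sbs Y$ lies in $\sT_Y$ if and only if $f\obr(V)\in\sT_X$. Combining the two observations, any $V\in(\sT_Y)_g$ belongs to $\sT_Y$, which is exactly what was needed.

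I do not expect a genuine obstacle. The argument is a short diagram chase that threads Proposition~2.2(a) through the universal property of the quotient topology. The one pitfall to watch is terminological: in this paper ``quotient map'' will be used in the topological sense (as in Propositions~2.5 and~2.6), and not merely as a surjective group homomorphism, so one must make sure the hypothesis provides the full quotient topology on $Y$; once that is granted, no further input from the definition of $\sT_g$ itself is needed.
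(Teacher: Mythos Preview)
Your proposal is correct and follows essentially the same route as the paper: use Proposition~2.2(a) together with $X=X_g$ to see that $f\colon X\to Y_g$ is continuous, then invoke the defining property of a quotient map to conclude $Y=Y_g$. The paper's proof is slightly more terse, phrasing the quotient property as ``$Y$ admits no strictly finer topology for which $f$ remains continuous,'' but the content is identical.
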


\begin{proof} Let $f:X\rar Y$ be a quotient map. Then $Y$ admits no strictly
finer topology for which $f$ remains continuous. If $X$ is $g$-stable, that
is $X=X_g$, then $f:X\rar Y_g$ is continuous (Proposition 2.2(a)).
It follows that $Y=Y_g$.\end{proof}

Let $\fC$ be the smallest class of topological spaces which contains all
metrizable spaces (or just a convergent sequence) and is closed under
arbitrary disjoint sums, finite products and quotient maps. M.\,~Hu\v sek
asked if the class $\fC$ contains all topological spaces. This problem is
investigated in \cite{DW}, where it is proved that if there is a real-valued
measurable cardinal, then the answer is no. Since $g$-stable spaces are
plainly preserved by sums, it follows from Propositions~2.5 and~2.8 that
all spaces in $\fC$ are $g$-stable. If there is a $g$-sequential cardinal,
then not every space is $g$-stable (and vice versa). Hence in the result
of Dow and Watson quoted above `real-valued measurable' can be replaced
by `$g$-sequential'.

A subset $A$ of a partially ordered set $(L,\le)$ is {\it directed\/} if
for any $a,b\in A$ there is $c\in A$ with $a\le c$ and $b\le c$.
\begin{definition}
\label{d:2.9}
Let $L_1,\ L_2$ be complete lattices. An increasing map
$h:L_1\rar L_2$ is {\it lattice-continuous} if for any directed subset
$\Ga\sbs L_1$ we have $h(\sup \Ga) = \sup \{h(\ga): \ga\in \Ga\}$.
\end{definition}

For a group $G$ let $L_G$ be set of all (not necessarily Hausdorff) group
topologies on $G$. The set $L_G$ is a complete lattice. For an onto
homomorphism $f:G\rar H$ let $f_*:L_G\rar L_H$ be the map which assigns to
each $\sT\in L_G$ the quotient topology of $\sT$ on $H$.
\begin{proposition} 
\label{p:2.10}
For any onto homomorphism $f:G\rar H$ of groups
the map $f_*:L_G\rar L_H$ is lattice-continuous.
\end{proposition}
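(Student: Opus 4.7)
The plan is to prove the two inequalities $f_*(\sup\Ga) \ge \sup\{f_*(\ga) : \ga\in\Ga\}$ and $f_*(\sup\Ga) \le \sup\{f_*(\ga) : \ga\in\Ga\}$ in $L_H$ separately. The easy direction is monotonicity: if $\sT_1\sbs\sT_2$ are group topologies on $G$, then $V\in f_*(\sT_1)$ means $f\obr(V)\in\sT_1\sbs\sT_2$, so $V\in f_*(\sT_2)$. Applied to each $\ga\le\sup\Ga$ this gives $f_*(\ga)\le f_*(\sup\Ga)$ for every $\ga\in\Ga$, whence $\sup\{f_*(\ga)\}\le f_*(\sup\Ga)$.

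For the reverse inequality I would first give an explicit description of $\sT:=\sup\Ga$ by exploiting directedness. Let $\sN_\ga(e)$ denote the filter of $\ga$-neighborhoods of the identity $e\in G$. Because $\Ga$ is directed, the union $\sN:=\bigcup_{\ga\in\Ga}\sN_\ga(e)$ is already a filter: given $U\in\sN_{\ga_1}(e)$ and $V\in\sN_{\ga_2}(e)$, pick $\ga_3\in\Ga$ with $\ga_1,\ga_2\le\ga_3$, so that $U\cap V\in\sN_{\ga_3}(e)\sbs\sN$. Moreover $\sN$ satisfies the usual axioms for the neighborhood filter of $e$ in a group topology (existence of $V$ with $VV\obr\sbs U$, and invariance $gUg\obr\in\sN$), because each $\sN_\ga(e)$ does and the required witnesses can be found in the same $\sN_\ga(e)$ that produced $U$. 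Hence $\sN$ is the neighborhood filter of $e$ for a group topology on $G$; this topology is finer than each $\ga$ and coarser than any group topology with the same property, so it must be $\sT=\sup\Ga$.

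Finally I would use the standard fact that for any $\sigma\in L_G$ the quotient map $f:(G,\sigma)\to(H,f_*(\sigma))$ onto $H\cong G/\ker f$ is an open homomorphism, so $\{f(U):U\in\sN_\sigma(e_G)\}$ is a neighborhood base of $e_H$ for $f_*(\sigma)$. Applying this to $\sigma=\sT$ and inserting the description of $\sN_\sT(e_G)$ from the previous paragraph yields
$$
\{f(U):U\in\sN_\sT(e_G)\}=\bigcup_{\ga\in\Ga}\{f(U):U\in\sN_\ga(e_G)\}=\bigcup_{\ga\in\Ga}\sN_{f_*(\ga)}(e_H).
$$
By monotonicity of $f_*$ the family $\{f_*(\ga):\ga\in\Ga\}$ is directed in $L_H$, so by the same argument as in the second paragraph (applied now in $H$) this last union is the neighborhood filter of $e_H$ for $\sup\{f_*(\ga):\ga\in\Ga\}$. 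Two group topologies with the same neighborhood filter at the identity coincide, so $f_*(\sup\Ga)=\sup\{f_*(\ga):\ga\in\Ga\}$.

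The main subtlety to watch is that the naive description of the supremum as "the union of neighborhood filters at $e$" is valid only because $\Ga$ is directed; for an arbitrary family this union need not be a filter and one has to pass to finite intersections, at which point the argument above would break. Openness of the group-theoretic quotient $G\to G/\ker f$, which is what lets me pass neighborhoods of $e_G$ to neighborhoods of $e_H$ without loss, is a standard fact and requires no further comment.
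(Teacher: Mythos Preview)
Your proof is correct and follows essentially the same route as the paper's: both use monotonicity of $f_*$ for the easy inequality, then for the reverse inequality use directedness of $\Ga$ to see that any $\sup\Ga$-neighborhood of $e_G$ is already a $\ga$-neighborhood for some $\ga\in\Ga$, and finally use openness of the quotient homomorphism $f:(G,\ga)\to(H,f_*(\ga))$ to push this down to $H$. The paper carries this out element-wise (take one $\sT$-neighborhood $V$ of $e_H$, pull back, find a single $\ga$, push forward), whereas you first package the same idea into an explicit description of the neighborhood filter of $\sup\Ga$; the content is the same.
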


\begin{proof} Let $\Ga\sbs L_G$ be a directed set of group topologies on $G$.
Let $\sT=f_*(\sup \Ga)$ and $\sT'=\sup \{f_*(\sP):\sP\in \Ga\}$. We must
show that $\sT=\sT'$. Since $f_*$ is increasing, it is clear that $\sT'\sbs
\sT$. To prove the reverse inclusion, it suffices to show that every
$\sT$-neigh\-bour\-hood $V$ of the unity $e_H$ in $H$ is a
$\sT'$-neigh\-bour\-hood. Let $U=f^{-1} (V)$. Then $U$ is a neighbourhood
of the unity $e_G$ in $G$ relative to the topology $\sup\Ga$ and
hence, since $\Ga$ is directed, also relative to some $\sP\in \Ga$.
The map $f:(G,\sP)\rar (H,f_*(\sP))$, being a quotient homomorphism of
topological groups, is open. It follows that $V=f(U)$ is a neighbourhood of
$e_H$ relative to $f_*(\sP)$ and hence also relative to $\sT'$.\end{proof}
\begin{proposition} 
\label{p:2.11}
 Let $\fW$ be the class of all topological
groups $(G, \sT)$ with the following property:

The topology $\sT$ can be represented as the least upper bound of a set of
$g$-stable (not necessarily Hausdorff) group topologies on $G$.

\noindent Then $\fW$ is a variety of topological groups containing all
$g$-sequential topological groups.
\end{proposition}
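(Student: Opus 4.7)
The plan is to verify the three variety axioms for $\fW$ (closure under arbitrary products, subgroups, and topological quotients), using the preservation properties of $g$-stability established in Propositions~\ref{p:2.2}, \ref{p:2.5}, \ref{p:2.8}, together with the lattice-continuity of $f_*$ from Proposition~\ref{p:2.10}. That $\fW$ contains every $g$-sequential group is immediate: such a group is $g$-stable (by Propositions~\ref{p:2.4} and~\ref{p:2.6}, as noted after Definition~\ref{d:2.7}), and its topology is trivially the least upper bound of the one-element family consisting of itself.

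The key preliminary lemma I would record is that any finite join of $g$-stable group topologies on a group $G$ is again $g$-stable. For two topologies $\sigma_1,\sigma_2$ this follows from the diagonal map $G\to (G,\sigma_1)\ti(G,\sigma_2)$, which is a topological embedding of $(G,\sigma_1\vee\sigma_2)$ onto the diagonal subgroup: $g$-stability passes to the product by Proposition~\ref{p:2.5} and to the subspace by Proposition~\ref{p:2.2}(b). Induction handles all finite joins.

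Closure under subgroups is then direct: if $\sT=\sup\Ga$ with each $\sigma\in\Ga$ $g$-stable, then for a subgroup $H\le G$ we have $\sT|_H=\sup\{\sigma|_H:\sigma\in\Ga\}$, and each $\sigma|_H$ is $g$-stable by Proposition~\ref{p:2.2}(b). For products of $(G_i,\sT_i)\in\fW$ with $\sT_i=\sup\Ga_i$, on $G=\prod_i G_i$ consider, for each $i$ and each $\sigma\in\Ga_i$, the initial topology $\tau_{i,\sigma}$ on $G$ induced by $\pi_i\colon G\to(G_i,\sigma)$. As a topological space, $(G,\tau_{i,\sigma})$ is the product of $(G_i,\sigma)$ with an indiscrete factor; indiscrete spaces are trivially $g$-stable (the $g$-modification of a topology with only two open sets still has only two open sets), so Proposition~\ref{p:2.5} yields that $\tau_{i,\sigma}$ is $g$-stable. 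A direct comparison of subbases shows $\prod\sT_i=\sup_{i,\sigma}\tau_{i,\sigma}$, placing the product in $\fW$.

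Closure under quotients is the step where the preliminary lemma is essential. Given an onto continuous homomorphism $f\colon(G,\sT)\to(H,\sT')$ with $(G,\sT)\in\fW$ and $\sT=\sup\Ga$, replace $\Ga$ by the set $\hat\Ga$ of all finite joins of elements of $\Ga$. Then $\hat\Ga$ is directed, consists of $g$-stable topologies (by the lemma), and still satisfies $\sup\hat\Ga=\sT$. Proposition~\ref{p:2.10} now gives $\sT'=f_*(\sup\hat\Ga)=\sup f_*(\hat\Ga)$, while each $f_*(\sigma)$ for $\sigma\in\hat\Ga$ is $g$-stable by Proposition~\ref{p:2.8}; hence $(H,\sT')\in\fW$. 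The main obstacle is precisely the finite-join lemma: without it the family $\Ga$ need not be directed and Proposition~\ref{p:2.10} cannot be invoked, so the diagonal-embedding trick is what makes the whole argument go through.
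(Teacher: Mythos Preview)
Your proof is correct and follows essentially the same route as the paper's. The key finite-join lemma via the diagonal embedding into $(G,\sigma_1)\times(G,\sigma_2)$, and its use to make the family directed so that Proposition~\ref{p:2.10} applies for quotients, is exactly the paper's argument; the only difference is that you spell out the product case explicitly (via the pullbacks $\tau_{i,\sigma}$ and the observation that indiscrete spaces are $g$-stable), whereas the paper simply declares closure under products and subgroups to be clear.
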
 

\begin{proof} It is clear that the class $\fW$ is closed under arbitrary
products and subgroups. The remark after Definition~2.7 shows that all
$g$-sequential groups are in $\fW$. It remains to prove that the class $\fW$
is closed under quotients. Note that the least upper bound $\sT=\sup(\sT_1,
\sT_2)$ of two $g$-stable topologies on a set $X$ is $g$-stable. This
follows from Propositions~2.2(b) and~2.5, since the space $(X, \sT)$ is
homeomorphic to a subspace of the product $(X,\sT_1)\times (X, \sT_2)$.
Consequently, a topological group $(G,\sT)$ is in $\fW$ if and only if
the topology $\sT$ can be represented as the least upper bound of a
{\it directed\/} set of $g$-stable group topologies on $G$. In virtue of
Propositions~2.8 and~2.10 the last property is preserved by quotient
homomorphisms.\end{proof}

For a set
$E$ we denote by $\Sym(E)$ the topological group of all permutations of $E$,
equipped with the topology of pointwise convergence ($E$ being considered
as a discrete space). For any $F\sbs E$ let $H_F$ be the group of all
permutations of $E$ which leave fixed every point in $F$. The family
$\{H_F: F\sbs E,\ F\ \text{finite}\}$ is a base at the unity of
$\Sym(E)$.
If $F$ is a singleton $\{x\}$, we write $H_x$ instead of $H_{\{x\}}$.
\begin{proposition} 
\label{2.12}
If $\Card(E)$ is a $g$-sequential cardinal,
the topological group $\Sym(E)$ does not belong to the variety $\fW$
defined in Proposition~2.11.
\end{proposition}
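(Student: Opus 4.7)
The plan is by contradiction. Suppose $\Sym(E)\in\fW$; by the characterization in Proposition~\ref{p:2.11} we may write $\sT=\sup\{\sP_\alpha:\alpha\in A\}$, where $\{\sP_\alpha\}$ is a directed family of (possibly non-Hausdorff) $g$-stable group topologies on $\Sym(E)$, each coarser than $\sT$. Let $p:P(E)\rar[0,1]$ be a subadditive measure witnessing the $g$-sequentiality of $\Card(E)$, so $p(E)=1$ and $p(F)=0$ for every finite $F\sbs E$. For each $x\in E$, since $H_x$ is a $\sT$-neighbourhood of the unit $e$, the directed-sup relation yields some $\alpha(x)\in A$ and a $\sP_{\alpha(x)}$-open neighbourhood $V_x$ of $e$ with $V_x\sbs H_x$; setting $E_\alpha=\{x\in E:V_x\in\sP_\alpha\}$, the sets $E_\alpha$ are monotone in $\alpha$ (since $\alpha\le\beta$ forces $\sP_\alpha\sbs\sP_\beta$) and $\bigcup_\alpha E_\alpha=E$.

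The key step is to produce an index $\alpha^*\in A$ (possibly after enlarging the directed family) with $p(E\stm E_{\alpha^*})<1/4$. Granted such $\alpha^*$, the family $\sS=\{V_x:x\in E_{\alpha^*}\}$ lies in $\sP_{\alpha^*}$, and the $g$-stability of $\sP_{\alpha^*}$ gives that
\[ W:=W(\sS,p|_{E_{\alpha^*}},1/4)=\{g\in\Sym(E):p(\{x\in E_{\alpha^*}:g\notin V_x\})<1/4\} \]
is a $\sP_{\alpha^*}$-open, hence $\sT$-open, neighbourhood of $e$. Using $V_x\sbs H_x$, which yields $\supp g\cap E_{\alpha^*}\sbs\{x\in E_{\alpha^*}:g\notin V_x\}$, for $g\in W$ I compute
\[ p(\supp g)\le p(\supp g\cap E_{\alpha^*})+p(E\stm E_{\alpha^*})<1/4+1/4=1/2, \]
so $W\sbs W_0:=\{g\in\Sym(E):p(\supp g)<1/2\}$, forcing $W_0$ to be a $\sT$-neighbourhood of $e$. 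But every $\sT$-neighbourhood of $e$ contains a basic subgroup $H_F$ with $F$ finite, and $H_F$ contains permutations whose support equals $E\stm F$; the subadditivity estimate $p(E)\le p(F)+p(E\stm F)$ gives $p(E\stm F)\ge 1$, contradicting $H_F\sbs W_0$.

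The main obstacle is the production of $\alpha^*$: the directed set $A$ need not have countable cofinality. I would argue using the sequential continuity of $p$ on $P(E)=2^E$ (Proposition~\ref{p:2.3}) together with the $\sigma$-subadditivity of $p$ on countable disjoint unions (a direct consequence of the continuity axiom in Definition~\ref{1.3}) to extract a countable sub-family $\{\alpha_n\}\sbs A$ with $p(E\stm\bigcup_n E_{\alpha_n})<1/4$. Replacing $\alpha_n$ by the finite suprema $\beta_n=\sup(\alpha_1,\ldots,\alpha_n)$, which remain $g$-stable group topologies by the finite-supremum argument in the proof of Proposition~\ref{p:2.11}, yields an increasing chain; passing to a suitable supremum of this chain---essentially a countable-supremum extension of that argument, justified by Propositions~\ref{p:2.2}(b) and~\ref{p:2.5}---produces the desired $\alpha^*$ in an enlargement of $A$, and this is the most technical point of the proof.
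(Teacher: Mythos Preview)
Your approach has a genuine gap, and more importantly, it misses a much simpler route that the paper exploits.

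\textbf{The gap.} Your ``most technical point'' is not merely technical; it is unjustified. You want to pass to a countable supremum of the $g$-stable group topologies $\sP_{\beta_n}$ and claim the result is still $g$-stable. But the paper only establishes that \emph{finite} suprema of $g$-stable topologies are $g$-stable (via the embedding into a finite product and Proposition~\ref{p:2.5}). Proposition~\ref{p:2.5} is proved only for finite products, and there is no result in the paper (nor any obvious argument) that a countable product of $g$-stable spaces is $g$-stable. Your appeal to Propositions~\ref{p:2.2}(b) and~\ref{p:2.5} does not cover this. Similarly, the preliminary step of extracting a countable subfamily $\{\alpha_n\}$ with $p(E\stm\bigcup_n E_{\alpha_n})<1/4$ from the cover $E=\bigcup_\alpha E_\alpha$ is not justified: sequential continuity of $p$ and countable subadditivity do not by themselves give you an ``inner regularity'' statement of this kind for an arbitrary directed cover.

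\textbf{What you missed.} The paper's argument for part~(b) is one line and avoids all of this. Pick a single point $x\in E$ and a single index $\sP\in\Ga$ such that $H_x$ is a $\sP$-neighbourhood of the identity. Now observe that for every $y\in E$ the subgroup $H_y$ is \emph{conjugate} to $H_x$ in $\Sym(E)$. Since $\sP$ is a group topology, conjugation is a self-homeomorphism, so every $H_y$ is a $\sP$-neighbourhood of the identity, and hence so is every $H_F=\bigcap_{y\in F}H_y$ for finite $F$. This forces $\sT\sbs\sP$, hence $\sT=\sP\in\Ga$. Combined with part~(a) (that $\sT$ itself is not $g$-stable, which you did not address but which is the easier half), this gives the contradiction immediately. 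The conjugacy of the point-stabilizers is the key structural fact about $\Sym(E)$ that makes the directed-supremum obstacle evaporate; your argument treats the $H_x$ as unrelated open sets and thereby creates a difficulty that is not really there.
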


\begin{proof} (a) We first show that the group $G=\Sym(E)$ is not
$g$-stable. Let
$p$ be a subadditive measure on $E$ which witnesses that $\Card(E)$ is
$g$-sequential, that is, $p(E)=1$ and $p(F)=0$ for every finite subset $F
\sbs E$. For every $x\in E$ let $H_x$, as above, be the stability
subgroup at $x$.
Denote the family $\{H_x: x\in E\}$ by $\sS$. The set $W=W(\sS, p, 1)$,
defined as in the beginning of this Section, consists of all $f\in G$ with
the following property: the set of points $x\in E$ which are moved by $f$
has $p$-measure $< 1$. The set $W$ is not a neighbourhood of the unity
in $G$. Indeed, let $F$ be a finite subset of $E$. Pick $f\in G$ so that
$F$ is the set of all $f$-fixed points in $E$. Then $f\notin W$, since $f$
moves every point in $E\setminus F$ and $p(E\setminus F)=1$. Thus $f\in
H_F\setminus W$, so $H_F$ is not a subset of $W$. Since the subgroups
$H_F$ form a base at the unity in $G$, it follows that $W$ is not open
in $G$. On the other hand, $W$ is open in $G_g$ (Definition~2.1).
Thus $G$ is not $g$-stable.

(b) The proof of Proposition~2.11 shows that a topological group is in the
variety $\fW$ if and only if its topology can be written as $\sup \Ga$ for
some directed family $\Ga$ of $g$-stable group topologies. In virtue of the
part (a) of the proof, the topology $\sT$ of $G$ is not $g$-stable. Hence
to prove that $G\notin \fW$ it suffices to show that for any directed
family $\Ga$ of group topologies on $G$ such that $\sT=\sup \Ga$ we have
$\sT\in \Ga$. Pick a point $x\in E$. Since $\Ga$ is directed, there is
a topology $\sP\in \Ga$ such that the subgroup $H_x$ is a
$\sP$-neighbourhood of the identity. Since the subgroup $H_y$ is conjugate
to $H_x$ for every $y\in E$ and since $\sP$ is a group topology, $H_y$ is
a $\sP$-neighbourhood of the identity for every $y\in E$, and the same is
true for every subgroup $H_F=\bigcap\{H_y: y\in F\}$, where $F$ is a finite
subset of $E$. It follows that $\sT=\sP$. \end{proof}

Let $\fW$ be the variety defined in Proposition~2.11. We noted in
Section~1 that the variety $\fV$ is generated by $g$-sequential groups,
hence Proposition~2.11 implies that $\fV\sbs\fW$. It follows from
Proposition~2.12 that
the group $\Sym(E)$ is not in $\fV$ if $\Card(E)$ is $g$-sequential.
We have thus arrived at our main result:
\begin{theorem} 
\label{2.13}
Assume there exist $g$-sequential cardinals. Then
the variety $\fV$ generated by free topological groups of metrizable spaces
is a proper subclass of the class of all topological groups. For example,
if $\Card(E)$ is a $g$-sequential cardinal, the topological group
$\Sym(E)$ does not belong to $\fV$.
\end{theorem}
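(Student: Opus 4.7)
The plan is to deduce the theorem directly by sandwiching the variety $\fV$ inside the auxiliary variety $\fW$ of Proposition~\ref{p:2.11} and then applying Proposition~\ref{2.12}. More precisely, I would prove the ``For example'' clause first, since the first assertion follows at once from it: if $\Card(E)$ is a $g$-sequential cardinal and $\Sym(E)\notin\fV$, then $\fV$ is obviously a proper subclass of the class of all topological groups.

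The first step is to verify the inclusion $\fV\subseteq\fW$. By Definition~\ref{1.2}(3), every $g$-sequential topological group is a topological quotient of the free topological group $F(X)$ on a metrizable space $X$; conversely, every such $F(X)$ is itself $g$-sequential, because $F(X)$ is the quotient of $F(X)$ by the trivial subgroup. Hence $\fV$ is exactly the variety generated by all $g$-sequential groups. Proposition~\ref{p:2.11} asserts that $\fW$ is a variety containing every $g$-sequential topological group, so the minimality of the variety $\fV$ gives $\fV\subseteq\fW$. The second step is simply to invoke Proposition~\ref{2.12}: when $\Card(E)$ is $g$-sequential, $\Sym(E)\notin\fW$, and so $\Sym(E)\notin\fV$.

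At the level of this final deduction there is no real obstacle; all the work has been done in the preceding propositions. The genuinely hard parts, which one would have to confront if trying to prove Theorem~\ref{2.13} from scratch, are hidden in Propositions~\ref{p:2.11} and~\ref{2.12}. In Proposition~\ref{p:2.11} the delicate point is closure of $\fW$ under quotient homomorphisms, which requires reformulating ``sup of a family of $g$-stable topologies'' as ``sup of a \emph{directed} family of $g$-stable topologies'' (using that a finite sup of $g$-stable topologies is $g$-stable, via Propositions~\ref{p:2.2}(b) and~\ref{p:2.5}), and then pushing such a directed sup through a quotient via the lattice-continuity statement of Proposition~\ref{p:2.10}. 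In Proposition~\ref{2.12} the subtle step is to exhibit a concrete neighbourhood of the identity in $\Sym(E)_g$ not open in $\Sym(E)$, namely $W(\sS,p,1)$ for $\sS=\{H_x:x\in E\}$; the key calculation is that a finite-support permutation $f$ with fixed-point set exactly a prescribed finite $F$ moves a set of full $p$-measure $E\setminus F$, so witnesses $H_F\not\subseteq W$, and then to use conjugacy of the stabilizers $H_x$ to show that any directed family of group topologies whose sup equals the pointwise-convergence topology must already contain it.
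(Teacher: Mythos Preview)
Your proposal is correct and follows exactly the paper's own derivation: the paragraph immediately preceding Theorem~\ref{2.13} argues, as you do, that $\fV\subseteq\fW$ because $\fV$ is the variety generated by $g$-sequential groups and $\fW$ is a variety containing them (Proposition~\ref{p:2.11}), and then invokes Proposition~\ref{2.12}. One small slip in your summary of Proposition~\ref{2.12}: the permutation $f$ whose fixed-point set is exactly the finite set $F$ is \emph{not} of finite support---it moves every point of the cofinite set $E\setminus F$, and that is precisely why $p(\{x:f(x)\ne x\})=p(E\setminus F)=1$.
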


Salvador Hernandez 
raised
the following question: is $g$-modification an idempotent operation? In other words,
is it true that $(X_g)_g=X_g$ for every space $X$? Equivalently, is it true that $X_g$
is $g$-stable? The answer turns out to be positive (Proposition~\ref{p:Salva}). 

We need some preparations.
Suppose $B=\bigcup_{\a\in A} B_\a$ is a disjoint union, $p$ is a subadditive measure on $A$,
and for every $\a\in A$ a subadditive measure $q_\a$ on $B_\a$ is given.
Then one can define a subadditive measure
$\mu=\bigvee_pq_\a$ on $B$ as follows: if $E\sbs B$, then
$$
\mu(E)=\inf\{\e>0: p(\{\a\in A: q_\a(E\cap B_\a)\ge \e\})\le\e\}.
$$
Note that $\mu(E)<\e$ implies 
$p(\{\a\in A: q_\a(E\cap B_\a)\ge \e\})<\e$ implies $\mu(E)\le\e$.
Let us check that $\mu$ is a subadditive measure. Suppose $E_1, E_2\sbs B$ and $\mu(E_i)<\e_i$,
$i=1,2$. Let $A_i=\{\a\in A: q_\a(E_i\cap B_\a)\ge \e_i\}$. Then $p(A_i)<\e_i$. If $\a\in A$
is such that $q_\a((E_1\cup E_2)\cap B_\a)\ge\e_1+\e_2$, then $\a\in A_1\cup A_2$, hence
$$
 p(\{\a\in A:q_\a((E_1\cup E_2)\cap B_\a)\ge\e_1+\e_2\})\le p(A_1)+p(A_2)<\e_1+\e_2.
$$
It follows that $\mu(E_1\cup E_2)\le\e_1+\e_2$.

We also have to check that $\mu(E_n)\to 0$ for every decreasing sequence $(E_n)$ of subsets of $B$
such that $\bigcap E_n=\emptyset$. Let $\e>0$ be given. Put 
$$
A_n=\{\a\in A:q_\a(E_n\cap B_\a)\ge\e\}.
$$
The sets $A_n$ decrease and have empty intersection, therefore $p(A_n)<\e$ for $n$ large enough.
For such $n$ we have $\mu(E_n)\le \e$.

Now suppose that $f_\a>0$ is given for every $\a\in A$. Let $\e>0$.
We claim that 
\begin{equation}
\exists \d>0\,\forall E\sbs B\, (\mu(E)<\d 
\implies 
p\{\a: q_\a(E\cap B_\a)\ge f_\a\}<\e).
\label{eq:existsdelta}
\end{equation}
Indeed, let $A_\d=\{\a:f_\a<\d\}$. Then $p(A_\d)\to 0$ as $\d\to 0$. Pick $\d<\e/2$ so that
$p(A_\d)<\e/2$. We claim that $\d$ has the required property.
Let $E\sbs B$ be such that $\mu(E)<\d$. We must show that the set 
$C=\{\a: q_\a(E\cap B_\a)\ge f_\a\}$ has $p$-measure $<\e$.
Let $C_1=C\cap A_\d$ and $C_2=C\stm C_1$. Then $C_2\sbs \{\a:q_\a(E\cap B_\a)\ge\d\}$,
and the latter set has $p$-measure $<\d$ since $\mu(E)<\d$. Thus $C=C_1\cup C_2$ 
is covered by two sets of $p$-measure $<\e/2$. It follows that $p(C)<\e$.

\begin{proposition}
\label{p:Salva}
The $g$-modification is an idempotent operation: $(X_g)_g=X_g$ for every $X$. In other words,
$X_g$ is $g$-stable. 
\end{proposition}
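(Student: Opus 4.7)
By Proposition~\ref{p:2.4}, $(\sT_g)_g \supset \sT_g$ is automatic, so I only need the reverse inclusion. The plan is to show that every basic open set $W(\sS', p, \d)$ of $(X_g)_g$, where $\sS' = \{V_\a : \a \in A\}$ is a family of $\sT_g$-open sets and $p$ is a subadditive measure on $A$, is already open in $X_g$. Fix $x \in W(\sS', p, \d)$; the goal is to produce a basic $\sT_g$-open neighbourhood $W(\sS, \mu, \d_0) \sbs W(\sS', p, \d)$ of $x$, with $\sS$ a family of $\sT$-open sets.

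First I normalize twice, each time imitating the ``shrink the index set'' trick used in the proof of Proposition~\ref{p:2.5}. Restricting $p$ to $A \stm \{\a : x \notin V_\a\}$ and replacing $\d$ by $\d - p(\{\a : x \notin V_\a\}) > 0$ lets me assume $x \in V_\a$ for every $\a \in A$. Since each $V_\a$ is $\sT_g$-open, I may then pick a basic $\sT_g$-neighbourhood $W(\sS_\a, q_\a, \e_\a) \sbs V_\a$ of $x$, where $\sS_\a = \{U_{\a,\ga} : \ga \in B_\a\}$ consists of $\sT$-open sets. Applying the same trick a second time inside each $B_\a$ (passing from $B_\a$ to $\{\ga \in B_\a : x \in U_{\a,\ga}\}$, restricting $q_\a$, and shrinking $\e_\a$ accordingly), I may further assume that $x \in U_{\a,\ga}$ for every $\a \in A$ and every $\ga \in B_\a$.

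Now form the disjoint union $B = \bigsqcup_{\a \in A} B_\a$, the enlarged family $\sS = \{U_{\a,\ga} : \a \in A,\ \ga \in B_\a\}$ of $\sT$-open sets, and the combined subadditive measure $\mu = \bigvee_p q_\a$ on $B$ constructed in the preparations preceding this proposition. Applying the implication~\eqref{eq:existsdelta} to the positive function $\a \mapsto \e_\a$, with the role of $\e$ there played by our $\d$, I obtain $\d_0 > 0$ such that $\mu(E) < \d_0$ forces $p(\{\a \in A : q_\a(E \cap B_\a) \ge \e_\a\}) < \d$ for every $E \sbs B$.

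It remains to check the two inclusions. By the second normalization the set $\{(\a,\ga) \in B : x \notin U_{\a,\ga}\}$ is empty, hence of $\mu$-measure $0 < \d_0$, so $x \in W(\sS, \mu, \d_0)$. Conversely, if $y \in W(\sS, \mu, \d_0)$, then, since $W(\sS_\a, q_\a, \e_\a) \sbs V_\a$, we have $\{\a : y \notin V_\a\} \sbs \{\a : q_\a(\{\ga \in B_\a : y \notin U_{\a,\ga}\}) \ge \e_\a\}$, and by the choice of $\d_0$ the $p$-measure of the right-hand side is less than $\d$; hence $y \in W(\sS', p, \d)$. The preparatory identity~\eqref{eq:existsdelta} does the real technical work, so the main step to get right is the bookkeeping of the two normalizations, which produces exactly the setup where \eqref{eq:existsdelta} applies verbatim.
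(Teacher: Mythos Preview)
Your proof is correct and follows the same route as the paper's: both arguments use the combined subadditive measure $\mu=\bigvee_p q_\a$ on the disjoint union of the inner index sets and invoke \eqref{eq:existsdelta} with the function $\a\mapsto \e_\a$ to produce the required $\sT_g$-basic neighbourhood. Your two explicit normalizations are exactly what the paper compresses into the sentences ``We may assume that each $U_\a$ is a basic open set'' and ``We also may assume that $x_0\in G_\beta$ for all indices $\beta$''; in fact your version is slightly more careful, since the paper's first reduction tacitly presupposes $x_0\in U_\a$ for all $\a$, which is precisely your first normalization.
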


\begin{proof}
Let $x_0\in X$. A basic open set in $(X_g)_g$ that contains $x_0$ has the form $W(\sS,p,\e)$, where 
$\sS=\{U_\a: \a \in A\}$ is a family of open sets in $X_g$, and $p$
is a subadditive measure on the index set $A$. We may assume that each $U_\a$
is a basic open set of the form $U_\a=W(\sG_\a,q_\a,f_\a)$ for some family 
$\sG_\a=\{G_\beta:\beta\in B_\a\}$ of open sets in $X$, where $q_\a$ is a subadditive measure
on the index set $B_\a$ and $f_\a>0$. We also may assume that $x_0\in G_\beta$ for all indices
$\beta$ (discard those $G_\beta$ for which this does not hold).
Consider the subadditive measure $\mu=\bigvee_pq_\a$ on $B=\bigcup B_\a$
constructed above, and pick $\d>0$ as in Eq.~(\ref{eq:existsdelta}). We claim that 
$$
x_0\in W(\sG, \mu, \d)\sbs W(\sS,p,\e),
$$
where $\sG=\{G_\beta:\beta\in B\}$. This will prove that the topology of $X_g$ is finer than
(and hence equal to) the topology of $(X_g)_g$.

We have 
$$
W(\sG, \mu, \d)=\bigcup_E \bigcap_{\beta\in B\stm E} G_\beta,
$$
where $E$ runs over all subsets of $B$ such that $\mu(E)<\d$. 
Pick such a set $E\sbs B$. Put $C= \{\a: q_\a(E\cap B_\a)\ge f_\a\}$.
By the choice of $\d$, we have $p(C)<\e$, and
$$
\bigcap_{\beta\in B\stm E} G_\beta= \bigcap_{\a\in A}\bigcap_{\beta\in B_\a\stm E} G_\beta
\sbs \bigcap_{\a\in A\stm C}\bigcap_{\beta\in B_\a\stm E} G_\beta.
$$
If $\a\in A\stm C$, then $q_\a(E\cap B_\a)< f_\a$, hence 
$\bigcap_{\beta\in B_\a\stm E} G_\beta\sbs W(\sG_\a,q_\a,f_\a)=U_\a$, and we can continue the previous
displayed formula:
$$
\sbs \bigcap_{\a\in A\stm C} U_\a \sbs W(\sS,p,\e)
$$. 
This proves the inclusion $W(\sG, \mu, \d)\sbs W(\sS,p,\e)$.
\end{proof}

\section
{Refinements of locally compact group topologies}
In this Section we prove that a locally compact group $G$ is $g$-sequential
if and only if its local weight is not a $g$-sequential cardinal (Theorem
~3.14). We also prove that the product of a family of non-trivial
$g$-sequential groups is $g$-sequential if and only if the cardinality of
the family is not $g$-sequential (Theorem~3.2 and Proposition~3.3).

If a compact group
$G$ admits a strictly finer countably compact group topology, then the
weight of $G$ is an Ulam-measurable cardinal \cite{A2}. The converse of
this result
of A.~V.~\Arh\ was proved by W.~W.~Comfort and D.~Remus \cite{CR2} under the
assumption that $G$ is either Abelian or connected. We prove that this
assumption is superfluous (Theorem~3.16).
\begin{proposition}
\label{p:3.1}
A topological group $G$ is $g$-sequential if and
only if every sequentially continuous seminorm on $G$ is continuous.
\end{proposition}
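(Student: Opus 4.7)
The plan is to prove both directions, invoking the equivalence $(1)\Leftrightarrow(2)$ of Definition~\ref{1.2}.

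For $(\Leftarrow)$, suppose every sequentially continuous seminorm on $G$ is continuous, and let $f\colon G\rar H$ be a sequentially continuous homomorphism to a topological group $H$. I would use the standard fact that the topology of $H$ is generated by continuous left-invariant pseudometrics: for every neighborhood $U$ of $e_H$ there is such a pseudometric $d$ with $\{y:d(y,e_H)<1\}\sbs U$. Setting $q(y)=d(y,e_H)$ yields a continuous seminorm on $H$ (the symmetry $q(y^{-1})=q(y)$ following from left-invariance and symmetry of $d$, and the subadditivity from the triangle inequality combined with $d(xy,x)=d(y,e_H)$). Then $q\circ f$ is a sequentially continuous seminorm on $G$, hence by hypothesis continuous, so $\{x:q(f(x))<1\}\sbs f^{-1}(U)$ is an open neighborhood of $e_G$. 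This yields continuity of $f$ at $e_G$, hence continuity.

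For $(\Rightarrow)$, suppose $G$ is $g$-sequential and let $p$ be a sequentially continuous seminorm. Put $V_\e=\{x:p(x)<\e\}$, so that $V_\e^{-1}=V_\e$ and $V_{\e/2}V_{\e/2}\sbs V_\e$. I would define a topology $\si$ on $G$ with a base of neighborhoods at $e$ consisting of the sets $\bigcap_{g\in F}gV_\e g^{-1}$ for finite $F\sbs G$ and $\e>0$. A direct check of the group-topology axioms (closure under finite intersections, the square condition, and the conjugation identity $h(\bigcap_{g\in F}gV_\e g^{-1})h^{-1}=\bigcap_{g'\in hF}g'V_\e g'^{-1}$) shows $\si$ is a group topology, and the inequality $|p(y)-p(x)|\le p(yx^{-1})$ shows $p$ is $\si$-continuous. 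Let $\sT'=\sup(\sT,\si)$ in the lattice of group topologies on $G$; then $\sT'\supseteq\sT$ and $p$ is $\sT'$-continuous.

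The main obstacle is to show that $\sT$ and $\sT'$ have the same convergent sequences; once this is established, condition~(2) of Definition~\ref{1.2} forces $\sT'=\sT$, so $\si\sbs\sT$ and $p$ is already $\sT$-continuous. The nontrivial direction is: if $x_n\to x$ in $\sT$, then $x_n\to x$ in $\si$. Fix a finite $F\sbs G$ and $\e>0$; I must show that eventually $g^{-1}(x_nx^{-1})g\in V_\e$ for every $g\in F$. For each fixed $g$, continuity of multiplication in $\sT$ gives $g^{-1}(x_nx^{-1})g\rar e_G$ in $\sT$, so sequential continuity of $p$ with $p(e_G)=0$ yields $p(g^{-1}(x_nx^{-1})g)\rar 0$; since $F$ is finite, the bound holds uniformly in $g\in F$ for large $n$. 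This step relies crucially on the finiteness of $F$, which is precisely why the local base of $\si$ is built from finite intersections of conjugates of $V_\e$ and why sequential continuity of $p$ suffices.
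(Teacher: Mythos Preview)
Your proof is correct and follows essentially the same approach as the paper's: the key construction in the $(\Rightarrow)$ direction---generating a group topology from the conjugates $p_g(x)=p(gxg^{-1})$ (equivalently, from the sets $\bigcap_{g\in F}gV_\e g^{-1}$)---is identical, and the $(\Leftarrow)$ direction rests on the same standard fact that a group topology is determined by its continuous seminorms. The only cosmetic difference is that the paper argues both implications by contrapositive and swaps which of conditions~(1) and~(2) of Definition~\ref{1.2} is invoked in each direction.
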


\begin{proof} The topology of every topological group is defined by the set
of all continuous seminorms. If a topological group $(G, \sT)$ admits a
strictly finer group topology $\sT'$, there is a $\sT'$-continuous seminorm
$p$ on $G$ which is not $\sT$-continuous. If convergent sequences in $G$ are
the same for $\sT$ and $\sT'$, then $p$ is sequentially continuous.
Conversely, suppose $p$ is sequentially continuous but not continuous. Then
the same is true for every seminorm $p_g$ defined by $p_g(x)=p(gxg^{-1})$,
$g,x\in G$. The family of all seminorms of the form $p_g$ is invariant under
inner automorphisms and hence defines a group topology $\sT'$ on $G$. The
identity map $(G, \sT)\rar (G,\sT')$ is sequentially continuous but not
continuous, thus $(G, \sT)$ is not $g$-sequential.\end{proof}

If $\tau$ is a $g$-sequential cardinal, the compact group $2^\tau$ is not
$g$-sequential. This follows from Propositions~2.3 and~3.1. Actually
for such a cardinal
$\tau$ the space $2^\tau$ is not even $g$-stable. To see this, one
can either repeat the part (a) of the proof of Proposition~2.12
or use the result itself:
since the group $\Sym(E)$ is homeomorphic to a subspace of $2^\tau$ and
$\Sym(E)$ is not $g$-stable, it follows that $2^\tau$ is not $g$-stable
neither. We now show
that if $\tau$ is not $g$-sequential, then the group $2^\tau$ is
$g$-sequential. This is a special case of a more general theorem:
\begin{theorem} 
\label{t:3.2}
Let $\{G_\a:\a\in A\}$ be a family of $g$-sequential
topological groups. If the cardinal $\Card(A)$ is not $g$-sequential, then
the product $G=\prod_{\a\in A} G_\a$ is a $g$-sequential topological group.
\end{theorem}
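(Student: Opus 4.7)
The plan is to apply Proposition~\ref{p:3.1}, reducing the theorem to showing that every sequentially continuous seminorm $p$ on $G=\prod_{\a\in A}G_\a$ is continuous. After the standard truncation $p\mapsto\min(p,1)$, which produces a sequentially continuous seminorm with the same continuity behaviour at the identity, I may assume $p$ takes values in $[0,1]$. Suppose for contradiction that $p$ is not continuous at the identity $e\in G$, so there is an $\e_0>0$ such that every neighbourhood of $e$ contains some $y$ with $p(y)\ge\e_0$. For each $B\sbs A$ set $G_B=\{y\in G:y_\a=e_\a\text{ for all }\a\notin B\}$ and define
$$
\mu(B)=\inf\bigl\{\sup\{p(y):y\in U\cap G_B\}:U\text{ a basic open neighbourhood of }e\text{ in }G\bigr\}.
$$
The goal is to verify that $\mu$ is a subadditive measure on $A$ with $\mu(A)\ge\e_0$ and $\mu(F)=0$ for every finite $F\sbs A$; after normalising by $\mu(A)$ this will exhibit a witness that $\Card(A)$ is $g$-sequential, contradicting the hypothesis.

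Monotonicity is immediate from $G_{B_1}\sbs G_{B_2}$. For subadditivity, given $\d>0$, pick basic open neighbourhoods $U_i\ni e$ with $\sup\{p(y):y\in U_i\cap G_{B_i}\}<\mu(B_i)+\d$, and set $U=U_1\cap U_2$. Any $y\in U\cap G_{B_1\cup B_2}$ factors as $y=y_1y_2$, where $y_1$ agrees with $y$ on $B_1$ and equals $e_\a$ elsewhere, while $y_2$ agrees with $y$ on $B_2\stm B_1$ and equals $e_\a$ elsewhere; the two have disjoint supports and hence commute. Because $U$ constrains only finitely many coordinates and $e_\a$ lies in every neighbourhood of itself, both $y_1$ and $y_2$ still belong to $U$, so $y_i\in U\cap G_{B_i}$. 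The seminorm inequality then gives $p(y)\le p(y_1)+p(y_2)<\mu(B_1)+\mu(B_2)+2\d$, and letting $\d\to 0$ yields $\mu(B_1\cup B_2)\le\mu(B_1)+\mu(B_2)$. For the decreasing-sequence axiom, suppose $B_1\supset B_2\supset\dots$ with $\bigcap_n B_n=\emptyset$ but $\mu(B_n)\ge c>0$ for every $n$. Pick $y_n\in G_{B_n}$ with $p(y_n)\ge c/2$. Since $\supp y_n\sbs B_n$ and each $\a\in A$ lies outside $B_n$ for $n$ large, $y_n\to e$ in the product topology of $G$, so sequential continuity of $p$ forces $p(y_n)\to 0$, a contradiction.

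To complete the argument I need $\mu(F)=0$ for finite $F$. This follows from the elementary fact that a finite product of $g$-sequential groups is $g$-sequential: for two factors, a sequentially continuous seminorm restricted to each axis is continuous by Proposition~\ref{p:3.1}, and the inequality $p(x,y)\le p(x,e)+p(e,y)$ produces joint continuity at the identity. Hence $p|_{G_F}$ is continuous and $\mu(F)=0$, while $\mu(A)\ge\e_0>0$ by the assumed failure of continuity. Then $\mu/\mu(A)$ is a $[0,1]$-valued subadditive measure on $A$ assigning $0$ to every finite set and $1$ to $A$, contradicting the hypothesis that $\Card(A)$ is not $g$-sequential. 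The main technical point I expect to stumble over is checking that the disjoint-support factorisation $y=y_1y_2$ keeps both factors inside the prescribed neighbourhood $U$; this is where the product structure of $G$ is doing the real work, and once it is in hand the remaining verifications of the measure axioms become routine.
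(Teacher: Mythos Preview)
Your proposal is correct and follows essentially the same route as the paper: define the oscillation-type function $\mu(B)$ (the paper calls it $q$), verify it is a subadditive measure on $A$ that vanishes on finite sets, and conclude from the non-$g$-sequentiality of $\Card(A)$ that $\mu(A)=0$, hence $p$ is continuous at the identity. The only cosmetic differences are that you frame the argument by contradiction and normalise $\mu$, whereas the paper argues directly, and that your subadditivity check factors $y\in U\cap G_{B_1\cup B_2}$ inside a single basic neighbourhood $U$ while the paper multiplies neighbourhoods $UV$ in $G_{B\cup C}$; both variants exploit the same product-group structure, and the verification you flagged as potentially delicate goes through exactly as you describe.
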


\begin{proof} In virtue of Proposition~3.1 it suffices to show that every
sequentially continuous seminorm $p$ on $G$ is continuous. Clearly we may
assume that $p$ is bounded by $1$. For every $B\sbs A$ let $G_B$ be the group
$\prod_{\a\in B} G_\a$. We identify $G_B$ with the subgroup of $G$
consisting of all $\{g_\a\}$ such that $g_a$ is the unity of $G_\a$ for
each $\a\in A\setminus B$. Let $\sN(G)$ be the set of all neighbourhoods
of unity in $G$. Define a function $q:P(A)\rar \R$ by
$$
q(B)= \limsup_{x\to e, x\in G_B}p(x)=
\inf \{\sup \{p(x): x\in U\}: U\in \sN(G_B)\}.
$$
In other words, $q(B)$ is the oscillation of the restriction of $p$
to $G_B$ at the unity. Let us check that $q$ is a subadditive measure on
$A$. Suppose $B$ and $C$ are disjoint subsets of $A$. For a given $\e>0$
pick $U\in \sN(G_B)$ and $V\in \sN(G_C)$ so that $p(x)<q(B)+\e$ for every
$x\in U$ and $p(y)<q(C)+\e$ for every $y\in V$. Let $W=UV$ (the product in
the group $G$). Then $W\in\sN(G_{B\cup C)}$. If $z\in W$, then $z=xy$ for
some $x\in U$ and $y\in V$, so we have $p(z)\le p(x)+p(y)<q(B)+q(C)+2\e$.
It follows that $q(B\cup C)\le q(B)+q(C)+2\e$ and hence
$q(B\cup C)\le q(B)+q(C)$, since $\e$ was arbitrary. Thus the condition~
1 of Definition~1.3 is verified.
To verify the second condition,
suppose that $A_1\supset A_2\supset\dots$ is a decreasing sequence of
subsets of $A$ with empty intersection. We must show that $\lim q(A_n)=0$.
Let $a_n=\sup\{p(x):x\in G_{A_n}\}$. Any sequence $\{x_n\}$ such that $x_n
\in G_{A_n}$ converges to the unity. Since $p$ is
sequentially continuous, we have $\lim p(x_n)=0$. It follows that $\lim a_n
=0$. Since $q(A_n)\le a_n$, we have $\lim q(A_n)=0$.

Since each $G_\a$ is $g$-sequential, the restriction of $p$ to each $G_\a$
is continuous (Proposition~3.1). It follows that $q(B)=0$ if $B\sbs A$ is a
singleton. Since $\Card(A)$ is not $g$-sequential, every subadditive measure
on $A$ which is zero on singletons is zero on $A$. Hence $q(A)=0$. This
means that $p$ is continuous at the unity and hence everywhere, as required.
\end{proof}

The condition that $\Card(A)$ be not $g$-sequential is also necessary for
the product $G=\prod_{\a\in A} G_\a$ to be $g$-sequential, provided that
all groups $G_\a$ are non-trivial (we call a group {\it trivial\/}
if it consists of one point):
\begin{proposition}
\label{p:3.3}  
Let $\{G_\a:\a\in A\}$ be
a family of non-trivial
topological groups. If the cardinal $\Card(A)$ is $g$-sequential, then
the product $G=\prod_{\a\in A} G_\a$ is not a $g$-sequential topological
group.
\end{proposition}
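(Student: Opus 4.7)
By Proposition~\ref{p:3.1}, it suffices to construct a sequentially continuous seminorm $\sigma$ on $G=\prod_{\a\in A} G_\a$ that fails to be continuous. Let $p\colon P(A)\to[0,1]$ be a subadditive measure witnessing that $\Card(A)$ is $g$-sequential, so $p(A)=1$ while $p(F)=0$ for every finite $F\sbs A$. For each $\a$, since $G_\a$ is a non-trivial (Hausdorff) topological group, Birkhoff--Kakutani provides a continuous left-invariant pseudometric on $G_\a$ separating the identity from some non-identity point; rescaling, I obtain a continuous seminorm $\rho_\a\colon G_\a\to[0,\infty)$ together with a witness $y_\a\in G_\a$ satisfying $\rho_\a(y_\a)\ge 1$.

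With these data I define $\sigma\colon G\to [0,1]$ by
\[
\sigma(x)=\inf\{\e>0:p(\{\a\in A:\rho_\a(x_\a)\ge\e\})\le\e\}.
\]
Since $p(A)\le 1$ and $\rho_\a(e_\a)=0$, we have $\sigma(x)\le 1$ for all $x$ and $\sigma(e)=0$. That $\sigma$ is a seminorm follows from a standard level-set argument: the triangle inequality $\rho_\a(x_\a y_\a^{-1})\le\rho_\a(x_\a)+\rho_\a(y_\a)$ gives the inclusion
\[
\{\a:\rho_\a(x_\a y_\a^{-1})\ge \e_1+\e_2\}\sbs \{\a:\rho_\a(x_\a)\ge\e_1\}\cup\{\a:\rho_\a(y_\a)\ge\e_2\},
\]
and subadditivity of $p$ then yields $\sigma(xy^{-1})\le\sigma(x)+\sigma(y)$.

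I would next verify the two remaining properties. For sequential continuity, if $x^{(n)}\to e$ in $G$, then $\rho_\a(x^{(n)}_\a)\to 0$ for every single $\a\in A$ by pointwise convergence in the product; given $\e>0$, the sets $D_n=\bigcup_{k\ge n}\{\a:\rho_\a(x^{(k)}_\a)\ge\e\}$ decrease with $\bigcap D_n=\emptyset$, so condition~2 of Definition~\ref{1.3} forces $p(D_n)\to 0$, whence $\sigma(x^{(n)})\le\e$ eventually. To prove discontinuity at $e$, I take an arbitrary basic neighbourhood $U_F=\{x:x_\a\in V_\a\text{ for }\a\in F\}$ of $e$ with $F\sbs A$ finite, and construct $y\in U_F$ by setting $y_\a=e_\a$ on $F$ and $y_\a$ equal to the chosen witness off $F$; since $p(F)=0$ forces $p(A\stm F)=1$, the level set $\{\a:\rho_\a(y_\a)\ge\e\}\supseteq A\stm F$ has $p$-measure $1$ for every $\e\le 1$, so $\sigma(y)=1$ even though $F$ and the $V_\a$'s were arbitrary.

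The main obstacle, as I see it, is not any single step but the bookkeeping of packaging $p$ into a seminorm whose three required properties --- seminorm axioms, sequential continuity, and discontinuity --- all check cleanly. The level-set formulation is chosen so that the triangle inequality falls out of the obvious covering together with subadditivity of $p$; and the passage from pointwise convergence on coordinates (which is all that sequential convergence in the product gives) to global decay of $\sigma(x^{(n)})$ is precisely what condition~2 of Definition~\ref{1.3} does, applied to the upper-tail sets $D_n$.
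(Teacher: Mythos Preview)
Your proof is correct. The level-set seminorm you build is a Ky~Fan--type construction, and each of the three verifications goes through as you indicate; in particular, the inequality $p(A\setminus F)=1$ does follow from $1=p(A)\le p(F)+p(A\setminus F)$ together with the standing bound $p\le 1$.

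The paper's proof is different and shorter: it observes that $G$ contains a topological copy of $2^\tau$ (pick two points in each factor), that $2^\tau$ is not $g$-stable when $\tau$ is $g$-sequential, that $g$-stability is hereditary (Proposition~\ref{p:2.2}(b)), and that $g$-sequential groups are $g$-stable (remark after Definition~\ref{d:2.7}). So the paper leverages the $g$-modification machinery already in place and never writes down an explicit seminorm. Your argument, by contrast, is self-contained modulo Proposition~\ref{p:3.1} and makes visible exactly which seminorm obstructs $g$-sequentiality; it is essentially the same packaging used later in the paper (the $\bigvee_p q_\a$ construction before Proposition~\ref{p:Salva}) specialised to a product. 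The paper's route is quicker once the $g$-stable framework is set up; yours would work even without that framework, and mirrors the spirit of the proof of Theorem~\ref{t:3.2}.
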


\begin{proof} Let $\tau= \Card(A)$. We have observed that the space $2^\tau$
is not $g$-stable. The space $G$ contains a topological copy of $2^\tau$ and
hence is not $g$-stable either (Proposition~2.2(b)). The remark after
Definition~2.7 shows that the group $G$ is not $g$-sequential.
\end{proof}

Our next aim is Theorem~3.14, which states that
a locally compact group $G$ is $g$-sequential if and only
if its local weight is not a $g$-sequential cardinal. The
{\it weight\/} $w(X)$ of a space $X$ is defined by $w(X)=\min\{\card{\sB}:
\sB \text{ is a base for $X$}\}$, and the {\it local weight\/} of a
topological group $G$ is the cardinal $\min\{w(U): U\in \sN(G)\}$,
where $\sN(G)$ is the set of neighborhoods of the neutral element. The local
weight of a locally compact group $G$ equals its character
$\chi(G)=\min\{\card{\sB}: \sB \text{ is a base at the unity}\} $.

We shall need a few known facts concerning topology of locally compact groups.
For the reader's convenience, we sketch the proofs.

\begin{proposition}               
\label{3.4}If $G$ is a locally compact group and $\tau=\chi(G)$,
then $G$ contains a topological copy of the cube $2^\tau$.
\end{proposition}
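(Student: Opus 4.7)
The plan is to reduce the claim to a compact subgroup of weight $\tau$ and then invoke the classical fact that every compact group of weight $\tau$ contains a topological copy of $2^\tau$.

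First I would pick a compact symmetric neighbourhood $V$ of $e$; the subgroup $H=\bigcup_n V^n$ is then open and $\si$-compact, so $\chi(H)=\chi(G)=\tau$. Writing $H$ as a countable union of translates of $V$ and then translating a local base at $e$ of cardinality $\tau$, one obtains a base for $H$ of cardinality $\tau$, so $w(H)=\tau$.

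Next I would apply the Kakutani--Kodaira theorem to the $\si$-compact group $H$ to produce a compact normal subgroup $N\sbs H$ with $H/N$ metrizable. Since $N$ is compact the standard estimate $\chi(H)\le\chi(N)\cdot\chi(H/N)$ holds; combined with $\chi(H/N)=\aleph_0$ and $\chi(H)=\tau$ (which we may assume infinite, the finite case being trivial), this forces $\chi(N)=\tau$, and hence $w(N)=\tau$ since a compact group satisfies $w=\chi$.

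The hard part is the last step: embedding $2^\tau$ into the compact group $N$. Here I would represent $N$ as an inverse limit of compact metric groups along a strictly decreasing chain $\{L_\a\}_{\a<\tau}$ of closed normal subgroups with $\bigcap L_\a=\{e\}$, and then perform a transfinite tree construction, at each stage $\a$ selecting a pair of distinct cosets in a newly appearing quotient, to produce a Cantor-type embedding $2^\tau\hookrightarrow N$. Alternatively, one may invoke the Ivanovski\u\i--Kuzminov theorem that every compact group is dyadic together with the Shapirovski\u\i--Efimov theorem that a dyadic compactum of weight $\tau$ contains $2^\tau$ as a subspace. This final step is the only substantive content; the preceding reductions use only standard structure theory of locally compact groups.
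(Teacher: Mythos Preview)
Your argument follows essentially the same route as the paper: reduce to a compact subgroup of weight $\tau$ and then invoke the Ivanovski\u\i--Kuzminov theorem together with Shapirovski\u\i\ and Efimov; your reduction via an open $\sigma$-compact subgroup and Kakutani--Kodaira is just a spelled-out version of the paper's ``let $K$ be a compact $G_\delta$-subgroup, then $\chi(K)=\tau$'', and your tree-construction alternative is exactly the direct proof of Shakhmatov that the paper also cites. One point to tighten in your alternative~(b): the blanket assertion that a dyadic compactum of weight $\tau$ contains $2^\tau$ is false when $\mathrm{cf}(\tau)=\omega$ (Efimov's criterion says $2^\tau$ embeds iff the space is \emph{not} a countable union of closed sets of weight $<\tau$). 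The paper therefore inserts the step $\pi\chi(N)=\chi(N)=\tau$, valid because a topological group is homogeneous, so that Shapirovski\u\i's theorem yields a map of $N$ onto $I^\tau$, and only then applies Efimov's theorem; with that adjustment your approach~(b) coincides with the paper's proof.
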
  

\begin{proof} First assume that $G$ is compact. Then $G$ is dyadic (= an
image of a Cantor cube; see \cite{U3} for an easy proof of this theorem,
due to Ivanovski\u\i\ and Kuzminov, and for its generalizations). The
$\pi$-character of a group equals its character \cite{A1}, so the
$\pi$-character of $G$ equals $\tau$. Shapirovski\u\i's theorem \cite{Shap}
implies that $G$ can be mapped onto the Tikhonov cube $I^\tau$. By a theorem
of Efimov \cite{Ef}, a dyadic compact space $X$ contains a copy of $2^\tau$
if and only if $X$ can be mapped onto $I^\tau$ (or, equivalently, if and
only if $X$ is not the union of countably many closed subspaces of weight
$<\tau$), and the proposition follows. See \cite{Shak} for another proof.

The general case reduces to the case of a compact group.
Indeed, let $K$ be a compact $G_\delta$-subgroup of $G$. Then $\chi(K)
=\tau$, so $K$ contains a copy of $2^\tau$.
\end{proof}
\begin{proposition}
\label{p:3.5} Let $G$ be a locally compact topological group.
If the local weight of $G$ is a $g$-sequential cardinal, the group $G$ is
not $g$-sequential.
\end{proposition}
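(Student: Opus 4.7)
The plan is to combine Proposition~3.4 with the earlier observation (stated just after Proposition~3.1) that $2^\tau$ is not $g$-stable whenever $\tau$ is a $g$-sequential cardinal, and then to lift this failure of $g$-stability from $2^\tau$ to the ambient group $G$.

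More precisely, set $\tau=\chi(G)$ and assume $\tau$ is $g$-sequential. First, I would invoke Proposition~3.4 to embed $2^\tau$ topologically in $G$. Next, recall that since $\tau$ is $g$-sequential, $2^\tau$ is not $g$-stable, so the canonical topology on $(2^\tau)_g$ is strictly finer than the product topology on $2^\tau$. Now apply Proposition~2.2(b): if $Y$ is a subspace of $X$, then $Y_g$ is a subspace of $X_g$. Taking $Y=2^\tau$ and $X=G$, if $G$ were $g$-stable (i.e.\ $G=G_g$) we would conclude that $2^\tau$ inherits the same topology from $G$ as from $G_g$, forcing $2^\tau=(2^\tau)_g$, which contradicts the failure of $g$-stability of $2^\tau$. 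Hence $G$ is not $g$-stable, so $G_g$ is a strictly finer topology than $\sT$.

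Finally, by Proposition~2.6 this refinement $G_g$ is a group topology, and by Proposition~2.4 it has the same convergent sequences as $G$. Thus $G$ admits a strictly finer group topology with the same convergent sequences, so condition (2) of Definition~1.2 fails and $G$ is not $g$-sequential.

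The argument is essentially a bookkeeping assembly of results that have already been proved: Proposition~3.4 provides the copy of $2^\tau$, the remark after Proposition~3.1 provides the crucial failure of $g$-stability, and Propositions~2.2(b), 2.4, 2.6 glue everything together. The only step that might deserve a second look is the application of Proposition~2.2(b), to make sure the embedding of $2^\tau$ into $G$ given by Proposition~3.4 really is a topological embedding (not merely a continuous injection) so that $(2^\tau)_g$ is genuinely the subspace topology induced from $G_g$; but Proposition~3.4 is stated in those terms, so no additional work is needed.
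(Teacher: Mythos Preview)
Your proof is correct and follows essentially the same route as the paper's: the paper's proof of Proposition~3.5 simply invokes Proposition~3.4 and then says ``apply the same argument as in the proof of Proposition~3.3,'' which in turn uses exactly the chain you spell out (Proposition~2.2(b) to transfer non-$g$-stability from $2^\tau$ to $G$, then the remark after Definition~2.7, i.e.\ Propositions~2.4 and~2.6, to conclude $G$ is not $g$-sequential). You have merely unpacked the cross-references.
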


\begin{proof} Let $\tau=\chi(G)$. The group $G$ contains a topological copy
of the cube $2^\tau$ (Proposition~3.4), so we can apply the same argument as in the
proof of Proposition~3.3.
\end{proof}
\begin{definition} 
\label{d:3.6}
We say that a map $f:X\rar Y$ between topological spaces $X$
and $Y$ has the {\it lifting property for convergent sequences\/}, or simply
{\it lifting property\/}, if for any point $x\in X$
and any convergent sequence
$y_1, y_2,\dots \in Y$ with the limit $y=f(x)$
there exists a sequence $x_1, x_2, \dots$ converging to $x$ and
such that $f(x_n)=y_n$ for every $n=1, 2, \dots$.
\end{definition}

It is easy to see that an open map defined on a metrizable space has the
lifting property. More generally, every open map with a metrizable
kernel has this property. We say that a map $f:X\rar Y$ has {\it metrizable
kernel\/} if there exists a map $g:X\rar M$ to a metrizable space $M$ such
that the map $(f,g): X\rar Y\times M$ is a homeomorphic embedding.
\begin{proposition}                
\label{3.7}
Let $G$ be a locally compact group, $H$ its closed
subgroup. Then the quotient map $q:G\rar G/H$ has the lifting property
for convergent sequences.
\end{proposition}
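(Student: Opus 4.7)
My plan is to reduce by left-translation to the case $x=e$ and $y=H$, then use local compactness together with the openness of $q$ to construct the lift.

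First I would invoke homogeneity: since $G$ acts on itself and on $G/H$ by homeomorphic left translations and $q$ is equivariant, replacing $(y_n)$ by $(x\obr y_n)$ reduces the problem to showing that any sequence $y_n\rar H$ in $G/H$ lifts to a sequence $x_n\rar e$ in $G$ with $q(x_n)=y_n$.

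Next I would fix a compact neighborhood $V$ of $e$. Since $q$ is an open map, $q(V)$ is a neighborhood of $H$ in $G/H$, so $y_n\in q(V)$ for all sufficiently large $n$, and for such $n$ the sets $F_n:=V\cap q\obr(y_n)$ are nonempty and compact. The crucial step is to establish an upper semicontinuity of the correspondence $n\mapsto F_n$ relative to $V\cap H$: for every open $U\sbs G$ with $V\cap H\sbs U$, one has $F_n\sbs U$ for all sufficiently large $n$. This is short to verify---$V\stm U$ is compact and disjoint from $H$, so $q(V\stm U)$ is compact in $G/H$ and does not contain $H$; by $y_n\rar H$, eventually $y_n\notin q(V\stm U)$, which is precisely $F_n\sbs U$. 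In particular, any choice $x_n\in F_n$ has all its cluster points in the compact space $V$ lying inside $V\cap H$.

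Finally I would construct the lift using the transitive right action of $H$ on fibers. Starting from any $x_n\in F_n$, compactness of $V$ provides convergent subsequences $x_{n_k}\rar h$ with $h\in V\cap H$; then $x_{n_k}h\obr\rar e$ and $q(x_{n_k}h\obr)=y_{n_k}$ since $h\obr\in H$, giving the lift along a subsequence. The main obstacle is that $G$ need not be first countable at $e$, so a naive diagonal argument does not immediately splice such subsequential lifts into a lift of the full sequence. I would handle this inside the compact Hausdorff space $V$, where a sequence converges to $e$ iff $e$ is its only cluster point: the upper semicontinuity above, combined with iterated right-multiplications by elements of $V\cap H$, suppresses every putative cluster point other than $e$ and yields the required lift of the full sequence.
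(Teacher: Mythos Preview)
Your reduction by left translation and the upper-semicontinuity argument (your first three paragraphs) are correct and clean. The gap is in the last step. You correctly identify the obstacle --- lack of first countability at $e$ --- but the proposed resolution, ``iterated right-multiplications by elements of $V\cap H$,'' is not a proof as written. Right-multiplying a subsequence $x_{n_k}\rar h$ by $h\obr$ kills that one cluster point along that one subsequence, but: other subsequences of $(x_n)$ may still accumulate at $h$; there may be uncountably many cluster points in $V\cap H$; and each modification can introduce new cluster points elsewhere in $V\cap H$. You have not specified what ordinal-length process you intend, what happens at limit stages, or why it terminates with cluster set $\{e\}$. The (true) fact that in compact $V$ a sequence converges to $e$ iff $e$ is its only cluster point does not by itself produce a selection with that property; a mechanism for making such a choice is exactly what is missing, and bare compactness of $V$ together with openness of $q$ does not seem to supply one. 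A Zorn-type argument on cluster sets also runs into trouble, since chains of lifts need not have an upper bound.

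The paper's proof handles precisely this difficulty, but by a different route: after reducing to $\si$-compact $G$, it uses the structure theory of locally compact groups (every neighborhood of the identity contains a compact $G_\delta$ subgroup, \cite[Theorem~8.7]{HR}) to write $q$ as the limit projection of a continuous inverse system $\{G/N_\a\}$ whose successor bonding maps $p_\a^{\a+1}$ have metrizable kernel. Open maps with metrizable kernel do have the lifting property for the elementary reason you mention, and a transfinite recursion along the system then lifts the full sequence, with continuity of the system handling limit stages. In effect the tower of metrizable quotients substitutes for the missing countable neighborhood base at $e$. Your approach would go through if you could reduce to the metrizable-kernel situation, but that reduction is the substantive content of the paper's argument.
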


\begin{proof} 
 An open map $f:X\rar Y$ has the lifting property if $X$ can
be covered by open sets $U$ such that the restriction $f|U:U\rar f(U)$ has
this property. It follows that without loss of generality we may assume that
$G$ is $\sigma$-compact (= the union of countably many compact sets). We
represent the map $q:G\rar G/H$ as a limit
projection of an inverse system.
Let $\sK=\{K_\a: \a<\tau\}$ be the family of all
compact $G_\delta$ subgroups of $G$,
where $\tau=w(G)$. For every $\a<\tau$
let $N_\a=H\cap\bigcap_{\beta<\a}K_{\beta}$ and $X_\a=G/N_\a$. For
$\a<\beta$ a natural quotient map $p_\a^\beta: X_\beta\rar X_\a$ is defined,
so we get an inverse system $S=\{X_\a, p_\a^\beta\}$ with open bonding maps.
Every neighbourhood
of unity in $G$ contains a subgroup $K\in \sK$ \cite[theorem 8.7]{HR}, so
the inverse limit of $S$ can be identified with $G$, and $q$
can be identified with the
projection $\lim S\rar X_0$. The system $S$ is
continuous, in the sense that for every limit $\a$ the space $X_\a$ is the
inverse limit of its predecessors. Every map $p_\a^{\a+1}$ has metrizable
kernel. Indeed, $G/K_\a$ is metrizable, and the natural map
$$
X_{\a+1}=G/(N_\a\cap K_\a) \rar X_\a \times (G/K_\a)
$$
is a homeomorphic embedding. Hence every map $p_\a^{\a+1}$ has the lifting
property for convergent sequences. Given a convergent sequence in $G/H=X_0$,
we can lift it in $X_\a$ by recursion, using the continuity of $S$ at
limit steps. In the end we get a required lifting in $G$.
\end{proof}

Actually this proof shows more: the quotient map $q:G\rar G/H$ is 0-soft.
A map $p:X\rar Y$ is called {\it $n$-soft\/} \cite{Shch}, where $n$ is an
integer, if for any paracompact space $Z$ with $\dim Z\le n$, any closed
subspace $A\sbs Z$ and any commutative diagram of the form
$$
\CD
A @>f>> X\\
@ViVV @VVpV\\
Z @>>g> Y,
\endCD
$$
where $i:A\rar Z$ is the inclusion, there exists a map $h:Z\rar X$ which is
compatible with the diagram in the sense that $hi=f$ and $ph=g$. Clearly
0-softness implies the lifting property for convergent sequences: just take
for $Z$ the one-point compactification of a countable discrete space.
It follows from Michael's selection theorem for 0-dimensional spaces
\cite{Mi1} that any
open map with a completely metrizable kernel is 0-soft. (A map $f:X\rar Y$
has {\it completely metrizable kernel\/} if there exists a complete metric
space $M$ and a map $g:X\rar M$ such that the map $(f,g):X\rar Y\times M$
is a homeomorphic embedding.) The maps $p_\a^{\a+1}$ from the proof of 
Proposition~\ref{3.7} have completely metrizable kernel,
since metrizable locally compact spaces
are completely metrizable. The 0-softness of quotient maps $G\rar G/H$ for
(locally) compact groups immediately implies the Ivanovski\u\i\ --- Kuzminov
theorem mentioned above, which says that compact groups are dyadic (see
\cite{U3}). Indeed, if $X$ is a compact space such that the constant map of
$X$ to a singleton is 0-soft (such spaces are called
{\it Dugundji-compact\/} \cite{P}, \cite{Hay}), then $X$ is dyadic: there
exists an onto map $f:A\rar X$, where $A$ is a closed subspace of a Cantor
cube $Z=2^\tau$, and the assumption on $X$ implies that $f$ has an extension
$h:Z\rar X$.
\begin{proposition}  
\label{3.8}Let $H$ be a closed normal subgroup of a
topological group $G$. Suppose that the quotient map $q:G\rar G/H$ has the
lifting property for convergent sequences (Definition~3.6).
If $H$ and $G/H$ are $g$-sequential, then $G$ is also $g$-sequential.
\end{proposition}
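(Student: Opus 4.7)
I would verify condition~(2) of Definition~\ref{1.2}: every group topology $\sT'$ on $G$ that refines $\sT$ and has the same convergent sequences must coincide with $\sT$. The plan splits into three steps, using the $g$-sequentiality of $H$ at the subgroup level, of $G/H$ at the quotient level, and a three-space property for group topologies to glue the two pieces together.

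\textbf{Step 1.} Since $H$ is $\sT$-closed and $\sT\sbs\sT'$, $H$ is closed in both topologies, so a sequence in $H$ converges in $\sT|_H$ iff it converges in $\sT$ (the limit necessarily lying in $H$), and likewise for $\sT'|_H$. Thus $\sT|_H$ and $\sT'|_H$ have the same convergent sequences while $\sT|_H\sbs\sT'|_H$. The $g$-sequentiality of $(H,\sT|_H)$ together with condition~(2) of Definition~\ref{1.2} then forces $\sT'|_H=\sT|_H$.

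\textbf{Step 2.} Clearly $\sT/H\sbs\sT'/H$. To compare convergent sequences, take $\bar g_n\to\bar g$ in $\sT/H$; by the lifting property assumed for $q\colon G\rar G/H$, pick $g_n\to g$ in $(G,\sT)$ with $q(g_n)=\bar g_n$. Since $\sT$ and $\sT'$ have the same convergent sequences, $g_n\to g$ in $\sT'$, hence $\bar g_n\to\bar g$ in $\sT'/H$. Conversely, convergence in the finer quotient $\sT'/H$ implies convergence in $\sT/H$. Applying condition~(2) to the $g$-sequential group $(G/H,\sT/H)$ gives $\sT'/H=\sT/H$.

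\textbf{Step 3.} Finally invoke the classical three-space property for topological groups: two comparable Hausdorff group topologies on $G$ which induce the same topology on a closed normal subgroup $H$ and the same quotient topology on $G/H$ must coincide. This gives $\sT=\sT'$, so $G$ is $g$-sequential.

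The decisive ingredient is the lifting property used in Step~2: it transports convergent sequences from $G/H$ up to $G$, and without it $\sT/H$ and $\sT'/H$ could have different convergent sequences, so the $g$-sequentiality of $G/H$ could not be brought to bear. Step~1 is automatic, and Step~3 is standard, proved by picking a symmetric $\sT'$-open $V_1$ with $V_1^2\sbs V$, observing that the saturation $V_1H$ is $\sT$-open (from the quotient equality) and that a $\sT$-open $W\ni e$ with $W\cap H\sbs V_1$ exists (from the subgroup equality), and then extracting from $W\cap V_1H$ a $\sT$-open nbhd of $e$ contained in $V$.
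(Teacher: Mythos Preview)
Your proof is correct and follows essentially the same route as the paper's: take a finer group topology $\sT'$ with the same convergent sequences, use the $g$-sequentiality of $H$ to get $\sT|_H=\sT'|_H$, use the lifting property together with the $g$-sequentiality of $G/H$ to get $\sT/H=\sT'/H$, and conclude via the three-space lemma for comparable group topologies. The paper's argument is identical in structure, simply citing the three-space step as Lemma~1 of \cite{DS} rather than sketching it.
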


\begin{proof} 
 Let $\sT$ be the original topology of $G$, and let $\sT'$ be
a finer group topology with the same convergent sequences.
We must prove that $\sT=\sT'$. Since $H$ is
$g$-sequential, the topologies $\sT$ and $\sT'$ agree on $H$. Since $q:
G\rar G/H$ has the lifting property, the quotient topologies $\sT/H$ and
$\sT'/H$ on $G/H$ have the same convergent sequences. Since the group $(G/H,
\sT/H)$ is $g$-sequential, the topologies $\sT/H$ and $\sT'/H$ are equal.
Now Lemma~1 of \cite{DS} implies that $\sT=\sT'$.
\end{proof}

Combining Proposition~3.8 with Proposition~\ref{3.7}, we get the following corollary:

\begin{corollary} 
\label{3.9}
If a locally compact group $G$ contains a closed
normal subgroup $H$ such that both $H$ and $G/H$ are $g$-sequential, then
$G$ is $g$-sequential.\qed
\end{corollary}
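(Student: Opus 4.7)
The plan is to derive this corollary by simply chaining the two immediately preceding results, since all the hypotheses needed for them are supplied. There is no real obstacle here; the work has already been done in Propositions~\ref{3.7} and~\ref{3.8}, and the corollary is a packaging of these two facts.

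First, I would invoke Proposition~\ref{3.7}: since $G$ is locally compact and $H$ is a closed subgroup of $G$ (which it is, being closed and normal), the quotient map $q\colon G\rar G/H$ has the lifting property for convergent sequences in the sense of Definition~\ref{d:3.6}. This is precisely the hypothesis on the quotient map required by Proposition~\ref{3.8}.

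Next, I would apply Proposition~\ref{3.8} to the closed normal subgroup $H$ of $G$. The hypotheses of that proposition are now all satisfied: $H$ is a closed normal subgroup, the quotient map $q$ has the lifting property (by the previous step), and by assumption both $H$ and $G/H$ are $g$-sequential. Proposition~\ref{3.8} therefore yields that $G$ is $g$-sequential, which is exactly the conclusion of the corollary.

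If anything merits attention, it is merely verifying that the hypothesis ``$H$ is a closed subgroup'' needed by Proposition~\ref{3.7} is weaker than our hypothesis that $H$ is a closed normal subgroup, so Proposition~\ref{3.7} does apply. Beyond this trivial check, no further argument is needed; the entire proof amounts to the single line ``combine Propositions~\ref{3.7} and~\ref{3.8},'' which the authors indeed give.
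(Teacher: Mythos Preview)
Your proposal is correct and matches the paper's own approach exactly: the paper states the corollary immediately after the sentence ``Combining Proposition~3.8 with Proposition~\ref{3.7}, we get the following corollary,'' and gives no further argument. Your explicit unpacking of this combination is precisely what the authors intend.
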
 

This corollary permits to reduce the proof of Theorem~3.14 to the case of a
compact group and to consider separately the cases of connected groups and
of zero-dimensional groups.
\begin{proposition}  
\label{3.10} Let $G$ be a compact group of weight $\tau$. If
$G$ is either Abelian or connected, there exists a continuous surjective
homomorphism $\prod_{i\in I} K_i \rar G$, where $\{K_i:i\in I\}$
is a family of compact metric groups such that $|I|=\tau$.
\end{proposition}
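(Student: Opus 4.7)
The plan is to handle the two cases separately, using Pontryagin duality for the Abelian case and the structure theory of compact connected groups for the connected case.

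For the \emph{Abelian case}, the finite case is trivial, so assume $\tau\ge\aleph_0$. Let $H=\hat G$ be the Pontryagin dual of $G$, a discrete Abelian group with $|H|=w(G)=\tau$. Embed $H$ into its divisible hull $D$; by the classical classification of divisible Abelian groups, $D=\bigoplus_{i\in I}D_i$, where each $D_i$ is isomorphic either to $\Q$ or to a Pr\"ufer group $\Z(p^\infty)$, hence countable, and $|I|\le\tau$. Setting $K_i=\widehat{D_i}$ gives a compact metric group for every $i$, and Pontryagin duality converts the inclusion $H\hookrightarrow\bigoplus_iD_i$ into a continuous surjective homomorphism $\prod_iK_i\rar G$. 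Pad with trivial factors if necessary to enforce $|I|=\tau$.

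For the \emph{connected case}, invoke the standard structure theorem for compact connected groups: with $Z_0=Z(G)_0$ the identity component of the center (compact connected Abelian) and $G'=\overline{[G,G]}$ the closed commutator subgroup (compact connected semisimple), the multiplication map $\mu:Z_0\times G'\rar G$, $(z,g)\mapsto zg$, is a continuous surjective homomorphism. Since $Z_0$ and $G'$ are closed in $G$, both have weight $\le\tau$. The Abelian case applies to $Z_0$. For $G'$, invoke the Levi--Mal'cev-type theorem for compact connected semisimple groups: there is a continuous surjective homomorphism $\prod_{j\in J}L_j\rar G'$ where each $L_j$ is a simply connected simple compact Lie group (hence metric) and $|J|\le w(G')\le\tau$. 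Composing with $\mu$ and concatenating the two indexing sets produces the required product representation of $G$.

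Both reductions rely on classical but nontrivial structure theorems. The most delicate point is the cardinality bound $|J|\le\tau$ for the simple factors in the semisimple part of the connected case. One way to justify it is to realize $G'$ as the inverse limit of its compact Lie quotients $G'/N$, a cofinal system indexable by a set of size $\le w(G')$; each simple direct factor of $G'$ ultimately corresponds to a simple direct factor of some Lie quotient $G'/N$, and since each compact connected Lie group has only finitely many simple factors, the total number of simple factors of $G'$ is controlled by $w(G')\le\tau$.
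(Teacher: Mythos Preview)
Your proof is correct and follows essentially the same route as the paper: Pontryagin duality plus the divisible hull for the Abelian case, and the structure theorem $G\cong(Z_0\times G')/\Delta$ together with the Lie-product covering of the semisimple part for the connected case. Your write-up is in fact slightly more careful than the paper's, which glosses over the cardinality bound $|J|\le\tau$ that you take the trouble to justify.
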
 

\begin{proof} Consider first the case when $G$ is Abelian. The groups $K_i$
that we are looking for will also be Abelian. By the Pontryagin duality,
the assertion is equivalent to the following: every Abelian group $A$ is
a subgroup of an Abelian group $B$ such that $|B|=|A|$ and $B$ is the direct
sum of a family of countable groups. This is clear: embed $A$ into a
divisible group $B$ of the same cardinality
 \cite[theorem 24.1 and proposition 26.2]{F} and note that every divisible Abelian group is the
direct sum of a family of countable groups \cite[theorem 23.1]{F}.
Now consider the case when $G$ is connected. Then $G$ is a homomorphic image
of the product of its center $Z$ and its commutator group $D$ \cite[Theorem 9.24]{HofM}. We
have just proved that $Z$ is an image of the product of a family of compact
metric groups, and the same is true for $D$, since $D$ is an image of the
product of a family of semisimple compact Lie groups \cite[Theorem 9.19]{HofM}.
\end{proof}

The case of zero-dimensional compact groups is more complicated. It seems
that the easiest way to cope with it is to invoke the notion of the free
zero-dimensional compact group. Let $X$ be a zero-dimensional compact space.
Then we can define the {\it free zero-dimensional compact group\/} $C_0(X)$
on $X$ which is characterized by the following properties: $C_0(X)$ is a
zero-dimensional compact group, $X$ is a subspace of $C_0(X)$, and every
continuous map $f:X\rar G$, where $G$ is a zero-dimensional compact group,
extends uniquely to a continuous homomorphism $\bar f:C_0(X)\rar G$. It
suffices if the last property holds for all finite groups $G$, since every
zero-dimensional compact group is a subgroup of a product of finite groups.
Hence the group $C_0(X)$ can be constructed as follows. Let $\sF$ be the
family of all continuous maps $f:X\rar G_f$ of $X$ to finite discrete
groups (considered up to an isomorphism). Let $K=\prod \{G_f: f\in \sF\}$.
There is a natural embedding $j:X\rar K$ defined by $j(x)=\{f(x):f\in\sF\}$.
The group $C_0(X)$ can be identified with the
subgroup of $K$ generated by $X$. Since $w(K)=\Card(\sF)=w(X)$, it follows
that the weight of the group $C_0(X)$ equals the weight of $X$. In
particular, the group $C_0(X)$ is metrizable if $X$ is metrizable.
\begin{proposition} 
\label{3.11}
Let $G$ be a zero-dimensional compact group of
weight $\tau$. Let $A$ be a set of cardinality $\tau$. One can assign to
every subset $B\sbs A$ a closed normal subgroup $G_B$ so that
\begin{enumerate}
\item $G_{B\cup C}=G_BG_C$ for all $B,C\sbs A$;
\item if $B_1\supset B_2\supset\dots$ is a decreasing sequence of subsets
of $A$ such that $\bigcap B_n=\emptyset$, then the sequence of subgroups
$G_{B_1}, G_{B_2}, \dots$ converges to the unity (in the sense that every
$U\in \sN(G)$ contains all but finitely many of them);
\item if $B\sbs A$ and $A\setminus B$ is countable, there exists a closed
metrizable subgroup $M$ of $G$ such that $G=G_BM$.
\end{enumerate}
\end{proposition}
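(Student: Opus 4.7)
The plan is to construct the family $\{G_B:B\sbs A\}$ from a family $\{H_\a:\a\in A\}$ of closed normal metrizable subgroups of $G$ that play the role of ``coordinate subgroups'' in a would-be product decomposition of~$G$, even though $G$ is not literally a product.

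\emph{Setup.} Following the scheme in the proof of Proposition~\ref{3.7}, realize $G$ as a continuous inverse limit $G=\varprojlim_{\a<\tau} G_\a$ along a $\tau$-indexed spectrum of compact metrizable zero-dimensional groups with open bonding maps and $G_0=\{e\}$; put $N_\a=\ker(G\to G_\a)$, so the $N_\a$ form a decreasing chain of closed normal subgroups with $\bigcap N_\a=\{e\}$ and each $N_\a/N_{\a+1}$ metrizable. At each successor stage choose a closed normal metrizable subgroup $H_{\a+1}\sbs N_\a$ with $H_{\a+1}\cdot N_{\a+1}=N_\a$ (a metrizable closed lift of the successor quotient; one uses that $N_\a$ is profinite and $N_\a/N_{\a+1}$ is metrizable, extracting the lift inside a small enough open normal subgroup). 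After reindexing the successor stages, take $A=\tau$ and define $G_B=\overline{\langle H_\a:\a\in B\rangle}$, automatically a closed normal subgroup of $G$.

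\emph{Verifications.} For~(1), the inclusion $G_BG_C\sbs G_{B\cup C}$ is immediate. Conversely, normality of each $H_\a$ allows any word in $\bigcup_{\a\in B\cup C}H_\a$ to be rearranged (by repeatedly replacing $hk$ with $k\cdot(k\obr hk)$) into a product of a word over $B$ and a word over $C\stm B$; since $G_B\cdot G_C$ is the image of the compact $G_B\times G_C$ under multiplication and hence closed, taking closures yields $G_{B\cup C}=G_BG_C$. For~(2), given $U\in\sN(G)$ pick $\a_0$ with $N_{\a_0}\sbs U$; the construction is arranged so that only finitely many $H_\a$'s lie outside~$N_{\a_0}$ (this is the critical design constraint on the spectrum, and is where the argument must work harder than the well-ordered-chain intuition would suggest), so $B_n\downarrow\emptyset$ eventually misses that finite exceptional set and $G_{B_n}\sbs N_{\a_0}\sbs U$. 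For~(3), if $A\stm B=\{\a_1,\a_2,\dots\}$ is countable, take $M=\overline{\langle H_{\a_i}:i<\o\rangle}$; by~(1) $G=G_A=G_B\cdot M$, and $M$ is compact metrizable as the closure of the countable increasing union of the metrizable compact subgroups $H_{\a_1}\cdots H_{\a_n}$ (each a normal product of metrizable compacts, hence metrizable).

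\emph{Main obstacle.} The delicate part is the initial construction of the $H_\a$: they must simultaneously (a) generate $G$ in the closed-subgroup sense, (b) be ``locally finite'' with respect to the identity filter, in that $\{\a:H_\a\not\sbs U\}$ is finite for each $U\in\sN(G)$, and (c) have every countable subfamily generating a metrizable subgroup. These requirements pull in different directions in an ordinal-indexed spectrum, and the resolution is a careful interleaved recursion: the spectrum is chosen so that each stage $N_{\a+1}$ already contains all but finitely many previously constructed $H_\beta$'s. Zero-dimensionality is essential here, as it supplies an abundance of open normal subgroups to absorb previously constructed pieces and guarantees the splittings needed to produce the metrizable lifts $H_{\a+1}$.
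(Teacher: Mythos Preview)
Your outline follows a natural intuition --- decompose $G$ as an inverse limit along a well-ordered spectrum and peel off ``coordinate'' subgroups $H_\a$ at each successor step --- but several steps are genuine gaps rather than routine details, and one verification is actually wrong.

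\textbf{The construction of the $H_\a$.} You need each $H_{\a+1}$ to be (i)~closed and metrizable, (ii)~normal in $G$ (not merely in $N_\a$), and (iii)~satisfy $H_{\a+1}N_{\a+1}=N_\a$. Your parenthetical ``extracting the lift inside a small enough open normal subgroup'' does not explain how to achieve normality in $G$ together with metrizability; in a nonabelian profinite group there is no general reason a metrizable $G$-normal complement to $N_{\a+1}$ inside $N_\a$ should exist. You also need, for property~(2), that $\{\a:H_\a\not\sbs U\}$ is finite for every $U\in\sN(G)$. In a well-ordered spectrum we only know $H_{\a+1}\sbs N_\a$, so for $U=N_{\a_0}$ the possibly offending indices are all $\a\le\a_0$, of which there may be uncountably many. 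You acknowledge this as the ``main obstacle'' but the recursion you sketch (``each $N_{\a+1}$ already contains all but finitely many previously constructed $H_\beta$'') is not shown to be achievable, and in fact forces the bad sets to increase with $\a$, so their union over all $\a$ is at best countable rather than finite --- which is not what you need.

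\textbf{The metrizability argument in~(3) is incorrect.} You claim $M=\overline{\langle H_{\a_i}:i<\o\rangle}$ is metrizable because it is the closure of an increasing union of metrizable compacta $H_{\a_1}\cdots H_{\a_n}$. But the closure of such a union need not be metrizable: in $2^{\o_1}$ the finitely supported points form an increasing union of finite (hence metrizable) sets whose closure is the whole non-metrizable cube. Without further control on how $M$ sits inside $G$, this step fails.

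\textbf{How the paper avoids all of this.} The paper does not attempt to build coordinate subgroups inside an arbitrary $G$. Instead it observes that the desired properties are inherited under quotient homomorphisms, and replaces $G$ by the free zero-dimensional compact group $C_0(X)$ on $X=2^A$, which maps onto $G$. In $C_0(X)$ one sets $K_B=\ker\bigl(C_0(X)\to C_0(2^B)\bigr)$ and $G_B=K_{A\stm B}$. Normality is automatic (kernels), property~(1) reduces to an elementary fact about ``$B$-nice'' maps on the Cantor cube, property~(2) follows because any continuous map from $2^A$ to a finite group depends on finitely many coordinates, and property~(3) comes from an explicit topological section $s:2^B\to 2^A$ of the projection, giving $M\cong C_0(2^B)$, which is metrizable since $B$ is countable. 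The product structure of $2^A$ --- as opposed to a well-ordered filtration --- is exactly what makes the ``local finiteness'' in~(2) and the metrizable splitting in~(3) fall out for free.
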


\begin{proof} The property of $G$ that we must prove is preserved by
homomorphisms, so we may replace $G$ by any group which admits a
homomorphism onto $G$. Let $X=2^A$. Then $G$ is homeomorphic to $X$
\cite[theorem 9.15]{HR}
(for our purposes it would suffice to know that $G$ is dyadic),
so there exists a continuous homomorphism of the free zero-dimensional group
$C_0(X)$ onto $G$. Thus we may confine ourselves to the case $G=C_0(X)$.
For every $B\sbs A$ let $X_B=2^B$, and let $p_B:X\rar X_B$ be the natural
projection. The map $p_B$ extends to a homomorphism $C_0(p_B):G\rar
C_0(X_B)$. Let $K_B$ be the kernel of this homomorphism, and let
$G_B=K_{A\setminus B}$. We show that the assignment $B\mapsto G_B$ is as
required.

Given a subset $B\sbs A$, let us say that a map $f$ defined on $X$ is
{\it $B$-nice} if for any $x, y\in X$ with $p_B(x)=p_B(y)$ we have
$f(x)=f(y)$. Our proof leans on the following Assertion:

\smallskip
\it Let $q:G\rar H$ be a continuous homomorphism, and let $B$ be a subset of
$A$. The kernel of $q$ contains the subgroup $K_B$ if and
only if the restriction of $q$ to $X$ is $B$-nice.
\smallskip\rm

This is clear, since the kernel of $q$ contains $K_B$ if and only if $q$
can be written as the composition $q= q'C_0(p_B)$ for some homomorphism
$q':C_0(X_B)\rar H$, and a map $f:X\rar H$ is $B$-nice if and only if
$f$ can be written as the composition $f=f'p_B$ for some $f':X_B\rar H$.

We must check that
\begin{enumerate}
\item $K_{B\cap C}=K_BK_C$ for all $B,C\sbs A$;
\item if $B_1\sbs B_2\subset\dots$ is an increasing sequence of subsets
of $A$ such that $\bigcup B_n=A$, then the sequence of subgroups
$K_{B_1}, K_{B_2}, \dots$ converges to the unity;
\item if $B\sbs A$ and $B$ is countable, there exists a closed
metrizable subgroup $M$ of $G$ such that $G=K_BM$.
\end{enumerate}

We check 1. It suffices to prove that for any homomorphism
$q:G\rar H$ the kernel of $q$ contains $K_{B\cap C}$ if and only if it
contains $K_B$ and $K_C$. In virtue of the Assertion, this is equivalent
to the following: a map $f:X\rar H$ is $(B\cap C)$-nice if and only if it
is $B$-nice and $C$-nice. The last assertion is obviously true: if, say,
$f$ is $B$-nice and $C$-nice, then for any $x, y\in X$ such that
$p_{B\cap C}(x)=p_{B\cap C}(y)$ we can find $z\in X$
so that $p_B(z)=p_B(x)$ and $p_C(z)=p_C(y)$, and then $f(x)=f(z)=f(y)$,
which means that $f$ is $(B\cap C)$-nice.

We check 2. Since $G$ is a zero-dimensional compact group, the kernels
of homomorphisms of $G$ to finite groups form a base at the unity. Hence it
suffices to prove the following: if $q:G\rar H$ is a homomorphism to a
finite group $H$ and $B_1\sbs B_2\subset\dots$ is an increasing sequence of
subsets of $A$ such that $\bigcup B_n=A$, then the subgroup $K_{B_n}$ is
contained in the kernel of $q$ for some $n$ (and hence also for all larger
indices). Equivalently (see the Assertion), we must prove that every
continouos
map $f:X\rar H$ is $B_n$-nice for some $n$. Each fiber
of $f$ is a closed-and-open subset of $X$, hence the union of finitely many
basic closed-and-open sets. It follows that $f$ is $C$-nice for some finite
set $C\sbs A$. Since $\bigcup B_n=A$, we have $C\sbs B_n$ if $n$ is
sufficiently large, and then $f$ is $B_n$-nice, as required.

We check 3. Let $B$ be a countable subset of $A$. The map
$p_B:X\rar X_B$ has a right inverse $s:X_B\rar X$, hence the homomorphism
$C_0(p_B):G\rar C_0(X_B)$ also has a right inverse $C_0(s):C_0(X_B)\rar G$.
It follows that $G$ is the semidirect product of the kernel $K_B$ of $p_B$
and the range of $C_0(s)$, say $M$, which is isomorphic to
$C_0(X_B)$. In the paragraph preceding the proposition that we are proving
we noted that the functor $C_0$ preserves metrizability. It follows that
the group $C_0(X_B)$ is metrizable, and so is the group $M$.
\end{proof}

Proposition~3.11 allows to apply the argument used in the proof of
Theorem~3.2 to zero-dimensional compact groups. We need
two more propositions for the proof of Theorem~3.14.
\begin{proposition} 
\label{3.12}
Let $p$ be a subadditive measure on a set $A$
(Definition~1.3). If the cardinal $\Card(A)$ is not $g$-sequential, there
exists a countable subset $B\sbs A$ such that $p(A\setminus B)=0$.
\end{proposition}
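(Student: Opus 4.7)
The plan is to argue by contradiction. If $\Card(A)$ is countable, take $B=A$; so I assume $\Card(A)$ is uncountable and that $p(A\stm B)>0$ for every countable $B\sbs A$. Set
$$
c=\inf\{p(A\stm B):B\sbs A \text{ countable}\}>0.
$$
Pick a minimizing sequence $(B_n)$ and let $B_0=\bigcup_n B_n$; then $B_0$ is countable and monotonicity of $p$ gives $p(A\stm B_0)\le p(A\stm B_n)$ for each $n$, so $p(A\stm B_0)=c$ and the infimum is attained at a countable set.

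Put $A'=A\stm B_0$, noting $\Card(A')=\Card(A)$. The key observation is that for every finite $F\sbs A'$ the set $B_0\cup F$ is still countable, so
$$
c\le p(A\stm(B_0\cup F))=p(A'\stm F)\le p(A')=c,
$$
forcing $p(A'\stm F)=c$. The aim now is to construct a subadditive measure on $A'$ witnessing that $\Card(A')=\Card(A)$ is $g$-sequential, contradicting the hypothesis. The naive candidate $p\vert_{P(A')}/c$ need not work, since $p$ may still assign positive mass to individual points of $A'$. I would instead ``peel off'' finite sets by setting
$$
\bar p(E)=\frac{1}{c}\inf\{p(E\stm F):F\sbs A' \text{ finite}\}\qquad (E\sbs A').
$$

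Verifying that $\bar p$ satisfies Definition~1.3 is routine: subadditivity is obtained by choosing, for a given $\e>0$, finite $F_1,F_2$ with $p(E_i\stm F_i)<c(\bar p(E_i)+\e)$ and using $F=F_1\cup F_2$ for $E_1\cup E_2$; the decreasing-sequence axiom follows from $\bar p(E_n)\le p(E_n)/c\to 0$. The displayed identity $p(A'\stm F)=c$ gives $\bar p(A')=1$, and inserting $F'=F$ into the infimum defining $\bar p(F)$ gives $\bar p(F)=0$ for every finite $F\sbs A'$. This is the desired $g$-sequential witness on a set of cardinality $\Card(A)$, yielding a contradiction.

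The main conceptual hurdle is recognizing that the peeling step is necessary: one only knows $p(A'\stm F)=c$, which by itself does not force $p|_{A'}$ to vanish on finite sets; taking the infimum over finite subtractions is what cleanly removes any residual mass on individual points while preserving both axioms of a subadditive measure.
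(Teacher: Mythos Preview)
Your proof is correct and takes a genuinely different route from the paper's. The paper argues as follows: since $\Card(A)$ is not $g$-sequential, Theorem~\ref{t:3.2} makes the compact group $2^A$ $g$-sequential; by Propositions~\ref{p:2.3} and~\ref{p:3.1} the subadditive measure $p$, viewed as a sequentially continuous seminorm on $2^A$, is then continuous; hence $p^{-1}(0)$ is a $G_\delta$ set containing $\emptyset$, and any such $G_\delta$ in $2^A$ contains all subsets of $A\setminus B$ for some countable $B$, giving $p(A\setminus B)=0$.

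By contrast, you work directly from Definition~\ref{1.3}: you show the infimum $c=\inf\{p(A\setminus B):B\ \text{countable}\}$ is attained at some countable $B_0$, deduce $p(A'\setminus F)=c$ for all finite $F\subset A'=A\setminus B_0$, and then build the ``peeled'' measure $\bar p(E)=c^{-1}\inf_F p(E\setminus F)$ to witness that $\Card(A)$ is $g$-sequential. Your argument is entirely elementary and self-contained, avoiding the machinery of Theorem~\ref{t:3.2} and the seminorm characterization; the paper's argument is much shorter but leans on results proved earlier in the section. One small presentational point: you assert $c>0$ before showing the infimum is attained, but of course the latter is what justifies the former via the contradiction hypothesis---the content is all there.
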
 

\begin{proof} Since the group $2^A$ is $g$-sequential (Theorem~3.2),
it follows from Propositions~2.3 and~3.1
that $p$ is continuous on $2^A$. Hence the set
$p^{-1}(0)$ is of the type $G_\delta$. This implies what we need.
\end{proof}
\begin{proposition} 
\label{3.13}
Let $G=HK$ be a compact group,
where $H$ and $K$
are closed subgroups. If $U\in \sN(H)$ and $V\in \sN(K)$, then
$UV\in\sN(G)$.
\end{proposition}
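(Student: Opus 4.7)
The plan is to argue by contradiction using a net/convergence argument, exploiting compactness of $H$ and $K$ together with a ``rebalancing'' trick that absorbs the element of $H\cap K$ coming from the limit.

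Suppose, toward a contradiction, that $UV$ is not a neighborhood of $e$ in $G$. Then $e$ lies in the closure of $G\stm UV$, so there is a net $g_\a\to e$ with $g_\a\notin UV$. Write each $g_\a=h_\a k_\a$ with $h_\a\in H$ and $k_\a\in K$, which is possible since $G=HK$. Because $H$ and $K$ are closed subgroups of the compact group $G$, they are compact; passing to a subnet, we may assume $h_\a\to h\in H$ and $k_\a\to k\in K$. Then $hk=\lim h_\a k_\a=\lim g_\a=e$, so $k=h\obr$. In particular $h\obr\in K$, and since $h\in H$ we obtain $h\in H\cap K$ (and likewise $h\obr\in H\cap K$).

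Now perform the rebalancing
\begin{equation*}
g_\a=h_\a k_\a=(h_\a h\obr)(h k_\a).
\end{equation*}
The first factor $h_\a h\obr$ lies in $H$ (as $h_\a,h\in H$) and converges to $hh\obr=e$, so eventually $h_\a h\obr\in U$. The second factor $h k_\a$ lies in $K$ (crucially using $h\in K$, which came from $h\in H\cap K$) and converges to $hh\obr=e$, so eventually $h k_\a\in V$. Hence $g_\a\in UV$ for all sufficiently large $\a$, contradicting $g_\a\notin UV$. Therefore $UV$ is a neighborhood of $e$ in $G$, i.e.\ $UV\in\sN(G)$.

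The only delicate point is that one cannot directly arrange $h_\a\in U$ and $k_\a\in V$, because the limits $h$ and $k=h\obr$ need not be the identity; they are only constrained to lie in $H\cap K$. The rebalancing step is what removes this obstruction, and it uses in an essential way that $h\in H\cap K$, so that $h\obr$ may be inserted between the two factors without leaving $H$ or $K$.
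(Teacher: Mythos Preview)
Your proof is correct. The paper takes a different, more structural route: it shows that the multiplication map $f:H\times K\to G$, $(x,y)\mapsto xy$, is open. To do this it observes that the fibers of $f$ are exactly the orbits of the right action of $L=H\cap K$ on $P=H\times K$ given by $(x,y).g=(xg,g^{-1}y)$; since $P$ is compact and $G$ is Hausdorff, the induced continuous bijection $P/L\to G$ is a homeomorphism, so $f$ is identified with the orbit map $P\to P/L$, which is open. Your rebalancing $(h_\a,k_\a)\mapsto(h_\a h^{-1},hk_\a)$ is, in disguise, exactly this $L$-action applied with $g=h^{-1}\in L$, so the two arguments share the same core idea. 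The paper's version yields openness of $f$ at every point in one stroke (and is slicker once one knows the orbit-map fact), while your net argument is entirely self-contained and avoids any appeal to quotient-space machinery.
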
 

\begin{proof} Let $P=H\times K$. We must show that the map $f:P\rar G$
defined by $f(x,y)=xy$ is open. Let $L=H\cap K$. Consider the right action of
$L$ on $P$, defined by $(x,y).g=(xg, g^{-1}y)$. The map $f$ can be
identified with the canonical map $P\rar P/L$ of $P$ onto the orbit space
$P/L$. 
Since the latter map is open, so is $f$.
\end{proof}

We are now ready to prove Theorem~3.14, which shows that whether a locally
compact group is $g$-sequential or not depends only on its local weight.
\begin{theorem} 
\label{3.14}
A locally compact group $G$ is $g$-sequential if
and only if the local weight of $G$ is not a $g$-sequential cardinal.
\end{theorem}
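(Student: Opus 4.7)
Necessity is Proposition~\ref{p:3.5}. For sufficiency, assume $\tau=\chi(G)$ is not $g$-sequential. My plan has three reduction steps followed by a measure-theoretic argument; I expect the zero-dimensional case to be the main obstacle.

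First, I reduce to the compact case. Let $V$ be a compact neighbourhood of the unity and $G_0$ the open (hence clopen) $\sigma$-compact subgroup it generates. By the Kakutani--Kodaira theorem, $G_0$ has a compact normal subgroup $K$ with $G_0/K$ metrizable. Then $G/G_0$ is discrete and $G_0/K$ is metrizable, so both are $g$-sequential, and two applications of Corollary~\ref{3.9} reduce the problem to $g$-sequentiality of $K$. Since $K$ is compact and sits in the locally compact group $G_0$, $w(K)=\chi(K)\le\chi(G)=\tau$; and subcardinals of a non-$g$-sequential cardinal are non-$g$-sequential (extend a witnessing measure by zero), so $w(K)$ is not $g$-sequential.

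Second, I split off the connected component: $K^\circ\triangleleft K$ is closed, $K/K^\circ$ is zero-dimensional compact, and both have weight $\le\tau$. By Corollary~\ref{3.9} it suffices to prove each is $g$-sequential. Proposition~\ref{3.10} presents $K^\circ$ as a continuous surjective image of a product $\prod_{i\in I}K_i$ of compact metric (and hence $g$-sequential) groups with $|I|\le\tau$; Theorem~\ref{t:3.2} makes the product $g$-sequential, and topological quotients of $g$-sequential groups are $g$-sequential by Definition~\ref{1.2}(3), so $K^\circ$ is $g$-sequential.

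The remaining zero-dimensional case is the crux. Write $L=K/K^\circ$, fix $A$ with $|A|=w(L)\le\tau$, and let $\{G_B:B\sbs A\}$ be the family of closed normal subgroups of $L$ from Proposition~\ref{3.11}. By Proposition~\ref{p:3.1} I must show every sequentially continuous seminorm $p\le 1$ on $L$ is continuous at the unity. I define the oscillation
$$
q(B)=\inf\{\sup\{p(x):x\in U\}:U\in\sN(G_B)\},\qquad B\sbs A,
$$
and verify, mirroring the proof of Theorem~\ref{t:3.2}, that $q$ is a subadditive measure on $A$: subadditivity uses property~(1) of Proposition~\ref{3.11} together with Proposition~\ref{3.13}, since $U\in\sN(G_B)$ and $V\in\sN(G_C)$ give $UV\in\sN(G_BG_C)=\sN(G_{B\cup C})$; sequential continuity of $q$ follows from property~(2), because if $B_n\downarrow\emptyset$ and $x_n\in G_{B_n}$ nearly attains $\sup_{G_{B_n}}p$, then $G_{B_n}\to\{e\}$ forces $x_n\to e$, whence $p(x_n)\to 0$ and $q(B_n)\to 0$.

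Since $|A|\le\tau$ is not $g$-sequential, Proposition~\ref{3.12} delivers a countable $B\sbs A$ with $q(A\stm B)=0$, and property~(3) of Proposition~\ref{3.11} yields a closed metrizable subgroup $M\sbs L$ with $L=G_{A\stm B}\cdot M$. Given $\e>0$, the vanishing oscillation supplies $U\in\sN(G_{A\stm B})$ with $\sup_U p<\e/2$, while the metrizability of $M$ upgrades sequential continuity of $p|_M$ to continuity at the unity and yields $V\in\sN(M)$ with $\sup_V p<\e/2$. Proposition~\ref{3.13} then gives $UV\in\sN(L)$, and $p(xy)\le p(x)+p(y)<\e$ throughout $UV$, so $p$ is continuous at the unity, completing the proof.
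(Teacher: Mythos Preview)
Your argument follows the paper's proof essentially line for line: the same reductions (open $\sigma$-compact subgroup, compact normal $K$ with metrizable quotient, connected component), the same use of Proposition~\ref{3.10} plus Theorem~\ref{t:3.2} for the connected case, and the same oscillation measure $q$ built from the subgroups of Proposition~\ref{3.11} together with Propositions~\ref{3.12} and~\ref{3.13} for the zero-dimensional case.

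One small slip: in the first reduction you invoke Corollary~\ref{3.9} for the pair $G_0\triangleleft G$, but the open $\sigma$-compact subgroup $G_0$ generated by a compact neighbourhood need not be normal in $G$, so $G/G_0$ is not in general a group. The paper sidesteps this by observing directly that a topological group containing an open $g$-sequential subgroup is itself $g$-sequential (a sequentially continuous homomorphism is continuous on the open subgroup, hence at the identity, hence everywhere). Replace your appeal to Corollary~\ref{3.9} at that step with this remark and the proof is complete.
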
 

\begin{proof} The implication in one direction was established in Proposition
~3.5. Now suppose that the local weight of $G$ is not a $g$-sequential
cardinal. We must show that the group $G$ is $g$-sequential. Clearly a group
is $g$-sequential if it contains an open $g$-sequential subgroup, so we may
assume that $G$ is $\sigma$-compact. In this case $G$ contains a compact
normal subgroup $K$ such that the quotient group $G/K$ is metrizable (and
hence $g$-sequential) \cite[Theorem 8.7]{HR}. Corollary~3.9
implies that we may confine ourselves to the case when $G$ is compact.
Let $C$ be the connected component of the unity in $G$. The quotient group
$G/C$ is zero-dimensional, and another application of Corollary~3.9 shows
that we may assume that $G$ is either connected or zero-dimensional. In the
first case there exists a surjective homomorphism $P\rar G$, where $P$ is
the product of a family $\ga$ of compact metric groups such that $\Card(\ga)
=w(G)$ (Proposition~3.10). In virtue of Theorem~3.2, $P$ is $g$-sequential,
hence so is $G$. Now consider the case when $G$ is zero-dimensional. Let
$A$ be a set of cardinality $w(G)$, and let $B\mapsto G_B$ be an
assignment with the properties described in Proposition~3.11. In order
to prove that every sequentially continuous seminorm $p$ on $G$ is
continuous (Proposition~3.1), we can apply the
same construction as in the proof of Theorem~3.2. Just as in that proof,
define a function $q:P(A)\rar \R$ by
$$
q(B)=\inf \{\sup \{p(x): x\in U\}: U\in \sN(G_B)\}.
$$
Then $q$ is a subadditive measure on $A$. To prove that $q(B\cup C)\le
q(B)+q(C)$, use the property 1 of Proposition~3.11 and Proposition~3.13.
To prove that $\lim q(B_n)=0$ whenever $B_1\supset B_2\supset\dots$ and
$\bigcap B_n=\emptyset$, use the property 2 of Proposition~3.11. Since
the cardinal $\Card(A)=w(G)$ is not $g$-sequential, Proposition~3.12 implies
that $q(B)=0$ for some subset $B\sbs A$ such that $A\setminus B$ is
countable. Pick a closed
metrizable subgroup $M\sbs G$ so that $G=G_BM$ (property 3 of
Proposition~3.11). The restriction of $p$ to $G_B$ is continuous since $q(B)
=0$, and the restriction of $p$ to $M$ is continuous since $M$ is
metrizable. The argument used to prove the inequality $q(B\cup C)\le
q(B)+q(C)$, based upon Proposition~3.13, shows that $p$ is continuous
on $G$.
\end{proof}

Theorem~3.14 answers the question of what locally compact groups admit
a strictly finer group topology with the same convergent sequences. We now
show that a locally compact group admits a strictly finer group topology
which agrees with the original one on every countable set if and only if
the local weight of $G$ is Ulam-measurable. As observed by A.~V.~\Arh\ \cite{A2},
in one direction this follows from the results of \cite{V}. If $(G,\sT)$ is
a locally compact group and $\sT'$ is a strictly finer group topology which
agrees with $\sT$ on countable sets, then the group $(G,\sT')$ is locally
countably compact, hence its completion $H$ is locally compact. Since the
natural homomorphism of $G$ to $H$ is sequentially continuous but not
continuous, the local weight of $(G,\sT)$ must be Ulam-measurable \cite{V}. We
now establish the converse (Theorem~3.16).

The definition of the $g$-modification $\sT_g$ of a topology $\sT$ on a set
$X$
(Definition~2.1) depended on the notion of a subadditive measure. We get the
definition of the {\it $m$-modification} $\sT_m$ of a topology $\sT$ if we
replace subadditive measures by two-valued measures. In other words, the
topology $\sT_m$ can be described as follows.
Let $\sS=\{U_\a: \a \in A\}$ be a family of $\sT$-open sets in $X$, and let
$p$ be a $\o_1$-complete ultrafilter on the index set $A$. Let
$$
W(\sS,p)=\bigcup_{B\in p} \bigcap_{\a\in B} U_\a.
$$
be the set of all $x\in X$ such that the set
$\{\a\in A: x\in U_\a\}$ is in $p$. The sets $W(\sS,p)$ form a base for
the topology $\sT_m$.

All properties of the $g$-modification $\sT_g$ of a topology $\sT$
established in Propositions~2.2 and~2.4--2.6 hold also for the
$m$-modification $\sT_m$. Note that $\sT\sbs \sT_m\sbs \sT_g$.
\begin{proposition} 
\label{p:3.15}Let $(X, \sT)$ be a topological space. The
topologies $\sT$ and $\sT_m$ agree on every subset of $X$ of non-Ulam-measurable
cardinality.
\end{proposition}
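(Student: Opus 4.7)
The plan is to show that every basic $\sT_m$-open set restricts to a $\sT$-open set in $Y$; the inclusion $\sT\sbs\sT_m$ gives the other direction immediately. I will fix a basic $\sT_m$-neighbourhood $W=W(\sS,p)$, where $\sS=\{U_\a:\a\in A\}$ is a family of $\sT$-open sets in $X$ and $p$ is an $\o_1$-complete ultrafilter on $A$. The goal will be to produce a single index $\a_0\in A$ with $W\cap Y = U_{\a_0}\cap Y$, which is patently $\sT|_Y$-open.

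The key step is a reduction of the ultrafilter $p$. I will introduce the equivalence relation on $A$ defined by $\a\sim\beta$ iff $U_\a\cap Y=U_\beta\cap Y$. The quotient $\bar A=A/{\sim}$ injects into $2^Y$ via $[\a]\mapsto U_\a\cap Y$, so $\Card(\bar A)\le 2^{\Card(Y)}$. The decisive set-theoretic input is that the first Ulam-measurable cardinal equals the first measurable cardinal and is therefore strongly inaccessible; consequently, $\Card(Y)$ being non-Ulam-measurable implies that $2^{\Card(Y)}$, and hence $\Card(\bar A)$, is non-Ulam-measurable as well. The push-forward $\bar p$ of $p$ to $\bar A$ is still $\o_1$-complete, and the absence of a free $\o_1$-complete ultrafilter on any non-Ulam-measurable set will force $\bar p$ to be principal: there must exist a class $[\a_0]$ with $\{[\a_0]\}\in\bar p$.

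Translating back, this gives $T=\{\beta\in A:U_\beta\cap Y=U_{\a_0}\cap Y\}\in p$. For $y\in U_{\a_0}\cap Y$ every $\beta\in T$ satisfies $y\in U_\beta$, so $T\sbs\{\beta:y\in U_\beta\}\in p$ and $y\in W$. For $y\in Y\stm U_{\a_0}$ the set $T$ is disjoint from $\{\beta:y\in U_\beta\}$, which must therefore fail to lie in $p$, so $y\notin W$. Hence $W\cap Y=U_{\a_0}\cap Y$, as desired.

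The main obstacle is conceptual rather than computational: one is tempted to push $p$ forward directly along a map $A\to Y$, which does not quite work because $Y$ is not naturally indexed by $A$. The workable surrogate is the quotient $\bar A\sbs 2^Y$, and controlling its cardinality requires the classical fact that the first measurable cardinal is strongly inaccessible, so that non-Ulam-measurability is inherited from $\Card(Y)$ up to $2^{\Card(Y)}$.
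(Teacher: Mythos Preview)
Your proof is correct and follows essentially the same route as the paper: push the $\omega_1$-complete ultrafilter $p$ forward along the map $\a\mapsto U_\a\cap Y$ into a set of size at most $2^{\Card(Y)}$, invoke strong inaccessibility of the first measurable cardinal to conclude this target is non-Ulam-measurable, and deduce that the image ultrafilter is principal, which collapses $W\cap Y$ to a single $U_{\a_0}\cap Y$. The only cosmetic difference is that the paper first reduces to the case $Y=X$ (using that $m$-modification commutes with subspaces) and then pushes $p$ into $\sT\sbs 2^X$ rather than $2^Y$, whereas you work directly with the subset $Y$; both arguments hinge on the same cardinality bound and the same principality step.
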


\begin{proof} It suffices to prove that $\sT_m=\sT$ if $\Card(X)$ is
non-Ulam-measurable. Let $W(\sS,p)$ be a basic open set for $(X,\sT_m)$, where
$\sS=\{U_\a: \a \in A\}$ is a family of open sets in $(X,\sT)$ and $p$ is a
$\omega_1$-complete ultrafilter on $A$.  Since
$\Card(X)$ is non-Ulam-measurable, the cardinal $\Card(\sT)\le
2^{\Card(X)}$ is also non-Ulam-measurable. It follows that there is $B\in p$ and
$U\in \sT$ such that $U_\a=U$ for every $\a\in B$. Then $W(\sS,p)=U$. Thus
$\sT_m=\sT$.
\end{proof}

Let us say that a space $(X,\sT)$ is {\it $m$-stable} if $\sT=\sT_m$.
Proposition~3.15 means that every space of non-Ulam-measurable cardinality is
$m$-stable. On the other hand, it is easy to see that the cube $2^\tau$ is
not $m$-stable if $\tau$ is a measurable cardinal.
\begin{theorem} 
\label{3.16}
Let $G$ be a locally compact group of Ulam-measurable
local weight. Then
\begin{enumerate}
\item $G$ admits a strictly finer group topology which agrees with the
original one on every set of non-Ulam-measurable cardinality;
\item there exists a locally compact group $H$ and a sequentially continuous
homomorphism $G\rar H$ which is not continuous.
\end{enumerate}
\end{theorem}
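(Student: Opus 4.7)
The plan is to use the $m$-modification $\sT_m$ of the given topology $\sT$, introduced just above the statement, as the desired refinement in both parts.

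For part (1), I would show $\sT_m$ itself is the required finer topology. The analogues for the $m$-modification of Propositions~\ref{p:2.6} and~\ref{p:2.2}(b), together with Proposition~\ref{p:3.15}, immediately give that $\sT_m$ is a Hausdorff group topology containing $\sT$, is compatible with subspaces, and agrees with $\sT$ on every subset of $G$ of non-Ulam-measurable cardinality. The only nontrivial point is strictness, $\sT\subsetneq\sT_m$. Let $\tau=\chi(G)$; since $\tau$ is Ulam-measurable there is an $\o_1$-complete free ultrafilter $p$ on a set $A$ of cardinality $\tau$. By Proposition~\ref{3.4}, $G$ contains a topological copy of $2^A$. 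Taking $\sS=\{U_\a:\a\in A\}$ with $U_\a=\{x\in 2^A:x_\a=0\}$, the set $W(\sS,p)=\{x\in 2^A:\{\a:x_\a=0\}\in p\}$ is $\sT_m$-open; it fails to be $\sT$-open at $x=0$ because every basic $\sT$-neighbourhood of $0$ in $2^A$ contains points whose zero-support is finite, and no finite set lies in the free ultrafilter $p$. Thus $2^A$ is not $m$-stable, and the subspace property forces $\sT\subsetneq\sT_m$ on $G$ as well.

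For part (2), I would take $H$ to be the Raikov completion of $(G,\sT_m)$. Since any convergent sequence together with its limit is a countable set, part (1) guarantees that $\sT$ and $\sT_m$ have the same convergent sequences. Fix a $\sT$-compact neighbourhood $K$ of the identity. Every sequence in $K$ admits a $\sT$-convergent subsequence, which is automatically $\sT_m$-convergent, so $K$ is $\sT_m$-sequentially compact, hence $\sT_m$-countably compact, hence $\sT_m$-totally bounded by the standard fact that countably compact subsets of Hausdorff topological groups are totally bounded. The closure $\overline{K}^H$ is therefore compact, and because $(G,\sT_m)\hookrightarrow H$ is a topological embedding with dense image, this closure contains a nonempty $H$-open set and is thus an $H$-neighbourhood of the identity. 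Consequently $H$ is locally compact. The composition $(G,\sT)\rar(G,\sT_m)\hookrightarrow H$ is then the desired homomorphism: the first arrow is sequentially continuous but discontinuous, while the second is continuous.

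The main obstacle is the strictness of $\sT\subsetneq\sT_m$, which rests on the non-$m$-stability of $2^\tau$ for Ulam-measurable $\tau$. The remark just before the theorem states this for measurable $\tau$, but the argument above works unchanged for any Ulam-measurable cardinal via an arbitrary $\o_1$-complete free ultrafilter. Everything else is a routine application of the $m$-modification machinery already developed, together with the completion-theoretic facts just cited.
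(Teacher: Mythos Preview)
Your approach is exactly the paper's: use $\sT_m$ for part~(1) and take $H$ to be the completion of $(G,\sT_m)$ for part~(2). Your treatment of part~(1), including the explicit verification that $2^\tau$ is not $m$-stable for every Ulam-measurable (not merely measurable) $\tau$, is correct and in fact more detailed than the paper's one-line appeal to Proposition~\ref{3.4}.

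There is, however, a genuine gap in your argument for part~(2). You write that ``every sequence in $K$ admits a $\sT$-convergent subsequence.'' This is false in general: a compact Hausdorff space need not be sequentially compact (for instance $2^{\mathfrak c}$), and since here $\chi(G)$ is Ulam-measurable---hence at least the first measurable cardinal---the compact neighbourhood $K$ will typically \emph{not} be $\sT$-sequentially compact. The repair is easy and is what the paper's one-line remark before the theorem implicitly relies on: argue via cluster points rather than convergent subsequences. Given a sequence $\{x_n\}$ in $K$, $\sT$-compactness yields a $\sT$-cluster point $x\in K$. The countable set $S=\{x_n:n\in\omega\}\cup\{x\}$ carries the same subspace topology under $\sT$ and $\sT_m$ by part~(1), and whether $x$ is a cluster point of $\{x_n\}$ depends only on the subspace topology on $S$; hence $x$ is also a $\sT_m$-cluster point. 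This gives $\sT_m$-countable compactness of $K$ directly, after which your totally-bounded/completion argument goes through unchanged.
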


\begin{proof} The group $G$ contains a topological copy of the cube $2^\tau$,
where $\tau=\chi(G)$ (Proposition~3.4), hence $G$ is not $m$-stable.
It follows that
the $m$-modification $\sT_m$ of the original topology $\sT$ of $G$ is as
required. We noted above that the second part of the theorem follows from
the first: we can take for $H$ the completion of $(G, \sT_m)$.
\end{proof}

\begin{corollary} 
\label{3.17}Every compact group of Ulam-measurable weight admits
a strictly finer countably compact group topology.\qed
\end{corollary}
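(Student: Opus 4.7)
The plan is to deduce Corollary~3.17 directly from Theorem~3.16(1) specialized to the compact case. For a compact group $G$, the local weight coincides with the weight: local weight is trivially bounded by weight, and conversely Proposition~3.4 shows that local weight $\tau$ forces an embedded copy of $2^\tau$ and hence weight at least $\tau$. So the hypothesis that $w(G)$ be Ulam-measurable is exactly the hypothesis that $\chi(G)$ be Ulam-measurable, which lets me invoke Theorem~3.16(1). That theorem supplies a group topology $\sT_m$ on $G$, strictly finer than the original topology $\sT$, that agrees with $\sT$ on every subset of non-Ulam-measurable cardinality.

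The only remaining task is to verify that $(G,\sT_m)$ is countably compact. My argument will proceed as follows. Let $S\subseteq G$ be an arbitrary countably infinite subset. Since $(G,\sT)$ is compact Hausdorff, $S$ has an accumulation point $x\in G$ with respect to $\sT$. The set $S\cup\{x\}$ has cardinality $\aleph_0$, which is not Ulam-measurable, so by Theorem~3.16(1) the subspace topologies induced on $S\cup\{x\}$ by $\sT$ and by $\sT_m$ coincide. Being an accumulation point of $S$ is a purely subspace-topological statement about $S\cup\{x\}$, so $x$ remains an accumulation point of $S$ in $(G,\sT_m)$. Hence every countably infinite subset of $(G,\sT_m)$ has an accumulation point, which for a Hausdorff space (and $\sT_m$ is Hausdorff, being finer than the Hausdorff topology $\sT$) is equivalent to countable compactness.

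Essentially all the real work has been absorbed into Theorem~3.16, so no genuine obstacle is expected to remain at this stage; the cluster-point preservation step is automatic once the refined topology is available. The one subtlety worth flagging is the interpretation of the phrase ``$\sT$ and $\sT_m$ agree on $A$'' as meaning coincidence of the induced subspace topologies on $A$, which is precisely what Proposition~3.15 establishes and what the construction of the $m$-modification gives throughout Section~3.
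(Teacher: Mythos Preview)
Your approach is the same as the paper's: the corollary is stated with an immediate \qed, the reasoning having already been sketched in the Introduction (``If $G$ is compact in the original topology, it follows that in the new topology it is countably compact''). Your accumulation-point argument for countable compactness of $(G,\sT_m)$ is correct and is exactly what the paper has in mind.

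One small slip worth fixing: your justification that local weight equals weight for compact $G$ is circular. Both of your claimed inequalities are the \emph{same} inequality $\chi(G)\le w(G)$ --- the ``trivial'' bound and the Proposition~3.4 bound (``$2^\tau\subseteq G$ hence $w(G)\ge\tau$'') point in the same direction. What you actually need is the reverse inequality $w(G)\le\chi(G)$, which holds for compact groups for a different reason: taking a neighborhood base $\{U_\alpha:\alpha<\chi(G)\}$ at $e$ and closed normal $G_\delta$ subgroups $N_\alpha\subseteq U_\alpha$, the diagonal map $G\to\prod_\alpha G/N_\alpha$ is a topological embedding into a product of $\chi(G)$ metrizable groups. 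The equality $w(G)=\chi(G)$ for compact groups is standard and the paper takes it for granted, so this does not affect the validity of your deduction.
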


This answers Question~5.4(a) from \cite{CR2}.

\section
{On subgroups of $g$-sequential groups}

We now return to \Arh's problem: is the variety $\fV$ generated by free
topological groups on metrizable spaces proper? If there exist
real-measurable cardinals, the answer is no (Theorem~2.13). The following
approach to a possible positive solution was suggested in \cite{MNPS}.

To prove that the variety $\fV$ coincides with the class of all topological
groups, it suffices to show that $\fV$ contains the free topological group
$F(X)$ for every Tikhonov space $X$. Embed $X$ in a product $P$ of metric
spaces so that every continuous pseudometric on $X$ extends to a continuous
pseudometric on $P$. Assume that the following assertions are true:
\begin{itemize}
\item[(H)] Any product of metric spaces is $f$-sequential (Definition~1.4).
\item[(F)] If $Y\sbs Z$ and every continuous pseudometric on $Y$ extends
to a continuous pseudometric on $Z$, then the natural map $F(Y)\to F(Z)$ is a
topological embedding.
\end{itemize}
The assertion (F) was formulated in the Introduction in an equivalent form
involving fine uniformities.
It is easy to see that the free topological group $F(Y)$ is $g$-sequential
if and only if the space $Y$ is $f$-sequential, hence (H) implies that
the group $F(P)$ is $g$-sequential. Now (F) implies that $F(X)$ is a
subgroup of a $g$-sequential group and hence $F(X)\in \fV$.

In this argument we tacitly used the equivalence of the conditions
1--3 of Definition~1.2, which is Theorem~3.7 in \cite{MNPS}. For the
reader's convenience we reproduce the proof. If $X$ is metrizable (or just
$f$-sequential), then every sequentially continuous homomorphism $F(X)\rar
G$ is continuous, since its restriction to $X$ is continuous. Thus free
groups on metrizable spaces have the property 1 of Definition~1.2,
and so do their quotient groups, since the property 1 is preserved by
quotients. This means that 3~ $\Rightarrow$~1. The implication
1~$\Rightarrow$~2 is clear. To prove that 2~$\Rightarrow$~3,
note that every topological space $X$ can be represented as the image
of a metric space $M$ under a continuous map $f:M\rar X$ such that every
convergent sequence in $X$ is the image of a convergent sequence in $M$:
just take for $M$ the disjoint sum of sufficiently many convergent
sequences. If $G$ is a topological group and $f:M\rar G$ is as above, then
the homomorphism $\bar f:F(M)\rar G$ which extends $f$ is quotient if and
only if $G$ does not admit a strictly finer group topology with the same
convergent sequences. Hence 2~$\Rightarrow$~3.

Now let us discuss the assertions~(H) and~(F). If there are sequential
cardinals, then (H) cannot be true: the product of a family $\ga$ of
non-trivial metric spaces is not $f$-sequential if $\Card(\ga)$ is
sequential. We prove that if there are no sequential cardinals, then (H)
is true. More generally,
the product of a family $\ga$ of metric spaces is
$f$-sequential if $\Card(\ga)$ is not sequential (Theorem~4.4). For products
of {\it separable\/} metric spaces this was proved in \cite[theorem 1.5]{ACh}. 
In the non-separable case we apply the technique of \cite{U4}.

As far as (F) is concerned, this is the main result of \cite{Sip, S2}.
Whether or not one shares
the opinion of V.\,Pestov that
``Sipacheva's proofs gradually became commonly recognized as correct"
\cite[p.~108]{CHR}, it would
be desirable to find an independent proof of the fact that under some
additional set-theoretic assumptions every free topological group $F(X)$ is
a subgroup of a $g$-sequential group. To prove that the variety $\fV$
coincides with the class of all topological groups, it would actually
suffice to prove the last assertion only for some very special spaces $X$.
Call a topological space X {\it ultrasimple\/} if it is either (1) discrete
or (2)
contains exactly one non-isolated point $p$, and the trace of the filter of
neighbourhoods of $p$ on $X\setminus\{p\}$ is an ultrafilter. Call a space
{\it simple\/} if it is a disjoint sum of ultrasimple spaces. It is easy to
see that every topological space is an image of a simple space under a
quotient map. It follows that a variety $\fW$ of topological groups contains
all topological groups if and only if $F(X)\in \fW$ for every simple space
$X$. Thus to prove that the variety $\fV$ contains all topological groups
it would suffice to show that the group $F(X)$ is a subgroup of a
$g$-sequential group for every simple space $X$. This observation motivates
the following question:

\begin{question}
\label{4.1}
Let $X$ be a simple space. Is the group $F(X)$ isomorphic to a topological
subgroup of $F(M)$ for some metric space $M$?
\end{question}

Theorem~2.13 implies that the answer is no if there are large cardinals, but
it is not clear if the answer can be consistently yes. The question seems to
be closely related to the problem considered in \cite{DW}.

The question remains open whether every topological group is a subgroup of
a $g$-sequential group (assuming there are no $g$-sequential cardinals). We
obtain some partial results in this direction.
We prove that the group $\Aut I^\tau$ of all self-homeomorphisms of a
Tikhonov cube $I^\tau$ is $g$-sequential if and only if the
cardinal $\tau$ is not $g$-sequential (Theorem~4.11). Similarly, the unitary
group $U(H)$ of a Hilbert space $H$ is $g$-sequential if and only if the
weight of $H$ is not $g$-sequential. These results do not answer the
above-mentioned question, since not every topological group can be embedded
in a unitary group $U(H)$, and it is not known whether every topological
group can be embedded in a group of the form $\Aut I^\tau$.

We use the techniques of \cite{U4} to prove Theorems~\ref{t:4.4} and ~\ref{t:4.11}. Our
proofs
depend on the notion of the functional tightness of a topological space, due
to A.V.\Arh\ \cite{A3}. A function $f:X\rar Y$ is
{\it $\omega$-continuous\/}
if for every countable subset $A\sbs X$ the restriction $f|A:A\rar Y$ is
continuous. A space $X$ has {\it countable functional tightness\/} if every
$\omega$-continuous function $f:X\rar \R$ is continuous. Recall that a space
$X$ has {\it countable tightness\/} if for any $x\in X$ and $A\sbs X$ such
that $x\in \bar A$ (where the bar denotes the closure) there is a countable
$B\sbs A$ such that $x\in\bar B$. Clearly every space of countable tightness
has countable functional tightness. The converse is not true. For example,
a power $\R^\tau$ of the real line has countable tightness if and only if
$\tau=\omega$, and
it has countable functional tightness if and only if the cardinal $\tau$ is
non-Ulam-measurable \cite{U4}. The last assertion is a special case of the main
result of \cite{U4}: the function space $C_p(X)$ has countable functional
tightness if and only if $X$ is realcompact. Here $C_p(X)$ denotes the space
of continuous functions on $X$, endowed with the topology of pointwise
convergence. Recall that the space $C_p(X)$ has countable tightness if and
only if all finite powers of $X$ are Lindel\"of \cite[theorem 4.1.2]{A4}.
In particular, the space $C_p(X)$ has countable tightness if $X$ is compact.

The proof of the main theorem in \cite{U4} was based on a lemma which
we reproduce here for convenience. For a subset $A$ of a topological space
$X$ denote by $[A]_\omega$ the {\it $\omega$-closure\/} of $A$, that is, the
set $\bigcup\{\bar B: B\sbs A \text{ and $B$ is countable}\}$.

\begin{lemma}[\cite{U4}]
\label{l:4.2}
Let $Y$ be a space of countable functional
tightness, and let $p:Y\rar X$ be a surjective continuous map. Suppose there
is a base $\sB$ for $Y$ such that for every $U\in \sB$ there is an open set
$V$ in $X$ such that
\begin{equation}
p(U)\sbs V\sbs [p(U)]_\omega.                       \tag {$\ast$}
\end{equation}
Then $X$ has countable functional tightness.
\end{lemma}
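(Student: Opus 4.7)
The plan is to lift the hypothetical $\omega$-continuous function up to $Y$, continue it there, and then push the continuity back down through the base $\sB$ using the sandwich condition $(\ast)$. Concretely, take an $\omega$-continuous function $f:X\to\R$. The first step is to verify that $g:=f\circ p:Y\to\R$ is also $\omega$-continuous: for any countable $A\sbs Y$ the image $p(A)$ is countable in $X$, so $f|p(A)$ is continuous, and then $g|A=(f|p(A))\circ (p|A)$ is continuous as a composition. Since $Y$ has countable functional tightness, $g$ is continuous on $Y$.

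Now fix an arbitrary point $x\in X$ and an arbitrary open set $W\sbs\R$ containing $f(x)$; I want to produce an open neighborhood $V$ of $x$ in $X$ with $f(V)\sbs W$. Pick a smaller open set $W'\ni f(x)$ in $\R$ whose closure $\overline{W'}$ is contained in $W$ (any shorter open interval around $f(x)$ works). Choose $y\in p\obr(x)$. Because $g$ is continuous at $y$ and $\sB$ is a base for $Y$, I can select $U\in\sB$ with $y\in U$ and $g(U)\sbs W'$, i.e.\ $f(p(U))\sbs W'$. Applying the hypothesis $(\ast)$ to this $U$, I obtain an open set $V\sbs X$ satisfying
\[
p(U)\sbs V\sbs [p(U)]_\omega.
\]
In particular $x=p(y)\in p(U)\sbs V$, so $V$ is an open neighborhood of $x$.

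It remains to show $f(V)\sbs W$, and this is where the $\omega$-closure is put to work. Take any $v\in V$; by definition of the $\omega$-closure there exists a countable set $B\sbs p(U)$ with $v\in\overline{B}$. Since $f$ is $\omega$-continuous, its restriction to the countable set $B\cup\{v\}$ is continuous, hence $f(v)\in\overline{f(B)}$ in $\R$. But $f(B)\sbs f(p(U))\sbs W'$, so $f(v)\in\overline{W'}\sbs W$. Thus $f(V)\sbs W$, which shows $f$ is continuous at $x$. Since $x\in X$ was arbitrary, $f$ is continuous, and $X$ has countable functional tightness.

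I expect the only genuinely delicate point to be the closure step: $\omega$-continuity of $f$ only gives $f(v)\in\overline{f(B)}$, so one must sandwich $W'$ inside $W$ with $\overline{W'}\sbs W$ at the outset. Once this standard buffer is inserted, the rest is a routine stitching together of the hypothesis on $\sB$ with the definition of $[\cdot]_\omega$.
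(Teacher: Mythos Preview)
Your proof is correct and follows essentially the same route as the paper: lift $f$ through $p$, use countable functional tightness of $Y$ to get continuity of $f\circ p$, pick a basic $U\in\sB$, and exploit the sandwich $(\ast)$ together with $\omega$-continuity of $f$ on the $\omega$-closure. The only cosmetic difference is that the paper works directly with an open interval $O$ and concludes $f(V)\sbs\bar O$, whereas you insert an explicit buffer $W'$ with $\overline{W'}\sbs W$; these are equivalent formulations of the same regularity trick.
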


\begin{proof} We must show that every $\omega$-continuous function $f:X\rar
\R$ is continuous. Let $x\in X$, and let $O$ be an open interval in $\R$
containing $f(x)$. Pick $y\in Y$ such
that $p(y)=x$. The function $fp:Y\rar \R$ is $\omega$-continuous and hence
continuous, so there is $U\in \sB$ such that $y\in U$ and $fp(U)\sbs O$.
Since $f$ is $\omega$-continuous, it maps the $\omega$-closure
$[p(U)]_\omega$ of the set $p(U)$ into $\bar O$. Pick an open $V$ in $X$ so
that $p(U)\sbs V\sbs [p(U)]_\omega$. Then $V$ is a neighbourhood of $x$ and
$f(V)\sbs \bar O$. Thus $f$ is continuous.
\end{proof}

This lemma leads to the following sufficient condition for a space to have
countable functional tightness.
\begin{proposition} 
\label{p:4.3}
Let $X$ be a topological space, and let
$\{d_\a:\a\in A\}$ be a family of pseudometrics on $X$ generating the
topology of $\sT$ of $X$. Assume that for any two points $x,y\in X$ and any
finite set $F$ of free ultrafilters on $A$ there exists a sequence
$y_0,y_1,\dots \in X$ such that $\lim y_n=y$ and for every $n\in \omega$
the set
$$
\{\a\in A: d_\a(y_n,x)=0\}
$$
is in the filter $\bigcap F$.
Then $X$ has countable functional tightness.
\end{proposition}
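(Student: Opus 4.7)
The plan is to apply Lemma~\ref{l:4.2}. For each free ultrafilter $\xi$ on $A$, define the bounded ultrafilter pseudometric
\[
d_\xi(u,v):=\lim_{\a\to\xi}\min\{d_\a(u,v),1\}
\]
on $X$, let $\sT^\ast$ be the topology on the underlying set of $X$ generated by the enlarged family $\{d_\a\}_{\a\in A}\cup\{d_\xi\}_{\xi\in\beta A\setminus A}$, and set $Y:=(X,\sT^\ast)$. Since $\sT^\ast\supset\sT$, the identity map $p=\mathrm{id}_X:Y\to X$ is a continuous surjection.

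To verify the base condition $(\ast)$ of Lemma~\ref{l:4.2}, take $\sB$ to be the canonical base of $\sT^\ast$ whose typical element is
\[
U = \{z\in X : d_\a(z,x)<\e\text{ for }\a\in F_0,\ d_\xi(z,x)<\e\text{ for }\xi\in F\},
\]
for $x\in X$, $\e>0$, finite $F_0\sbs A$, and finite $F\sbs\beta A\setminus A$. Set $V:=\{z\in X:d_\a(z,x)<\e\text{ for all }\a\in F_0\}$, a $\sT$-open set containing $U$. To see $V\sbs[U]_\omega$ (the $\omega$-closure taken in $\sT$), pick $y\in V$ and apply the hypothesis to $x$, $y$, and $F$: the resulting sequence $y_n\to y$ in $\sT$ satisfies $\{\a:d_\a(y_n,x)=0\}\in\bigcap F$, which forces $d_\xi(y_n,x)=0<\e$ for every $\xi\in F$. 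Since $y\in V$ and $y_n\to y$ in $\sT$, we also have $d_\a(y_n,x)<\e$ for all $\a\in F_0$ and sufficiently large $n$, so $y_n\in U$ eventually. Thus $y$ lies in the $\sT$-closure of the countable set $\{y_n\}\sbs U$, so $y\in[U]_\omega$, which gives condition $(\ast)$ with this choice of $V$.

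The main obstacle is to verify that $Y=(X,\sT^\ast)$ itself has countable functional tightness, since $\sT^\ast$ is generated by pseudometrics indexed by the potentially huge set $\beta A$. The intended route is to exploit that, on any countable $B\sbs X$, each restriction $d_\xi|_{B\ti B}$ is an ultrafilter limit of the countable family of functions $\{\a\mapsto d_\a(u,v):u,v\in B\}$, so the trace topology $\sT^\ast|_B$ is controlled by countably much scalar data and, in particular, an $\omega$-continuous function $f:Y\to\R$ restricts to a continuous function on each countable subset of a countably-generated pseudometric subtopology. A diagonal/bootstrap argument then lifts continuity on countable subsets to continuity on all of $Y$. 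Once countable functional tightness of $Y$ is established, Lemma~\ref{l:4.2} delivers the proposition.
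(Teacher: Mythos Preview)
Your setup---defining the ultrafilter pseudometrics $d_\xi$, passing to the finer topology $\sT^\ast$, and verifying condition $(\ast)$ of Lemma~\ref{l:4.2}---matches the paper exactly. The gap is in the last step, which you yourself flag as ``the main obstacle'': you have not shown that $Y=(X,\sT^\ast)$ has countable functional tightness. Your sketch is essentially circular. Saying that an $\omega$-continuous $f:Y\to\R$ is continuous on every countable subset is just the definition of $\omega$-continuity; the entire content of countable functional tightness is the passage from ``continuous on countable subsets'' to ``continuous'', and your ``diagonal/bootstrap argument'' is a placeholder for precisely the step that needs an idea. The observation that $\sT^\ast|_B$ is determined by countably many real parameters when $B$ is countable does not by itself produce a global argument.

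The paper resolves this with a different and much sharper observation: $Y$ in fact has countable \emph{tightness}, not merely countable functional tightness. For each fixed $x\in X$, the map $f_x:Y\to C_p(\beta A)$ given by $f_x(y)(q)=d_q(x,y)$ is defined so that $x\in\overline{E}^{\,\sT^\ast}$ iff $0\in\overline{f_x(E)}$ in $C_p(\beta A)$. Since $\beta A$ is compact, $C_p(\beta A)$ has countable tightness (Arhangel'ski\u\i's theorem that $C_p(K)$ has countable tightness for compact $K$), and this transfers back to $Y$. That is the missing idea you need to complete the proof.
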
 

\begin{proof} In virtue of
Lemma~\ref{l:4.2}, it suffices to construct a finer topology $\sT'$ on $X$ such that
the space $Y=(X,\sT')$ has countable tightness and the identity map
$Y\rar X$ has the property described in the lemma. We may assume that the
pseudometrics $d_\a$ are $\le 1$: otherwise replace $d_\a$ by
$\min(d_\a,1)$. Let $\beta A$ be the set of all
ultrafilters on $A$, considered as a compact space (the \v Cech --- Stone
compactification of the discrete space $A$). We identify $A$ with its image
under the natural map $A\to\beta A$. For each ultrafilter $p\in
\beta A$ define a pseudometric $d_p$ on $X$ as the $p$-limit of
pseudometrics $d_\a$. In other words, if $x,y\in X$, then
$d_p(x,y)$ is determined by the requirement that the function $p\mapsto
d_p(x,y)$ must be continuous on $\beta A$. Let $\sT'$ be the topology
on $X$ generated by all pseudometrics $d_p$, $p\in \beta A$, and let
$Y=(X,\sT')$. Let us check that the identity map $Y\rar X$ satisfies
the condition of the lemma.

For every finite set $K\sbs\beta A$ let $d_K=\max\{d_p:p\in K\}$. The
collection $\sB$ of all open $d_K$-balls, where $K$ runs over the set of
all finite subsets of $\beta A$, is a base for $Y$. Thus it suffices to
check that for every $U\in \sB$ there exists an open $V$ in $X$ such that
the condition $(*)$ of the lemma holds.

Let $U\in \sB$. Pick a finite set $K\sbs \beta A$, a point $x\in X$ and $\e>
0$ so that $U=\{z\in X: d_K(z,x)<\e\}$.
Let $L=K\cap A$, and let $V=\{z\in X: d_L(z,x)<\e\}$.
Then $V$ is open in $X$. We show that $V$ is as required: $U\sbs V \sbs
[U]_\omega$, where $[U]_\omega$ denotes the $\omega$-closure of $U$ in $X$.

The inclusion $U\sbs V$ is obvious. To prove that $V\sbs [U]_\omega$, it
suffices to show that every
$y\in V$ is the limit of a sequence $y_0, y_1\dots\in U$. Let $F=K\setminus
L$ be the set of all free ultrafilters in $K$, and let $s=\bigcap F$.
By the assumption, there is a sequence $y_0, y_1\dots\in X$ converging to
$y$ and such that for any $n\in \omega$ the set $\{\a\in A: d_\a(y_n,x)=0\}$
belongs to $s$. Then $d_p(y_n,x)=0$ for every ultrafilter $p\in F$, hence
$d_F(y_n,x)=0$ and $d_K(y_n,x)=d_L(y_n,x)$. Since $y=\lim y_n\in V$, we have
$d_K(y_n,x)=d_L(y_n,x)<\e$ if $n$ is sufficiently large. This means that
$y_n\in U$. Thus $V\sbs [U]_\omega$.

In virtue of Lemma~\ref{l:4.2}, the proof will be complete if we show that the space
$Y$ has countable functional tightness. We prove more: the tightness of $Y$
is countable. Let $x\in X$. Consider the map $f_x:X\rar C_p(\beta A)$,
defined by $f_x(y)(q)=d_q(x,y)$. It follows from the definition of the
topology $\sT'$ of the space $Y$ that $x$ is in the $\sT'$-closure of a
subset $E$
of $X$ if and only if the zero function $f_x(x)$ is in the closure of the
set $f_x(E)$ in the space $C_p(\beta A)$. Since the tightness of $C_p(\beta
A)$ is countable, it follows that the tightness of $Y$ is also countable.
\end{proof}

We now apply Proposition~\ref{p:4.3} to show that certain spaces have countable
functional tightness. Recall that a space $X$ is {\it bi-sequential\/}
\cite{Mi2} if any ultrafilter on $X$ converging to a point $x\in X$
contains a filter with a countable basis converging to the same point.
All first-countable spaces are bi-sequential.
An onto map $f:X\rar Y$ is {\it bi-quotient\/} \cite{Mi2}
if for any
$y\in Y$ and any cover $\sW$ of $f^{-1}(y)$ by open subsets of $X$
there exists a finite subfamily $\sE\sbs\sW$ such that $y$ belongs
to the interior of $f(\bigcup\sE)$. Bi-sequential spaces are precisely
the images of metric spaces under bi-quotient maps \cite{Mi3}. Bi-quotient
maps are preserved by arbitrary products \cite{Mi2} (see \cite{U2} for
a similar assertion concerning so-called tri-quotient maps).

%

\begin{theorem} 
\label{t:4.4}
Let $\{X_\a:\a\in A\}$ be a family of bi-sequential
spaces, and let $X=\prod_{\a\in A} X_\a$.
\begin{enumerate}
\item If $\Card(A)$ is non-Ulam-measurable, the product $X$ has countable
functional tightness.
\item If $\Card(A)$ is non-sequential, the product $X$ is $f$-sequential.
\end{enumerate}
\end{theorem}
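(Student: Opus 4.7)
The strategy is to reduce both parts to the case where each factor is metric via the bi-quotient representation of bi-sequential spaces. For each $\a$ pick a metric space $(M_\a,\rho_\a)$ and a bi-quotient surjection $f_\a\colon M_\a\to X_\a$; set $M=\prod_\a M_\a$ and $f=\prod_\a f_\a\colon M\to X$. By the cited preservation of bi-quotient maps under arbitrary products, $f$ is bi-quotient, hence in particular a quotient map. Equip $M$ with the pseudometrics $d_\a(u,v):=\min(\rho_\a(u_\a,v_\a),1)$, indexed by $\a\in A$, which manifestly generate the product topology of $M$.

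For part (1), I will verify the sequence-lifting hypothesis of Proposition~\ref{p:4.3} for $(M,\{d_\a\})$. Fix $u,v\in M$ and a finite family $F=\{p_1,\dots,p_k\}$ of free ultrafilters on $A$. Non-Ulam-measurability of $|A|$ forces each $p_i$ to be non-$\omega_1$-complete, so each $p_i$ admits a countable partition $A=\bigsqcup_n B_n^{(i)}$ with $B_n^{(i)}\notin p_i$. Refining these $k$ partitions jointly yields a countable partition $A=\bigsqcup_n C_n$ with $C_n\notin p_i$ for every $i$. The sets $T_n:=\bigsqcup_{m\ge n}C_m$ are then decreasing, have empty intersection, and satisfy $A\setminus T_n=C_1\cup\dots\cup C_{n-1}\notin p_i$ (finite union of sets outside $p_i$), hence $T_n\in\bigcap F$. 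Define $u_n\in M$ by $(u_n)_\a=u_\a$ if $\a\in T_n$ and $(u_n)_\a=v_\a$ otherwise; since $\bigcap_n T_n=\emptyset$, each coordinate eventually stabilizes at $v_\a$, so $u_n\to v$, while $\{\a:d_\a(u_n,u)=0\}\supseteq T_n\in\bigcap F$. Proposition~\ref{p:4.3} now yields countable functional tightness of $M$. To transfer this to $X$, suppose $g\colon X\to\R$ is $\omega$-continuous; for any countable $B\subseteq M$ the image $f(B)\subseteq X$ is countable, so $g|_{f(B)}$ is continuous, hence $(g\circ f)|_B=g|_{f(B)}\circ f|_B$ is continuous. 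Thus $g\circ f$ is $\omega$-continuous on $M$, hence continuous, hence $g$ is continuous because $f$ is a quotient.

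For part (2), the identical quotient transfer works with sequential continuity in place of $\omega$-continuity (a sequentially continuous $g$ composed with continuous $f$ remains sequentially continuous, and continuity of $g\circ f$ descends through the quotient $f$), so the task reduces to showing that $M=\prod M_\a$ is $f$-sequential whenever each $M_\a$ is metric and $|A|$ is non-sequential. For separable factors this is precisely [ACh, Theorem~1.5]. The main obstacle is the extension to arbitrary (possibly non-separable) metric factors. My plan is to handle it by a closing-off argument adapted from [U4]: given a sequentially continuous $g\colon M\to\R$, a point $v\in M$, and $\e>0$, build recursively a countable set $A_0\subseteq A$ and separable closed subspaces $N_\a\subseteq M_\a$ for $\a\in A_0$ so that in a suitable neighborhood of $v$ the value of $g$ depends, up to $\e$, only on the $A_0$-coordinates and their restriction to $\prod_{\a\in A_0}N_\a$, whereupon the separable case applies to yield continuity at $v$. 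The delicate point is controlling how sequential continuity of $g$ propagates through the transfinite recursion of the closing-off and through the projection to separable subfactors, since bi-sequentiality (and here even metrizability) of the factors does not automatically make countable subspaces of the full product sequential.
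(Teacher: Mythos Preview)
Your treatment of part~(1) is correct and matches the paper's argument: reduce to metric factors via bi-quotient maps, then apply Proposition~\ref{p:4.3} using a decreasing sequence in $\bigcap F$ with empty intersection (available because non-Ulam-measurability kills $\omega_1$-completeness of each free ultrafilter).

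For part~(2), however, you have taken a detour that leaves a real gap. After reducing to metric factors you try to prove $f$-sequentiality of $\prod M_\a$ directly, and then confront the non-separable case with an unfinished closing-off scheme whose ``delicate point'' you yourself flag. The paper avoids this entirely by routing part~(2) through part~(1). Since a non-sequential cardinal is a fortiori non-Ulam-measurable, part~(1) already gives that $M$ has countable functional tightness; hence to prove a sequentially continuous $g\colon M\to\R$ is continuous it suffices to show $g$ is $\omega$-continuous. But $\omega$-continuity is checked on countable subsets, and any countable $B\subseteq M$ lies inside a subproduct $\prod_\a N_\a$ with each $N_\a\subseteq M_\a$ countable, hence separable metric. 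On that separable subproduct the cited result of \cite{ACh} (non-sequential $|A|$ implies $f$-sequentiality for separable metric factors) gives continuity of $g$ restricted there, in particular on $B$. Thus $g$ is $\omega$-continuous, and part~(1) finishes the job.

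So the missing idea is precisely this: do not attack $f$-sequentiality head-on in the non-separable case; instead use countable functional tightness to downgrade the target from continuity to $\omega$-continuity, which immediately localizes to separable subproducts where \cite{ACh} applies. Your closing-off plan is unnecessary, and as stated it is not a proof.
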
 

\begin{proof}
The theorem reduces to the case when all spaces
$X_\a$ are metrizable. Indeed, for every $\a\in A$ there exist a metric
space $Y_\a$ and a bi-quotient onto map $f_\a:Y_\a\rar X_\a$ \cite{Mi3}.
The product $\prod_\a f_\a:\prod_\a Y_\a\rar \prod_\a X_\a$ is bi-quotient
\cite{Mi2} and hence quotient. Since spaces of countable tightness and
$f$-sequential spaces are preserved by quotient maps, it suffices to show
that the product $\prod_\a Y_\a$ is of countable tightness or
$f$-sequential, respectively.

So assume that the spaces $X_\a$ are metrizable.
For every $\a\in A$ let $\rho_\a$ be a compatible
metric on $X_\a$. Let $d_\a$ be the pseudometric on $X$ defined by
$d_\a(\{x_\a\},\{y_\a\})=\rho_\a(x_\a,y_\a)$. The family
$D=\{d_\a:\a\in A\}$ generates the topology of $X$. We show that if
$\Card(A)$ is non-Ulam-measurable, then the family $D$ satisfies the condition
of Proposition~\ref{p:4.3}.

Let $x,y\in X$, and let $F$ be a finite set of free ultrafilters on $A$.
Since the cardinal $\Card(A)$
is assumed to be non-Ulam-measurable, free ultrafilters on $A$ are not
$\omega_1$-complete. Hence there exists a decreasing sequence
$B_0\supset B_1\supset\dots\in \bigcap F$ of subsets of
$A$ such that $\bigcap B_n=\emptyset$.
Define $y_n\in X$
by $y_{n,\a}=y_\a$ if $\a\in A\setminus B_n$ and $y_{n,\a}=x_\a$ if $\a\in
B_n$. Since $\bigcap B_n=\emptyset$, the sequence $\{y_n\}$ converges to
$y$. For every $n\in \omega$ the set $\{\a\in A:d_\a(y_{n,\a},x_\a)=0\}$
contains $B_n$ and hence belongs to $\bigcap F$. Proposition~\ref{p:4.3} implies
that $X$ has countable functional tightness.

The second part of the theorem reduces to the first. Assume that the
cardinal $\Card(A)$ is not
sequential. We must prove that every sequentially continuous function
$f:X\rar \R$ is continuous. In virtue of the first part of the theorem, it
suffices to show that $f$ is $\omega$-continuous. Since every countable
subset of $X$ lies in a product $\prod Y_\a$, where $Y_\a$ is a countable
subset of $X_a$ for every $\a\in A$, we may assume without loss of
generality that all spaces $X_\a$ are separable (or even countable), and in
this case the result is known \cite[Theorem 1.5]{ACh}. For completeness we
sketch the proof.

For each $\a\in A$ pick a point $z_\a\in X_\a$ , and let $S$ be the
$\Si$-product of the family $\{X_\a\}$ with the base-point $\{z_\a\}$,
$$
S=\{\{x_\a\}\in X:x_\a=z_\a\text{ for all but countably many }\a\in A\}.
$$
Then $S$ is a Fr\'echet space, so every sequentially continuous function
$f:X\rar \R$ is continuous on $S$, and hence the restriction of $f$ to $S$
depends on countably many
coordinates \cite{A5}. The assumption that the cardinal $\tau=\Card(A)$ is
not sequential means that any sequentially continuous function on the cube
$2^\tau$ is continuous. Since $X$ is covered by copies $F$ of this cube
such that the intersection $F\cap S$ is dense in $F$, it follows that $f$
depends on countably many coordinates and hence is continuous.
\end{proof}

Sequentially continuous functions on products were first studied by
S.\,Mazur \cite{Ma}, who proved that the first sequential cardinal (if it
exists) is weakly inaccessible (that is, regular limit).
Theorem~\ref{t:4.4} implies that if there are no sequential cardinals, then every
Tikhonov space is a subspace of an $f$-sequential Tikhonov space.
Conversely, if there exists a sequential cardinal $m$, then the cube
$I^m$ cannot be embedded in an $f$-sequential Tikhonov space $X$. Indeed,
if $I^m\sbs X$, then there is a retraction $r:X\rar I^m$. Let $f:I^m\rar
\R$ be a sequentially continuous discontinuous function. Then
the composition $fr:X\rar \R$ is sequentially continuous but not continuous,
so $X$ is not $f$-sequential.

We now establish a counterpart of Theorem~\ref{t:4.4} for topological groups
(Theorem~\ref{t:4.11}). We need some preparations.

A homomorphism $f:G\rar H$ of topological groups {\it splits\/} if there
exists a homomorphism $s:H\rar G$ which is right inverse to $f$, that is
$fs=\operatorname{id}_H$. In this case $G$ is a semidirect product of
$H$ and the kernel of $f$. An inverse system $S=\{X_\a, p_\a^\beta:\a,\beta
<\tau\}$ of topological spaces is {\it continuous\/} if for every limit
ordinal $\a<\tau$ the space $X_\a$ is naturally homeomorphic to the inverse
limit of the system $\{X_\beta:\beta<\a\}$.
\begin{definition} 
\label{4.5}
Let $\{H_\a: \a< \tau\}$ be
a well-ordered family of topological groups.
We say that a topological group $G$ is an
{\it iterated semidirect product\/} of the groups
$H_\a$, and write $G=\bigotimes_{\a<\tau} H_\a$, if there exists a
continuous inverse system
$$
\{e\}=G_0\lar\dots\lar G_\a\lar G_{\a+1}\lar\dots\lar G
$$
of topological groups $G_\a$ and homomorphisms $p_\a^\beta:G_\beta\rar G_\a$
for $\a<\beta$ such that $G=\varprojlim G_\a$ and for every $\a<\tau$ the
homomorphism $p_\a^{\a+1}$ splits, and its kernel is isomorphic to $H_\a$.
\end{definition}

In the notation of Definition~\ref{4.5}, if we pick for every $\a<\tau$ a
homomorphism $s_\a:G_\a\rar G_{\a+1}$ which is right inverse to
$p_\a^{\a+1}$, then each $H_\a$ can be identified with a subgroup of $G$,
and $G$ can be identified with the product
$\prod H_\a$ as a topological space but not as a group. If $H_\a'$ is a
closed subgroup of $H_\a$ for every $\a<\tau$, the product $\prod H_\a'$
is a subgroup of $G$ if and only if $H_\a'$ normalizes $H_\beta'$ whenever
$\a<\beta$. If $B$ is a set of ordinals $<\tau$, let $G_B=\prod H_\a'$,
where
$H_\a'=H_\a$ for $\a\in B$ and $H_\a'=\{e\}$ otherwise. Then $G_B$ is a
subgroup of $G$.

Iterated semidirect products appear naturally as groups of
self-homeo\-mor\-phisms of products of compact spaces.
For a compact space $K$ let $\Aut K$ be the group of all
self-homeomorphisms of $K$, equipped with the compact-open topology.

\begin{definition} 
\label{4.6}
Let $X$ and $Y$ be compact spaces, and let $p:X\times Y\rar X$
be the
projection. We say that a self-homeomorphism $g\in \Aut (X\times Y)$ of
the product $Z=X\times Y$ is {\it $X$-special\/} if there exists a
self-homeomorphism $h\in \Aut X$ such that $p(gz)=h(pz)$ for every $z\in Z$.
\end{definition} 

A self-homeomorphism $g\in\Aut Z$ is $X$-special if and only if it permutes
fibers of $p$.

For a topological group $G$ and a compact space $X$ let $C(X, G)$ be the
topological group
of all continuous maps from $X$ to $G$, endowed with the compact-open
topology.

\begin{lemma}
\label{l:4.7}Let $X$ and $Y$ be compact spaces. The topological
group $G$ of all $X$-special self-homeo\-mor\-phisms of $X\times Y$
is the topological semidirect
product of the groups $\Aut X$ and $C(X, \Aut Y)$. The natural homomorphism
$G\rar \Aut X$ has a natural right inverse.
\end{lemma}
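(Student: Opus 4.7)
The plan is to exhibit an explicit bijection between $G$ and the set $\Aut X \times C(X,\Aut Y)$, verify that under this bijection the group law on $G$ corresponds to the semidirect product law, and then check that the bijection is a homeomorphism.

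First I would set up the bijection. If $g\in G$ is $X$-special with associated $h\in\Aut X$, then writing $g(x,y)=(h(x),\phi_g(x,y))$, the requirement that $g$ be a bijection with $X$-special inverse forces $\phi_g(x,\cdot):Y\to Y$ to lie in $\Aut Y$ for every $x\in X$. Define
\[
\Phi:G\to\Aut X\times C(X,\Aut Y),\qquad \Phi(g)=(h,\psi_g),
\]
where $\psi_g(x)=\phi_g(x,\cdot)$. The inverse sends $(h,\psi)$ to the map $(x,y)\mapsto(h(x),\psi(x)(y))$. The natural homomorphism $\pi:G\to\Aut X$ is $g\mapsto h$, and the natural right inverse $s:\Aut X\to G$ is $s(h)(x,y)=(h(x),y)$, i.e.\ $s(h)=\Phi^{-1}(h,x\mapsto\operatorname{id}_Y)$.

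Next I would verify the algebraic claim. Direct computation gives, for $g_i$ corresponding to $(h_i,\psi_i)$,
\[
g_1g_2(x,y)=\bigl(h_1h_2(x),\,\psi_1(h_2(x))\,\psi_2(x)(y)\bigr),
\]
so $\Phi(g_1g_2)=(h_1h_2,\,(\psi_1\circ h_2)\cdot\psi_2)$, which is exactly the semidirect product law on $\Aut X\ltimes C(X,\Aut Y)$, where $\Aut X$ acts on $C(X,\Aut Y)$ by $h\cdot\psi=\psi\circ h^{-1}$. In particular the kernel $K$ of $\pi$ consists of those $g$ with $h=\operatorname{id}_X$ and is carried onto $C(X,\Aut Y)$ by $\Phi$, while $s$ is a genuine splitting homomorphism.

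The main work is the topological statement: $\Phi$ and $\Phi^{-1}$ are continuous when $G\subset\Aut(X\times Y)$ and both factors carry the compact-open topology. This is an application of the exponential law for compact Hausdorff spaces. For the projection $g\mapsto h$: if $[K,U]$ is a subbasic neighbourhood of $h$ in $\Aut X$, then $[K\times Y,U\times Y]$ is a neighbourhood of $g$ in $\Aut(X\times Y)$ that lands in $\pi^{-1}([K,U])$. For $g\mapsto\psi_g$: a subbasic set in $C(X,\Aut Y)$ has the form $\{\psi:\psi(C)\subset[L,V]\}$ with $C\subset X$ compact and $L\subset Y$ compact, $V\subset Y$ open, and by the exponential law this corresponds to $\{g:\phi_g(C\times L)\subset V\}$, which is the trace on $G$ of an open set in $\Aut(X\times Y)$. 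Conversely, a basic open set in $\Aut(X\times Y)$ determined by a compact rectangle $C\times L$ and an open rectangle $U\times V$ pulls back, via $\Phi^{-1}$, to the intersection of $\pi^{-1}([C,U])$ and a set of the form $\{\psi:\psi(C)\subset[L,V]\}$. Together with continuity of $s$ (which is obvious from the formula), this shows $\Phi$ is a homeomorphism.

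The expected obstacle is the careful bookkeeping in this last step, namely showing that a neighbourhood base for $G$ inherited from the compact-open topology on $\Aut(X\times Y)$ matches the product of neighbourhood bases on $\Aut X$ and $C(X,\Aut Y)$ coming from their respective compact-open topologies; once one invokes compactness of $X$ and $Y$ and the exponential law $C(X\times Y,Y)\cong C(X,C(Y,Y))$, with the restriction to maps whose cross-sections are homeomorphisms, the rest is formal.
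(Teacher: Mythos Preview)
Your proposal is correct and follows essentially the same approach as the paper: identify the kernel of $\pi:G\to\Aut X$ with $C(X,\Aut Y)$ via the exponential law for compact spaces, and observe that $s(h)(x,y)=(h(x),y)$ is a natural splitting homomorphism. The paper's proof is terser---it simply cites the exponential law (Engelking~3.4.8) for the homeomorphism $K\cong C(X,\Aut Y)$ and leaves the algebraic verification implicit---whereas you spell out the semidirect-product multiplication and the subbasic continuity checks, but the substance is identical.
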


\begin{proof} The kernel $K$ of the natural homomorphism $q:G\rar \Aut X$
consists
of all $g\in \Aut (X\times Y)$ which leave invariant each fiber of the
projection $X\times Y\rar X$. Every such $g$ defines a map $X\rar \Aut Y$
in a natural way. The exponential law for function spaces
\cite[Theorem 3.4.8]{En} implies
that we thus obtain a homeomorphism between $K$ and
$C(X, \Aut Y)$. On the other hand, the homomorphism $q$ has a natural
right inverse
homomorphism $s:\Aut X\rar G$, defined by $s(f)(x,y)=(f(x), y)$.
\end{proof}

Now let $\{Y_\a:\a<\lambda\}$ be a well-ordered family of compact spaces.
Let $Y=\prod_{\a<\lambda} Y_\a$, and for
every $\a\le\lambda$ let $X_\a=\prod_{\beta<\a} Y_\a$.

\begin{definition} 
\label{4.8}Let the notation be as above.
We say that a self-homeomorphism $g\in \Aut Y$ is {\it special\/} if
it is $X_\a$-special
for each $\a<\lambda$ (if $Y$ is considered as the product $X_\a\times
\prod_{\beta\ge\a} Y_\beta$).
\end{definition}

Let $G$ be the group of all special $g\in
\Aut Y$. For every $\a<\lambda$ let $G_\a$ be the group of all special
$g\in \Aut X_\a$ (this notion is well-defined, since $X_\a$ is the product of
a well-ordered family of compact spaces). There are natural homomorphisms
$G\rar G_\a$ and $G_\beta\rar G_\a$ for $\a<\beta<\lambda$. We thus obtain
an inverse system of the groups $G_\a$ with the limit $G$.
Lemma~\ref{l:4.7} shows that each $G_{\a+1}$ is the topological semidirect
product of $G_\a$ and $C(X_\a, \Aut Y_\a)$, and the homomorphism
$G_{\a+1}\rar G_\a$ has a natural right inverse. This proves the following:

\begin{lemma}
\label{l:4.9}
With the notation as above, the group $G$ of all special self-homeomorphisms
of $Y$ is an iterated semidirect product of the groups
$C(X_\a, \Aut Y_\a)$, $\a<\lambda$. Each of the groups $C(X_\a, \Aut Y_\a)$
can be identified with a subgroup of $G$ in a canonical way.
\qed\end{lemma}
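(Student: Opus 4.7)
The plan is to verify directly that the natural inverse system $\{G_\a, p_\a^\beta\}_{\a\le\beta<\lambda}$, with bonding maps given by restriction of special self-homeomorphisms to the first $\a$ coordinates, fits Definition~\ref{4.5} with $H_\a=C(X_\a,\Aut Y_\a)$. Three things must be checked: (a) each successor bonding map $p_\a^{\a+1}$ splits with kernel canonically isomorphic to $C(X_\a,\Aut Y_\a)$; (b) the system is continuous at limit ordinals; and (c) its inverse limit is $G$.

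For (a), note that $X_{\a+1}=X_\a\ti Y_\a$, so Lemma~\ref{l:4.7} gives the topological semidirect product decomposition of the group of all $X_\a$-special self-homeomorphisms of $X_{\a+1}$ as $\Aut X_\a\ltimes C(X_\a,\Aut Y_\a)$, with the canonical splitting $h\mapsto((x,y)\mapsto (h(x),y))$. An $X_\a$-special $g\in\Aut X_{\a+1}$ lies in $G_{\a+1}$ precisely when it is $X_\beta$-special for every $\beta<\a$ as well, and by unwinding the definition this is equivalent to the requirement that $p_\a^{\a+1}(g)\in\Aut X_\a$ be $X_\beta$-special for every $\beta<\a$, i.e.\ $p_\a^{\a+1}(g)\in G_\a$. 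Restricting the decomposition of Lemma~\ref{l:4.7} to the preimage of $G_\a$ therefore yields $G_{\a+1}=G_\a\ltimes C(X_\a,\Aut Y_\a)$, with the desired splitting and kernel.

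For (b) and (c), at a limit ordinal $\a\le\lambda$ we have $X_\a=\varprojlim_{\beta<\a} X_\beta$ in the category of compact Hausdorff spaces. Given a coherent family $(g_\beta\in G_\beta)_{\beta<\a}$, the universal property produces a unique continuous self-map $g$ of $X_\a$; the same construction applied to the inverses $g_\beta^{-1}$ provides an inverse, so $g\in\Aut X_\a$. Since each $g_\beta$ is $X_\gamma$-special for $\gamma<\beta$, the limit $g$ is $X_\gamma$-special for every $\gamma<\a$, hence $g\in G_\a$; conversely, each $g\in G_\a$ restricts coherently. Taking $\a=\lambda$ identifies the underlying set of $G$ with $\varprojlim_{\a<\lambda} G_\a$. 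That this bijection is a homeomorphism follows from the fact that the compact-open topology on the homeomorphism group of a compact Hausdorff inverse limit agrees with the inverse limit topology coming from the coordinates, which is a routine verification using the characterization of the compact-open topology by uniform convergence and the standard description of the unique compatible uniformity on an inverse limit of compact Hausdorff spaces.

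For the second assertion, the canonical embedding $C(X_\a,\Aut Y_\a)\hookrightarrow G$ is forced by Definition~\ref{4.5}: take the kernel inclusion $C(X_\a,\Aut Y_\a)\hookrightarrow G_{\a+1}$ and compose with the splittings $s_\beta:G_\beta\rar G_{\beta+1}$ supplied by (a) for $\beta\ge\a+1$, passing to inverse limits at intermediate limit ordinals. Concretely, this realizes $f\in C(X_\a,\Aut Y_\a)$ as the special self-homeomorphism of $Y$ that fixes every coordinate $y_\beta$ with $\beta\ne\a$ and sends $y_\a$ to $f((y_\beta)_{\beta<\a})(y_\a)$. The only step that requires genuine care is the topological matter in (b)--(c); everything else is a formal consequence of Lemma~\ref{l:4.7} and the definitions.
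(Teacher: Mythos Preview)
Your proof is correct and follows the same approach as the paper: the key successor step is exactly the application of Lemma~\ref{l:4.7} to $X_{\a+1}=X_\a\times Y_\a$, restricted to the preimage of $G_\a$, which is precisely what the paper records in the sentence immediately preceding the lemma. You have simply filled in what the paper leaves implicit---the continuity of the system at limit ordinals, the identification of $G$ with the inverse limit (including the topological verification), and the explicit description of the canonical embeddings---none of which the paper spells out.
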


Let $X$ and $Y$ be compact spaces and $Z=X\times Y$. We observed in the
proof of Lemma~\ref{l:4.7} that there is a natural
embedding $\Aut X\rar \Aut Z$, which sends every $f\in \Aut X$ to
$f\times \operatorname{id}_Y$. In particular, if $G=\Aut I^A$,
then for any subset $B\sbs A$ there is a natural embedding $\Aut I^B\rar
G$. Let $G_B$ be the range of this embedding. The subgroup $G_B$ of $G$
consists precisely of self-homeomorphisms of $I^A$ which preserve the
$\a$-coordinate for every $\a\in A\setminus B$.

\begin{lemma}
\label{l:4.10} Let $A$ be a countable set and $G=\Aut I^A$. For
a subset
$B\sbs A$ let $G_B$, as above, be the image of $\Aut I^B$ under the natural
embedding $\Aut I^B\rar G$. Let $B_0\sbs B_1\sbs\dots$ be an increasing
sequence of infinite subsets of $A$ such that $\bigcup B_n=A$. Then for any
compact space $K$ the union of the subgroups $C(K, G_{B_n})$ is dense in
$C(K,G)$.
\end{lemma}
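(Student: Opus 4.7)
The plan is to show that for any $f\in C(K,G)$ and any basic compact-open neighbourhood $U$ of $f$, there is some $n$ and some $g\in C(K,G_{B_n})\cap U$. I would begin by reducing $U$ to a standard form: any basic neighbourhood contains a finite intersection $\bigcap_{j=1}^m [C_j,\langle D_j,V_j\rangle]$, where $C_j\sbs K$ and $D_j\sbs I^A$ are compact, $V_j\sbs I^A$ is open, and $f(k)(D_j)\sbs V_j$ for $k\in C_j$. Since $\{f(k)(x):k\in C_j,x\in D_j\}$ is a compact subset of $V_j$ in the product space $I^A$, a standard finite covering argument produces a common finite set $F\sbs A$ and open sets $W_j\sbs I^F$ with $f(k)(D_j)\sbs \pi_F^{-1}(W_j)\sbs V_j$. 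I then pick $n$ with $F\sbs B_n$, which is possible since $F$ is finite and $\bigcup_n B_n=A$.

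Next I would look for $g$ of the form $g(k)(y,z)=(\bar g(k)(y),z)$ for $(y,z)\in I^{B_n}\ti I^{A\stm B_n}$, where $\bar g:K\to\Aut I^{B_n}$ is continuous; any such $g$ lands in $G_{B_n}$. Because $F\sbs B_n$, the condition $g(k)(D_j)\sbs \pi_F^{-1}(W_j)$ reduces to $\pi_F(\bar g(k)(y))\in W_j$ for all $k\in C_j$ and $y\in\pi_{B_n}(D_j)$. The target behaviour of $\bar g(k)$ on the $F$-coordinates will be guided by the continuous map $(k,y)\mapsto\pi_F(f(k)(y,z_0))$ for a fixed $z_0\in I^{A\stm B_n}$: uniform continuity of $f$ on the compact product $K\ti I^A$ means that, by enlarging $F$ inside $B_n$ if necessary, I can ensure that $\pi_F(f(k)(y,z))$ is within any prescribed slack of $\pi_F(f(k)(y,z_0))$ for every $(k,y,z)$, so that the chosen target is compatible with the original inclusions $\pi_F(f(k)(D_j))\sbs W_j$.

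The crucial step is then to realise these prescribed $F$-coordinate values (up to the slack) by a continuous family of honest self-homeomorphisms of $I^{B_n}$. Since $B_n$ is countably infinite, $I^{B_n}$ is homeomorphic to the Hilbert cube $Q$, and $\Aut Q$ has very strong homogeneity properties; in particular, for each fixed $k$ there exist homeomorphisms of $I^{B_n}$ carrying each compact $\pi_{B_n}(D_j)$ into the open cylinder $\pi_F^{-1}(W_j)$ in line with the guide. I would invoke a parametric version of this fact (parametric $Z$-set unknotting and controlled homogeneity of the Hilbert cube, with compact parameter space $K$) to obtain the required continuous $\bar g:K\to\Aut I^{B_n}$; the resulting $g$ then lies in $C(K,G_{B_n})\cap U$.

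The hard part will be precisely this last parametric realisation. Pointwise existence of the approximating homeomorphism in $\Aut I^{B_n}$ is routine infinite-dimensional topology, but arranging that $\bar g(k)$ depends continuously on $k\in K$ is what forces the countability of $A$ into the argument: without $I^{B_n}$ being the Hilbert cube, parametric homogeneity results of the required strength would not be available.
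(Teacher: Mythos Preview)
Your approach is substantially different from the paper's, and the ``hard part'' you identify is a genuine gap rather than a routine detail. You invoke ``parametric $Z$-set unknotting and controlled homogeneity of the Hilbert cube'' to produce the continuous family $\bar g:K\to\Aut I^{B_n}$, but no standard theorem of infinite-dimensional topology delivers exactly what you need here: a continuous selection $k\mapsto\bar g(k)$ of homeomorphisms of $I^{B_n}$ whose $F$-coordinate projections uniformly approximate the guide $(k,y)\mapsto\pi_F(f(k)(y,z_0))$, subject to several simultaneous compact-into-open constraints indexed by possibly overlapping sets $D_j$ and distinct parameter sets $C_j$. Parametric $Z$-set unknotting concerns extending isotopies of $Z$-sets to ambient isotopies; translating your set-moving requirements into that framework, and then controlling continuity in $k$, is real work that you have not done. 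The approach may ultimately be salvageable, but as written it outsources the entire content of the lemma to an unspecified theorem.

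The paper's proof avoids all of this with an elementary one-line trick that you should know. Since each $B_n$ is countably infinite, choose bijections $f_n:A\to B_n$ such that every $a\in A$ is fixed by $f_n$ for all sufficiently large $n$ (easy to arrange because $\bigcup B_n=A$ and the sequence is increasing). Each $f_n$ permutes the coordinates of $I^A$ and so induces, by conjugation, a topological group isomorphism $r_n:G\to G_{B_n}$, hence an isomorphism $\tilde r_n:C(K,G)\to C(K,G_{B_n})$. Because compact subsets of $G=\Aut I^A$ are equicontinuous and $f_n$ eventually fixes each coordinate, the maps $r_n$ converge to the identity uniformly on compact sets; therefore $\tilde r_n\to\mathrm{id}$ pointwise on $C(K,G)$, and every $\phi\in C(K,G)$ is the limit of $\tilde r_n(\phi)\in C(K,G_{B_n})$. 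No Hilbert-cube machinery, no parametric selection --- just coordinate relabelling.
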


\begin{proof}
For every $n\in \omega$ pick a bijection $f_n: A\rar B_n$ so that every
$x\in A$ is fixed by $f_n$ for all but finitely many $n\in\omega$.
Let $r_n:G\rar G_{B_n}$ be the isomorphism induced by $f_n$,
and let $\tilde r_n:C(K, G)\rar C(K,G_{B_n})$ be the isomorphism induced by
$r_n$. It is easy to
see that the sequence $r_0, r_1,\dots$ of self-maps of $G$
converges to the identity map of $G$ uniformly on compact sets (use the fact
that compact subsets of $G$ are equicontinuous). It follows that the
sequence $\tilde r_0, \tilde r_1, \dots$ of self-maps of $C(K,G)$ converges
to the identity map pointwise, hence the union of the groups
$C(K, G_{B_n})=\tilde r_n(C(K,G))$ is dense in $C(K,G)$.
\end{proof}

For a
Hilbert space $H$ we denote by $U(H)$ the group of all unitary operators in
$H$, equipped with the strong operator topology (= the topology of pointwise
convergence).
As in Section~2, $\Sym E$ is the topological group of all permutations of a
discrete space $E$. Note that $w(U(H))=w(H)$, $w(\Aut I^\tau)=\tau$ and
$w(\Sym E)= \Card(E)$.
\begin{theorem} 
\label{t:4.11}
Let $G$ be one of the following topological groups:
\begin{itemize}
\item[(a)] the group $\Aut I^\tau$ of all self-homeomorphisms of a Hilbert
cube $I^\tau$.
\item[(b)] the group $\Sym E$ of all permutations of a set $E$;
\item[(c)] the unitary group $U(H)$ of a Hilbert space $H$;
\end{itemize}
Then:
\begin{enumerate}
\item the group $G$ has countable functional tightness if and only if its
weight is non-Ulam-measurable;
\item the group $G$ is $g$-sequential if and only if its weight is not a
$g$-sequential cardinal.
\end{enumerate}
\end{theorem}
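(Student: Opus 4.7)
The plan is to combine the pseudometric criterion of Proposition~\ref{p:4.3} for countable functional tightness with the subadditive-measure technique from the proof of Theorem~\ref{t:3.2}, using the iterated semidirect product structure of Lemmas~\ref{l:4.9} and~\ref{l:4.10} to adapt the latter to groups that are not honest direct products.

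For the ``only if'' directions, in each case I would exhibit a topological embedding $2^\tau\hookrightarrow G$ with $\tau=w(G)$: for $\Sym E$, fix a pairing of $E$ and send $\e\in 2^A$ to the involution performing the $\a$-th transposition exactly when $\e_\a=1$; for $U(H)$, pick an orthonormal basis $\{\xi_\a\}$ and send $\e$ to the diagonal unitary with signs $(-1)^{\e_\a}$; for $\Aut I^\tau$, send $\e$ to the product of coordinatewise reflections $t\mapsto 1-t$ on those indices where $\e_\a=1$. When $\tau$ is $g$-sequential, $2^\tau$ fails to be $g$-stable by the argument of Proposition~\ref{p:3.5}, so Proposition~\ref{p:2.2}(b) transfers the failure to $G$ and $G$ cannot be $g$-sequential. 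When $\tau$ is Ulam-measurable, the parallel argument using the $m$-modification (introduced after Proposition~\ref{p:3.15}) shows $2^\tau$ lacks countable functional tightness, and this too passes to $G$.

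For the ``if'' direction of part~(1), apply Proposition~\ref{p:4.3} with the natural family of pseudometrics indexed by a set $A$ of cardinality $\tau$: for case~(b), $d_x(f,g)=0$ iff $f(x)=g(x)$, for $x\in A=E$; for case~(c), $d_\xi(u,v)=\|u\xi-v\xi\|$ for $\xi$ in a fixed orthonormal basis; for case~(a), $d_\a(f,g)=\sup_{z\in I^\tau}|\pi_\a f(z)-\pi_\a g(z)|$ for $\a<\tau$. Given $x,y\in G$ and a finite family $\Phi$ of free ultrafilters on $A$, non-Ulam-measurability of $\tau$ yields a decreasing sequence $B_0\supset B_1\supset\cdots$ in $\bigcap\Phi$ with $\bigcap B_n=\emptyset$, and I would construct $y_n\in G$ by splicing $x$ on indices in $B_n$ with $y$ on indices in $A\setminus B_n$. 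In cases~(c) and~(a) the splicing is transparent (block-diagonal unitaries for~(c); coordinatewise application of Lemma~\ref{l:4.9} for~(a)), while in case~(b) it requires a harmless adjustment on a finite boundary set, absorbed by the fact that free ultrafilters are invariant under finite modifications. The hypothesis of Proposition~\ref{p:4.3} is then satisfied.

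For the ``if'' direction of part~(2) --- the main obstacle --- by Proposition~\ref{p:3.1} it suffices to show that every sequentially continuous seminorm $p$ on $G$ is continuous. Following the blueprint of Theorem~\ref{t:3.2}, for $B\sbs A$ let $G_B\le G$ be the subgroup of elements that are trivial on the indices outside $B$, and set $q(B)=\inf\{\sup_{g\in U}p(g):U\in\sN(G_B)\}$. Sequential continuity of $p$, combined with the fact that every finite set of indices eventually avoids a decreasing $B_n$ with $\bigcap B_n=\emptyset$, yields the countable-continuity condition of Definition~\ref{1.3}(2), and $q$ vanishes on finite $B$ because $G_B$ is then locally compact metrizable and hence $g$-sequential. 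The hardest step is subadditivity $q(B_1\cup B_2)\le q(B_1)+q(B_2)$, which is not immediate because $G_{B_1\cup B_2}$ strictly contains $G_{B_1}G_{B_2}$ in general (a cross-transposition, a basis-mixing rotation, or a coordinate-mixing homeomorphism lies in the difference). I would overcome this by well-ordering $A$ and invoking Lemma~\ref{l:4.9} to present a suitable subgroup of $G$, dense enough to control the oscillation, as an iterated semidirect product $\bigotimes_{\a<\tau}K_\a$ of $g$-sequential slices (using Lemma~\ref{l:4.10} and induction on $\tau$ to handle the slices at higher ordinals), so that the semidirect splitting replaces commutation in the subadditivity estimate. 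Once $q$ is a subadditive measure vanishing on finite sets, non-$g$-sequentiality of $\tau$ forces $q(A)=0$, whence $p$ is continuous.
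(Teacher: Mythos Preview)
Your outline has the right ingredients (Proposition~\ref{p:4.3}, the $2^\tau$ embedding, the subadditive-measure idea of Theorem~\ref{t:3.2}, the iterated semidirect product structure), but there are two genuine gaps.

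\textbf{Part (1), the splicing step.} The ``splice $x$ on $B_n$, $y$ off $B_n$'' does not in general produce an element of $G$. In case~(b), if $y$ moves infinitely many points of $B_n$ out of $B_n$ (e.g.\ $y$ swaps $2k$ with $2k+1$ and $B_n$ is the even integers), the spliced map is not a bijection and the defect is infinite, not a ``finite boundary set''. The same obstruction appears in~(c) (the spliced operator is not unitary unless both $x$ and $y$ respect the splitting) and in~(a) (a general $g\in\Aut I^\tau$ need not be special for the given well-ordering). The paper first uses left-invariance of the $d_\a$ to reduce to $x=e$, and then --- this is the missing idea --- partitions $A$ into countable pieces \emph{adapted to $y$}: $y$-orbits in case~(b), countable $y$-invariant subspaces in case~(c), and (via Shchepin's spectral theorem) countable special subsets in case~(a). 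This reduces the construction of $y_n$ to a metrizable situation where a density argument (finite-support permutations, finite-rank perturbations, Lemma~\ref{l:4.10}) finishes.

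\textbf{Part (2).} Your plan is to run the Theorem~\ref{t:3.2} argument directly on $G$, repairing the failure of $G_{B_1\cup B_2}=G_{B_1}G_{B_2}$ by passing to an iterated semidirect product subgroup ``dense enough to control the oscillation''. This does not work: any such subgroup is proper (for $\Sym E$ and $U(H)$ there is no global iterated-semidirect decomposition, and for $\Aut I^\tau$ the special homeomorphisms depend on a choice of well-ordering and miss most of $G$), and continuity of a sequentially continuous seminorm on a dense subgroup does not transfer to $G$. The paper takes a different route: having established part~(1), countable functional tightness reduces continuity of a sequentially continuous homomorphism $f$ to \emph{$\omega$-continuity}. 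Now one only needs $f$ continuous on each countable subset, and any countable subset of $G$ lies in a subgroup $P$ that \emph{is} a product (cases~(b),(c)) or iterated semidirect product (case~(a)) of $\tau$ metrizable groups; Theorem~\ref{t:3.2} (verbatim, since the subgroups $G_B$ of an iterated semidirect product still satisfy $G_{B\cup C}=G_BG_C$) makes $P$ $g$-sequential, so $f|_P$ is continuous. The reduction to countable sets via part~(1) is the step your proposal is missing.
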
 

\begin{proof} We use the same method as in Theorem~\ref{t:4.4}, based on
Proposition~\ref{p:4.3}.
Let $\tau$ be
the weight of $G$ ($\tau=\Card(E)$ in the case (b) and $\tau=w(H)$ in the
case (c)), and let $A$ be a set of cardinality $\tau$. We construct a family
$D=\{d_\a:\a\in A\}$ of left-invariant pseudometrics on $G$ generating the topology of $G$
as follows. Let $f,g\in G$.

(a) If $G=\Aut I^\tau$, we assume that $A=\tau$. The group $G$ acts on the
right by isometries on the Banach space $B=C(I^\tau)$. 
For
$\a\in A$ let $b_\a\in B$ be the projection to the $\a$th coordinate,
$b_\a: I^A\rar I=[0,1]$. Let $d_\a(f,g)=\norm{b_\a f\obr-b_\a g\obr}$.

(b) If $G=\Sym E$, we may assume that $A=E$. For $x\in E$
let $d_x(f,g)=0$ if $f(x)=g(x)$ and $d_x(f,g)=1$ otherwise.

(c) If $G=U(H)$, let $\{e_\a:\a\in A\}$ be an orthonormal basis for $H$. For
$\a\in A$ let $d_\a(f,g)=\norm{f(e_\a)-g(e_\a)}$.

In all three cases the family $D=\{d_\a:\a\in A\}$ generates the topology
$\sT$ of $G$. Assume that $\tau$ is non-Ulam-measurable. We show that the
family $D$ satisfies the condition of Proposition~\ref{p:4.3}.

Let $f,g\in G$, and let $F$ be a finite set of free ultrafilters on $A$.
We must show that there is a sequence $g_0,g_1,\dots\in G$ converging to
$g$ such that for every $n\in \omega$ the set $\{\a\in A:d_\a(g_n,f)=0\}$
is in $p=\bigcap F$. Since the metrics $d_\a$ are left-invariant, we have
$d_\a(g_n,f)=d_\a(f^{-1}g_n,e)$, where $e$ is the unity of $G$. A sequence
$g_0,g_1,\dots$ converges to $g$ if and only if the sequence $f^{-1}g_0,
f^{-1}g_1,\dots$ converges to $f^{-1}g$. It follows from these remarks that
without loss of generality we may assume that $f=e$.

Consider first the case (a), when $G=\Aut I^\tau$.
Call a subset $B\sbs A$ {\it special\/} if $g$ is $I^B$-special in the sense
of Definition~\ref{4.6}.
In virtue of Shchepin's theorem \cite{Shch}, $A$ is covered by countable
special subsets. Since special subsets of $A$ are preserved by unions,
it follows that the set $A$ can be partitioned into countable infinite sets
$A_\ga$, $\ga<\tau$, so that each $C_\ga=\bigcup_{\beta<\ga} A_\ga$ is
special. Let $Y_\ga=I^{A_\ga}$ and
$X_\ga=I^{C_\ga}=\prod_{\beta<\ga} Y_\ga$.
Then $I^A$ is
identified with the product $Y=\prod_{\ga<\tau} Y_\ga$, and $g$ is a special
self-homeomorphism of $Y$ in the sense of Definition~\ref{4.8}.

Lemma~\ref{l:4.9} implies that $g$ belongs to a subgroup $P$ of $G$
isomorphic to an iterated semidirect product $\bigotimes_{\ga<\tau}
C(X_\ga, \Aut Y_\ga)$, thus $g$ can be identified with an element
$\{h_\ga\}$ of the product $\prod_{\ga<\tau} C(X_\ga, \Aut Y_\ga)$.
Since $\Card(A)$ is non-Ulam-measurable, there is a sequence
$B_0\supset B_1\dots\in\bigcap F$ such that $\bigcap B_n=\emptyset$. We may
also assume that $A_\ga\setminus B_0$ is infinite for every $\ga<\tau$.

It suffices to construct a sequence $g_0,g_1,\dots\in P$ such that
$\lim g_n=g$ and for every $n\in \omega$
the set $\{\a\in A: d_\a(g_n,e)=0\}$ contains $B_n$. The last condition
means that $g_n$ preserves the $\a$th coordinate $b_\a$ for every
$\a\in B_n$.     Given $\ga<\tau$ and $\a\in A_\ga$, let $H_\a\sbs\Aut Y_\a$
be the group of all self-homeomorphisms of $Y_\a$ which preserve the $\a$th
coordinate. Aplying Lemma~\ref{l:4.10} to the increasing sequence $\{A_\ga\setminus
B_n:n\in \omega\}$ of infinite subsets of $A_\ga$, we see
that for every $\ga<\tau$ there is a sequence
$h_{\ga,1},h_{\ga,2,}\dots\in C(X_\ga, \Aut Y_\ga)$ converging to $h_\ga$
such that
$h_{\ga,n}\in C(X_\ga, \bigcap_{\a\in B_n} H_\a)$
for every $n\in \omega$. Let $g_n=\{h_{\ga,n}:\ga<\tau\}$.
Then each $g_n$ can be regarded as an element of
$P=\bigotimes C(X_\ga, \Aut Y_\ga)\sbs G$, and the sequence $g_0, g_1,\dots$
is as required.

The cases (b) and (c) are similar but simpler. As above, let
$B_0\supset B_1\supset\dots\in p$ be a sequence of subsets of
$A$ such that $\bigcap B_n=\emptyset$. It suffices to construct a sequence
$g_0,g_1,\dots\in G$ such that $g=\lim g_n$ and $g_n(x)=x$ for every $x\in
B_n$ in the case (a) (in this case we assume, as above, that $A=E$) or
$g(e_\a)=e_\a$ for every $\a\in B_n$ in the case (b). In the case (a),
when $G=\Sym(E)$, the set $E$ can be partitioned into countable
$g$-invariant subsets $E_i$, $i\in I$. The product $\prod_i \Sym(E_i)$
can be identified with the subgroup $P$ of $G$ consisting of all $h\in G$
which leave each set $E_i$ invariant. Since $g\in P$, we can identify $g$
with an element of the product $\prod_i \Sym(E_i)$ and then construct the
sequence $g_0,g_1,\dots$ coordinate-wise.
The problem is thus
reduced to the case when $E$ is countable. Then $G$ is metrizable, and it
is clear that for any sequence $B_0\supset B_1\supset\dots$ of subsets of
$E$ such that $\bigcap B_n=\emptyset$ the subgroup
$$
S=\{h\in G: \text{
the set of fixed points of $h$ contains some }B_n\}
$$
is dense in $G$, since it contains the dense subgroup of all
permutations with finite support.
Hence there is a sequence $g_0,g_1,\dots\in S$ converging
to $g$, as required. In the case (b), when $G=U(H)$,
the set $A$ can be partitioned into countable subsets $A_i$, $i\in
I$, so that for every $i\in I$ the linear subspace $H_i$ of the
Hilbert space $H$, spanned by
the vectors $e_\a$, $\a\in A_i$, is $g$-invariant. The group $P=\prod_i
U(H_i)$, regarded as a subgroup of $G$, contains $g$. As above, this reduces
the problem to the case of separable $H$. The subgroup of all $h\in G$
which leave fixed all but finitely many vectors of the basis
$\{e_\a:\a\in A\}$ is dense in $G$. As above, this completes the argument.

If $\tau$ is measurable, the group $G$ is not $m$-stable (Section~3) in all
three cases (a), (b), (c),
since $G$ contains a copy of the cube $2^\tau$. Hence the identity map
of $G$ to its $m$-modification is $\omega$-continuous (Proposition~\ref{p:3.15}) but
not continuous. Thus $G$ does not have countable functional
tightness. Similarly,
if $\tau$ is $g$-sequential, then $G$ is not $g$-stable and moreover not
$g$-sequential.

Assume $\tau$ is not $g$-sequential. We must prove that every sequentially
continuous homomorphism $f:G\rar H$ is continuous.
Since we already know that $G$ has countable functional tightness,
it suffices to prove that $f$ is $\omega$-continuous.
We first consider the cases ~(b) and~(c).
We noted above
that every $g\in G$ is contained in a subgroup $P$ of $G$ which is
isomorphic to a product of separable metric groups. It is easy to see that
every countable subset of $G$ also is contained in such a subgroup $P$.
Theorem~\ref{t:3.2} implies that $P$ is $g$-sequential, hence the restriction of $f$
to $P$ is continuous. It follows that $f$ is $\omega$-continuous. In the
case (a), every countable subset of $G$ is contained in a subgroup $P$
which is an iterated semidirect product of metrizable groups of the form
$C(X, \Aut Y)$, where $X$ and $Y$ are compact and $Y$ is metrizable. Thus
it suffices to prove the following generalization of Theorem~\ref{t:3.2}:
if $\tau$ is not $g$-sequential, then any iterated semidirect product
$P=\bigotimes_{\a<\tau} H_\a$ of $g$-sequential groups $H_\a$ is
$g$-sequential. We noted after Definition~\ref{4.5} that
for any set $B$ of ordinals
$<\tau$ the iterated semidirect product $\bigotimes_{\a\in B}H_\a$ can be
regarded as a subgroup of $P$, hence the proof of Theorem~\ref{t:3.2} can be applied
without any changes.
\end{proof}

\section{Acknowledgement} 
The present paper was inspired by ideas of V.\,Pestov
who kindly sent to me 
the unpublished manuscript \cite{MNPS}.
I am indebted to V.\,Pestov for many helpful
discussions, and I regret that he declined my offer to be listed
as a coauthor of the paper.

\end{document}